\providecommand\@dotsep{5}
\def\listtodoname{List of Todos}
\def\listoftodos{\@starttoc{tdo}\listtodoname}
\numberwithin{equation}{section}
\DeclareMathOperator*{\esssup}{ess\,sup}
\DeclareMathOperator*{\essinf}{ess\,inf}
\DeclareMathOperator*{\essliminf}{ess\,lim\,inf}
\DeclareMathOperator*{\esslimsup}{ess\,lim\,sup}
\newcommand{\Om} {\Omega}
\newtheorem{Theorem}{Theorem}[section]
\newtheorem{Lemma}[Theorem]{Lemma}
\newtheorem{Remark}[Theorem]{Remark}
\newtheorem{Definition}[Theorem]{Definition}
\newcommand\R{\mathbb R}
\newcommand\N{\mathbb N}
\begin{document}

\title[Mixed local and nonlocal quasilinear parabolic equations]
{On the regularity theory for mixed local and nonlocal quasilinear parabolic equations}
\author{Prashanta Garain and Juha Kinnunen}

\address[Prashanta Garain ]
{\newline\indent Department of Mathematics
	\newline\indent
Ben-Gurion University of the Negev
	\newline\indent
P.O.B. 653
\newline\indent
Beer Sheva 8410501, Israel
\newline\indent
Email: {\tt pgarain92@gmail.com} }

\address[Juha Kinnunen ]
{\newline\indent Department of Mathematics
\newline\indent
Aalto University
\newline\indent
P.O. Box 11100, FI-00076, Finland
\newline\indent
Email: {\tt juha.k.kinnunen@aalto.fi} }

\begin{abstract}
We consider mixed local and nonlocal quasilinear parabolic equations of $p$-Laplace type and discuss several regularity properties of weak solutions for such equations. More precisely, we establish local boundeness of weak subsolutions, lower semicontinuity of weak supersolutions as well as upper semicontinuity of weak subsolutions. We also discuss the pointwise behavior of the semicontinuous representatives. Our main results are valid for sign changing solutions. Our approach is purely analytic and is based on energy estimates and the De Giorgi theory.
\end{abstract}

\subjclass[2010]{35B65, 35B45, 35K59, 35K92, 35M10, 35R11.}

\keywords{Mixed local and nonlocal quasilinear parabolic equation, energy estimates, local boundedness, lower and upper semicontinuity, pointwise behavior, De Giorgi theory.}

\maketitle

\section{Introduction}
We discuss regularity properties of weak solutions $u:\R^N\times(0,T)\to\R$ for the mixed local and nonlocal quasilinear parabolic equation
\begin{equation}\label{maineqn}
\partial_t u+\mathcal{L}u(x,t)-\operatorname{div}\mathcal{B}(x,t,u,\nabla u)=g(x,t,u)\text{ in } \Om\times(0,T),
\end{equation}
where $T>0$, $\Om\subset\mathbb{R}^N$, with $N\geq 2$, is a bounded domain (i.e. bounded, open and connected set) and $\mathcal{L}$ is an integro-differential operator of the form
\begin{equation}\label{fLap}
\mathcal{L} u(x,t)=\text{P.V.}\,\int_{\mathbb{R}^N}\mathcal{A}\big(x,y,t,u(x,t),u(y,t)\big)K(x,y,t)\,dx\,dy\,dt,
\end{equation}
where $\text{P.V.}$ denotes the principal value and $\mathcal{A}:\mathbb{R}^N\times\mathbb{R}^N\times(0,T)\times\mathbb{R}\times\mathbb{R}\to\mathbb{R}$ is  measurable with respect to $(x,y,t)$ and continuous with respect to $(u(x,t),u(y,t))$ satisfying the growth condition
\begin{equation}\label{L}
\begin{split}
C_1|u(x,t)-u(y,t)|^{p-2}\big(u(x,t)-u(y,t)\big)&\leq\mathcal{A}\big(x,y,t,u(x,t),u(y,t)\big)
\\
&\leq C_2|u(x,t)-u(y,t)|^{p-2}\big(u(x,t)-u(y,t)\big)
\end{split}
\end{equation}
for some positive constants $C_1$ and $C_2$.
We assume that $1<p<\infty$, unless otherwise mentioned. 
The kernel $K$ is symmetric in $x$ and $y$ such that, for some  $0<s<1$ and $\Lambda\geq 1$, we have
\begin{equation}\label{kernel}
\frac{\Lambda^{-1}}{|x-y|^{N+ps}}\leq K(x,y,t)\leq\frac{\Lambda}{|x-y|^{N+ps}},
\end{equation}
for every $x,y\in\R^N$ and $t\in(0,T)$. Here $\mathcal{B}(x,t,u,\zeta):\Om\times(0,T)\times\mathbb{R}^{N+1}\to\mathbb{R}^N$ is a measurable function with respect to $(x,t)$ and continuous with respect to $(u,\zeta)$ such that
\begin{equation}\label{ls}
\mathcal{B}(x,t,u,\zeta)\zeta\geq C_3|\zeta|^p
\quad\text{and}\quad
|\mathcal{B}(x,t,u,\zeta)|\leq C_4|\zeta|^{p-1}
\end{equation}
for almost every $(x,t)\in\Om\times(0,T)$ and for every $(u,\zeta)\in\mathbb{R}^{N+1}$. We assume that the source function $g:\Omega\times(0,T)\times\mathbb{R}\to\mathbb{R}$ is measurable with respect to $(x,t)$  and continuous with respect to $u$ which satisfies
\begin{equation}\label{ghypo}
|g(x,t,u)|\leq\alpha|u|^{l-1}+|h(x,t)|,
\end{equation}
for every $(x,t,u)\in\Om\times(0,T)\times\mathbb{R}$, where $1<l\le\max\{2,p(1+\frac{2}{N})\}$, $\alpha\geq 0$ and $h$ is an integrable function to be made precise later.

There are several interesting equations that satisfy the conditions \eqref{L}, \eqref{kernel} and \eqref{ls}. 
The leading example of \eqref{maineqn} is the mixed local and nonlocal parabolic $p$-Laplace equation
\begin{equation}\label{prot}
\partial_t u+a(-\Delta_p)^s u-b\Delta_p u=0\text{ in }\Omega\times(0,T),
\end{equation}
with $a>0$ and $b>0$. This equation is obtained by choosing
\begin{equation}\label{a}
\mathcal{A}(x,y,t,u(x,t),u(y,t))=|u(x,t)-u(y,t)|^{p-2}(u(x,t)-u(y,t)),
\end{equation}
\begin{equation}\label{b}
K(x,y,t)=a|x-y|^{-N-ps}\quad\text{and}\quad\mathcal{B}(x,t,u,\nabla u)=b|\nabla u|^{p-2}\nabla u.
\end{equation}
This kind of equation appears in image processing, L\'evy processes etc, see Dipierro-Valdinoci \cite{DV21} and the references therein. 
For $\mathcal{A}$ as in \eqref{a} and $a=1$ in \eqref{b}, the operator $\mathcal{L}$ defined by \eqref{fLap} becomes the fractional $p$-Laplace operator $(-\Delta_p)^s$.
For $b=1$ in \eqref{b},
$$
\operatorname{div}\big(\mathcal{B}(x,t,u,\nabla u)\big)=\operatorname{div}(|\nabla u|^{p-2}\nabla u)=\Delta_p u
$$
is the $p$-Laplace operator.
We would like to emphasize that \eqref{maineqn} also extends the following mixed Finsler and fractional $p$-Laplace equation
\begin{equation}\label{mff}
\partial_t u+(-\Delta_p)^s u=\Delta_{\mathcal{F},p}\,u\text{ in } \Om\times(0,T),
\end{equation}
where
\begin{equation}\label{fo}
\Delta_{\mathcal{F},p}\,u=\text{div}\big(\mathcal{F}(\nabla u)^{p-1}\nabla_{\eta}\mathcal{F}(\nabla u)\big),
\end{equation}
is the Finsler $p$-Laplace operator, with $\nabla_{\eta}$ denoting the gradient operator with respect to the $\eta$ variable. Here $\mathcal{F}:\R^N\to[0,\infty)$ is the Finsler-Minkowski norm, 
that is, $\mathcal{F}$ is a nonnegative convex function in $C^1(\R^N\setminus\{0\})$ such that $\mathcal{F}(\eta)=0$ if and only if $\eta=0$, and $\mathcal{F}$ is even and positively homogeneous of degree $1$, so that
\begin{equation}\label{eh}
\mathcal{F}(t\eta)=|t|\mathcal{F}(\eta)
\quad\text{for every $\eta\in\R^N$ and $t\in\mathbb{R}$}.
\end{equation}
Then, it follows that $\mathcal{B}(x,t,u,\nabla u)=\mathcal{F}(\nabla u)^{p-1}\nabla_{\eta}\mathcal{F}(\nabla u)$ satisfies the hypothesis \eqref{ls}, see Xia \cite[Chapter $1$]{Xiathesis}. Various examples of Finsler-Minkowski norm $\mathcal{F}$ can be found in, for example in Belloni-Ferone-Kawohl \cite{BFKzamp}, Xia \cite[p. 22--23]{Xiathesis} and the references therein. A typical example of $\mathcal{F}$ includes the $l^q$-norm defined by
\begin{equation}\label{ex1}
\mathcal{F}(\eta)=\Big(\sum_{i=1}^{N}|\eta_i|^q\Big)^\frac{1}{q},\quad q>1,
\end{equation}
where $\eta=(\eta_1,\eta_2,\ldots,\eta_N)$. When $\mathcal{F}$ is the $l^q$-norm as in \eqref{ex1}, we have
\begin{equation}\label{exf}
\Delta_{\mathcal{F},p}\,u=\sum_{i=1}^{N}\frac{\partial}{\partial x_i}\bigg(\Big(\sum_{k=1}^{N}\Big|\frac{\partial u}{\partial x_k}\Big|^{q}\Big)^\frac{p-q}{q}\Big|\frac{\partial u}{\partial x_i}\Big|^{q-2}\frac{\partial u}{\partial x_i}\bigg).
\end{equation}
For $q=2$ in \eqref{exf}, $\Delta_{\mathcal{F},p}$ becomes the usual $p$-Laplace operator $\Delta_p$. Moreover, for $q=p$ in \eqref{exf}, the operator $\Delta_{\mathcal{F},p}$ reduces to the pseudo $p$-Laplace operator, see Belloni-Kawohl \cite{BKesaim} and therefore, equation \eqref{mff} covers a wide range of mixed local and nonlocal problems and in particular, extends the following mixed pseudo and fractional $p$-Laplace equation
\begin{equation}\label{mpf}
\partial_t u+(-\Delta_p)^s u=\sum_{i=1}^{N}\frac{\partial}{\partial x_i}\Big(\Big|\frac{\partial u}{\partial x_i}\Big|^{p-2}\frac{\partial u}{\partial x_i}\Big)
\text{ in }\Om\times(0,T).
\end{equation}

Before proceeding further, let us discuss some known results. 
In the purely local case $a=0$, the equation \eqref{prot} has been studied thoroughly over the last decades. Local boundedness, Harnack inequalities, H\"older continuity among several other regularity results are discussed in DiBenedetto \cite{Dibe} and DiBenedetto-Gianazza-Vespri \cite{DiBGV}. 
For lower semicontinuity and further properties of weak supersolutions, we refer to Kinnunen-Lindqvist \cite{KLin}, Kuusi \cite{Kuusilsc}, Liao \cite{Liao} and the references therein.
In the purely nonlocal case $b=0$, 
existence, uniqueness and asymptotic behavior of strong solutions of \eqref{prot} were studied by Maz\'{o}n-Rossi-Toledo \cite{MazonRT} and V\'azquez \cite{Vazquez}.
Str\"omqvist \cite{Mlb} obtained a local boundedness result for weak subsolutions with $p>2$.
Brasco-Lindgren-Str\"omqvist \cite{BLM} obtained local H\"older continuity result based on the method of discrete differentiation. By an alternative approach, Ding-Zhang-Zhou \cite{DZZ} established local H\"older continuity along with local boundedness result. Lower semicontinuity result for doubly nonlinear nonlocal problems can be found in Banerjee-Garain-Kinnunen \cite{BGKlsc}.
In the steady state case equation \eqref{prot}, with $a=b=1$, reduces to
\begin{equation}\label{me}
-\Delta_p u+(-\Delta_p)^s u=0.
\end{equation}
Foondun \cite{Fo} proved Harnack and H\"older continuity results for \eqref{me} with $p=2$. For an alternative approach to obtain Harnack inequality for \eqref{me}, see Chen-Kim-Song-Vondra\v{c}ek \cite{CKSV}. Existence and symmetry results together with various other qualitative properties of solutions of \eqref{me} have recently been studied by Biagi-Dipierro-Valdinoci-Vecchi \cite{BSVV2, BSVV1}, Dipierro-Proietti Lippi-Valdinoci \cite{DPV20, DPV21} and Dipierro-Ros-Oton-Serra-Valdinoci \cite{DRXJV20}.
Much less is known in the nonlinear case $p\neq 2$ of \eqref{me}. For this, we refer to Buccheri-da Silva-Miranda \cite{Silvaarxiv}, da Silva-Salort \cite{Silvas}, Biagi-Mugnai-Vecchi \cite{BMV}, Garain-Kinnunen \cite{GK} and Garain-Ukhlov \cite{GU}.
Concerning mixed parabolic equation, Barlow-Bass-Chen-Kassmann \cite{BBCK} obtained Harnack inequality for the linear equation
\begin{equation}\label{mp}
\partial_t u+(-\Delta)^s u-\Delta u=0.
\end{equation}
Chen-Kumagai \cite{CK} also proved Harnack inequality along with local H\"older continuity result. 
Recently, Garain-kinnunen \cite{GKwh} proved a weak Harnack inequality for \eqref{mp} by analytic means. 

The main purpose of this article is to establish local boundedness of weak subsolutions (Theorem \ref{thm1}-Theorem \ref{lbthm2}).
Further, we provide lower and upper semicontinuous representatives of weak supersolutions (Theorem \ref{lscthm1}) and weak subsolutions (Theorem \ref{uscthm1}) of \eqref{maineqn} respectively. We also investigate the pointwise behavior of such representatives (Theorem \ref{lscpt}-\ref{uscpt}).
Moreover, all of our main results are valid for sign changing solutions.
Shang-Fang-Zhang \cite{ShangFZ} recently established the local boundedness result for the equation
\begin{equation}\label{zeqn}
\partial_t u+(-\Delta_p)^s u-\Delta_pu=0\text{ in } \Om\times(0,T),
\end{equation}
but our results cover a wider class of equations.
As far as we are aware, all our main results are new, even for the homogeneous case $g\equiv 0$ in \eqref{maineqn}.
In contrast to the approach from probability and analysis introduced in \cite{BBCK, CK}, we study the problem \eqref{maineqn} with a purely analytic approach. To this end, we adopt the approach from Castro-Kuusi-Palatucci \cite{Kuusilocal}, Ding-Zhang-Zhou \cite{DZZ} to the mixed problem \eqref{maineqn} to obtain the local boundedness result (Theorem \ref{thm1}-Theorem \ref{lbthm2}).
Due to the nonlocality, a tail quantity defined by \eqref{taildef} appears in our local boundedness estimate. This captures both the local and nonlocal behavior of the equation \eqref{maineqn}. We refer to Kassmann \cite{KassmanHarnack}, Castro-Kuusi-Palatucci \cite{Kuusilocal}, Brasco-Lindgren-Schikorra-Str\"omqvist \cite{BrascoLind, BLS,BLM}, Ding-Zhang-Zhou \cite{DZZ} for the purely nonlocal tail and Garain-Kinnunen \cite{GK, GKwh} for the mixed local and nonlocal tail. To provide the semicontinuous representatives (Theorem \ref{lscthm1} and Theorem \ref{uscthm1}) and their pointwise behavior (Theorem \ref{lscpt} and Theorem \ref{uscpt}), we use the theory from Liao \cite{Liao} and adopt the approach from Banerjee-Garain-Kinnunen \cite{BGKlsc,GK}. We obtain energy estimates and De Giorgi type lemmas to prove our main results.
 In Section $2$, we discuss the functional setting for the problem \eqref{maineqn} and state some useful results. Section $3$ is devoted to the statement and proof of the local boundedness result. In Section $4$, we state and prove the semicontinuity results and investigate their pointwise bahavior.

\section{Functional setting and auxiliary results}

In this section, we discuss the functional setting of weak solutions for the problem \eqref{maineqn} and state some useful results. 

\subsection{Notation} The following notation will be used throughout the paper.
We denote the positive and negative parts of $a\in\R$ by $a_+=\max\{a,0\}$ and $a_-=\max\{-a,0\}$ respectively.
The conjugate exponent of $t>1$ is written as $t'=\frac{t}{t-1}$. The Lebesgue measure of a set $S$ is denoted by $|S|$.
The barred integral sign $\fint_{S}f$ denotes the integral average of $f$ over $S$. 
We write $C$ to denote a constant which may vary from line to line or even in the same line. 
The dependencies of the constant $C$ on the parameters $r_1,r_2,\ldots, r_k$ are indicated as $C=C(r_1,r_2,\ldots,r_k)$. 
For $(r,\theta)\in(0,\infty)\times\R$ and $(x_0,t_0)\in\mathbb{R}^{N+1}$, we consider an open ball  $B_r(x_0)$  of radius $r$ with centre at $x_0$ 
and a cylinder $\mathcal{Q}_{r,\theta}(x_0,t_0)=B_{r}(x_0)\times(t_0-\theta r^p,t_0+\theta r^p)$.
If $\theta=1$, we write $\mathcal{Q}_r(x_0,t_0)=\mathcal{Q}_{r,\theta}(x_0,t_0)$. 
Moreover, we write $\Om_T=\Om\times(0,T)$ with $T>0$. 

\subsection{Sobolev spaces}
Let $1<p<\infty$ and assume that $\Om\subset\mathbb{R}^N$ is an open connected set. Recall that the Lebesgue space $L^p(\Om)$ is the set of measurable functions $u:\Om\to\R$ such that $\|u\|_{L^p(\Om)}<\infty$, where
$$
\|u\|_{L^p(\Om)}=\left(\int_{\Om}|u(x)|^p\,dx\right)^\frac{1}{p}.
$$
If $0<p\leq 1$, we denote by $L^p(\Om)$ to be the set of measurable functions $u:\Om\to\R$ such that $\int_{\Om}|u(x)|^p\,dx<\infty$. We say that $u\in L^p_{\mathrm{loc}}(\Om)$ if $u\in L^p(\Om')$ for every $\Om'\Subset\Om$.
For $1<p<\infty$, the Sobolev space $W^{1,p}(\Omega)$ is defined by
$$
W^{1,p}(\Om)=\{u\in L^p(\Om):\|u\|_{W^{1,p}(\Om)}<\infty\},
$$
where 
$$
\|u\|_{W^{1,p}(\Om)}=\left(\int_{\Om}|u(x)|^p\,dx+\int_{\Om}|\nabla u(x)|^p\,dx\right)^\frac{1}{p}.
$$
The Sobolev space $W_0^{1,p}(\Om)$ with zero boundary value is defined by
$$
W_0^{1,p}(\Om)=\{u\in W^{1,p}(\Om):u=0\text{ in }\mathbb{R}^N\setminus\Om\}.
$$

We present some known theory of fractional Sobolev spaces, see Di Nezza-Palatucci-Valdinoci \cite{Hitchhiker'sguide} for more details.
\begin{Definition}
Let $1<p<\infty$ and $0<s<1$. Assume that $\Omega$ is an open and connected set in $\mathbb R^N$. The fractional Sobolev space $W^{s,p}(\Omega)$ is defined by
$$
W^{s,p}(\Omega)=\{u\in L^p(\Omega):\|u\|_{W^{s,p}(\Om)}<\infty\},
$$
where
$$
\|u\|_{W^{s,p}(\Omega)}=\left(\int_{\Omega}|u(x)|^p\,dx+\int_{\Omega}\int_{\Omega}\frac{|u(x)-u(y)|^p}{|x-y|^{N+ps}}\,dx\,dy\right)^\frac{1}{p}.
$$
The fractional Sobolev space with zero boundary value is defined by
$$
W_{0}^{s,p}(\Omega)=\{u\in W^{s,p}(\mathbb{R}^N):u=0\text{ in }\mathbb{R}^N\setminus\Omega\}.
$$
\end{Definition}

For $0<s\leq 1$, the space $W^{s,p}_{\mathrm{loc}}(\Omega)$ is defined by requiring that a function belongs to $W^{s,p}(\Omega')$ for every $\Omega'\Subset\Omega$. 
Here $\Omega'\Subset\Omega$ denotes that $\overline{\Omega'}$ is a compact subset of $\Omega$. 
The Sobolev spaces $W^{s,p}(\Omega)$ and $W_{0}^{s,p}(\Omega)$, with $1<p<\infty$ and $0<s\leq 1$, are reflexive Banach spaces, see \cite{Hitchhiker'sguide, Maly, Evans}.

The next result asserts that the classical Sobolev space is continuously embedded in the fractional Sobolev space, see \cite[Proposition 2.2]{Hitchhiker'sguide}.

\begin{Lemma}\label{locnon}
Let $1<p<\infty$, $0<s<1$ and assume that $\Omega$ is a smooth bounded domain in $\mathbb{R}^N$. There exists a constant $C=C(N,p,s)$ such that
$$
||u||_{W^{s,p}(\Omega)}\leq C||u||_{W^{1,p}(\Omega)}
$$
for every $u\in W^{1,p}(\Omega)$.
\end{Lemma}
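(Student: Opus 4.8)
The plan is to bound only the Gagliardo seminorm part of $\|u\|_{W^{s,p}(\Om)}$, since the term $\int_\Om|u|^p\,dx$ already appears in $\|u\|_{W^{1,p}(\Om)}$; thus it suffices to produce a constant $C=C(N,p,s)$ (depending also on $\Om$ through an extension operator, which is harmless for a fixed smooth domain) with
\[
[u]_{s,p,\Om}^p:=\int_\Om\int_\Om\frac{|u(x)-u(y)|^p}{|x-y|^{N+ps}}\,dx\,dy\le C\|u\|_{W^{1,p}(\Om)}^p.
\]
First I would invoke the Sobolev extension theorem, available because $\partial\Om$ is smooth (see e.g.\ \cite{Evans}): there is a bounded linear operator $E\colon W^{1,p}(\Om)\to W^{1,p}(\R^N)$ with $Eu=u$ a.e.\ in $\Om$ and $\|Eu\|_{W^{1,p}(\R^N)}\le C\|u\|_{W^{1,p}(\Om)}$. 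Writing $v=Eu$, we have $[u]_{s,p,\Om}^p\le\int_\Om\int_\Om\frac{|v(x)-v(y)|^p}{|x-y|^{N+ps}}\,dx\,dy$, and I would split this integral according to whether $|x-y|<1$ or $|x-y|\ge1$.

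For the far contribution ($|x-y|\ge1$) I would use $|v(x)-v(y)|^p\le2^{p-1}(|v(x)|^p+|v(y)|^p)$ together with the elementary bound $\int_{\{|z|\ge1\}}|z|^{-N-ps}\,dz=C(N,p,s)<\infty$, which holds since $N+ps>N$; integrating in the ``outer'' variable then estimates this piece by $C(N,p,s)\|v\|_{L^p(\R^N)}^p\le C\|u\|_{W^{1,p}(\Om)}^p$. For the near contribution ($|x-y|<1$), after the substitution $y=x+h$ I would use $v=Eu\in W^{1,p}(\R^N)$ and enlarge the domain of integration to get
\[
\int_\Om\int_{\Om\cap\{|x-y|<1\}}\frac{|v(x)-v(y)|^p}{|x-y|^{N+ps}}\,dy\,dx\le\int_{\{|h|<1\}}\frac{\|v(\cdot+h)-v(\cdot)\|_{L^p(\R^N)}^p}{|h|^{N+ps}}\,dh,
\]
and then apply the standard difference-quotient inequality $\|v(\cdot+h)-v(\cdot)\|_{L^p(\R^N)}\le|h|\,\|\nabla v\|_{L^p(\R^N)}$ (obtained by writing $v(x+h)-v(x)=\int_0^1\nabla v(x+th)\cdot h\,dt$ for smooth $v$, applying Minkowski's integral inequality, and passing to the limit by density). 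This leaves $\|\nabla v\|_{L^p(\R^N)}^p\int_{\{|h|<1\}}|h|^{p(1-s)-N}\,dh$, and the remaining integral equals a constant $C(N,p,s)$ because $p(1-s)>0$, i.e.\ because $s<1$. Using $\|\nabla v\|_{L^p(\R^N)}\le\|v\|_{W^{1,p}(\R^N)}\le C\|u\|_{W^{1,p}(\Om)}$ controls this piece as well.

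Summing the two contributions and adding the trivial estimate for $\int_\Om|u|^p\,dx$ gives the claim. The regularity of $\Om$ is used only in the extension step (one could instead argue by a direct segment argument when $\Om$ is convex), and the one genuinely quantitative point is the integrability of the kernel near the origin, which is exactly where the hypothesis $s<1$ enters; I expect this to be the crux, the remaining estimates being routine.
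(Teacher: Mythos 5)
Your argument is correct and is exactly the standard proof of this embedding. The paper itself offers no proof, only a pointer to \cite[Proposition 2.2]{Hitchhiker'sguide}, and the proof there proceeds along the same lines: extend $u$ to $W^{1,p}(\R^N)$, split the Gagliardo double integral at $|x-y|=1$, use the $L^p$ triangle inequality and the integrability of $|z|^{-N-ps}$ for the far part, and the difference-quotient estimate $\|v(\cdot+h)-v\|_{L^p}\le|h|\,\|\nabla v\|_{L^p}$ together with the integrability of $|h|^{p(1-s)-N}$ near $0$ for the near part. The one small caveat is the point you already flag parenthetically: the operator norm of the extension $E$ depends on the geometry of $\Omega$, so the constant in the stated $C=C(N,p,s)$ implicitly carries a dependence on the (fixed, smooth, bounded) domain as well; this is a harmless imprecision that the cited reference shares.
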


The following result for the fractional Sobolev spaces with zero boundary value follows from \cite[Lemma 2.1]{Silvaarxiv}.

\begin{Lemma}\label{locnon1}
Let $1<p<\infty$, $0<s<1$ and assume that $\Omega$ is a bounded domain in $\mathbb{R}^N$. There exists a constant $C=C(N,p,s,\Omega)$ such that
\[
\int_{\mathbb{R}^N}\int_{\mathbb{R}^N}\frac{|u(x)-u(y)|^p}{|x-y|^{N+ps}}\,dx\, dy
\leq C\int_{\Omega}|\nabla u(x)|^p\,dx
\]
for every $u\in W_0^{1,p}(\Omega)$.
Here we consider the zero extension of $u$ to the complement of $\Omega$.
\end{Lemma}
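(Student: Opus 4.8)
The plan is to localize the double integral to a large ball, where the embedding of Lemma \ref{locnon} is available, and to control the remaining ``tail'' contribution by hand using that the zero extension of $u$ vanishes far away. Since $\Om$ is bounded, I would first fix $R>0$ (depending only on $\Om$) so that $\overline{\Om}\subset B_R$, and regard $u$ as extended by zero outside $\Om$; in particular $u\equiv 0$ on $\R^N\setminus B_{2R}$. Splitting $\R^N\times\R^N$ into the four pieces $B_{2R}\times B_{2R}$, $B_{2R}\times(\R^N\setminus B_{2R})$, $(\R^N\setminus B_{2R})\times B_{2R}$ and $(\R^N\setminus B_{2R})\times(\R^N\setminus B_{2R})$, the last piece contributes nothing because $u$ vanishes there, and the two cross pieces are equal by the symmetry of the integrand. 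Hence
\[
\int_{\R^N}\int_{\R^N}\frac{|u(x)-u(y)|^p}{|x-y|^{N+ps}}\,dx\,dy
=\int_{B_{2R}}\int_{B_{2R}}\frac{|u(x)-u(y)|^p}{|x-y|^{N+ps}}\,dx\,dy
+2\int_{B_{2R}}\int_{\R^N\setminus B_{2R}}\frac{|u(x)|^p}{|x-y|^{N+ps}}\,dy\,dx .
\]

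For the first term I would apply Lemma \ref{locnon} on the smooth bounded domain $B_{2R}$ to get a bound by $C\big(\|u\|_{L^p(B_{2R})}^p+\|\nabla u\|_{L^p(B_{2R})}^p\big)=C\big(\|u\|_{L^p(\Om)}^p+\|\nabla u\|_{L^p(\Om)}^p\big)$, using that $u$ is supported in $\Om$, and then invoke the Poincar\'e inequality for $W_0^{1,p}(\Om)$ on the bounded domain $\Om$ to absorb $\|u\|_{L^p(\Om)}^p$ into $C\|\nabla u\|_{L^p(\Om)}^p$. For the second term, note that $u(x)=0$ unless $x\in\Om\subset B_R$, and for such $x$ and for $|y|\ge 2R$ one has $|x-y|\ge |y|-R\ge\frac{1}{2}|y|$, so
\[
\int_{\R^N\setminus B_{2R}}\frac{dy}{|x-y|^{N+ps}}\le 2^{N+ps}\int_{\R^N\setminus B_{2R}}\frac{dy}{|y|^{N+ps}}=C(N,p,s)\,R^{-ps},
\]
uniformly in $x$; hence the tail term is at most $C(N,p,s)R^{-ps}\|u\|_{L^p(\Om)}^p$, which by Poincar\'e's inequality is again controlled by $C(N,p,s,\Om)\|\nabla u\|_{L^p(\Om)}^p$. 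Adding the two bounds finishes the argument, with $C$ depending on $N$, $p$, $s$ and $\Om$ (through $R$ and the Poincar\'e constant).

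I do not expect a genuine obstacle here: the argument is routine once one realizes that Lemma \ref{locnon} requires a smooth domain, which is precisely why one passes to the auxiliary ball $B_{2R}$ rather than working on $\Om$ directly, and the only convergence issue, namely finiteness of the tail integral, is guaranteed by $ps>0$. If one preferred not to use Lemma \ref{locnon} at all, the same result follows by splitting the integral according to $\{|x-y|<1\}$ versus $\{|x-y|\ge 1\}$: on the near-diagonal set one writes $|u(x)-u(y)|\le|x-y|\int_0^1|\nabla u(x+t(y-x))|\,dt$, applies Jensen's inequality and Fubini's theorem together with $\int_{|z|<1}|z|^{p-N-ps}\,dz<\infty$, while on the complementary set one uses $|u(x)-u(y)|^p\le 2^{p-1}(|u(x)|^p+|u(y)|^p)$ and $\int_{|z|\ge 1}|z|^{-N-ps}\,dz<\infty$, closing once more with the Poincar\'e inequality.
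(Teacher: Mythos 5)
Your argument is correct, but note that the paper does not actually present a proof of Lemma~\ref{locnon1}: it simply records that the statement follows from \cite[Lemma~2.1]{Silvaarxiv}. So rather than ``taking a different route,'' you have supplied a self-contained proof where the paper supplies none. The decomposition you use is the natural one: $u$, extended by zero, lies in $W^{1,p}(\mathbb{R}^N)$, so the integral over $B_{2R}\times B_{2R}$ is handled by the embedding $W^{1,p}(B_{2R})\hookrightarrow W^{s,p}(B_{2R})$ of Lemma~\ref{locnon} (which requires a smooth bounded domain --- hence the passage to a ball rather than working on $\Om$ directly, as you correctly flag), the part on $(\mathbb{R}^N\setminus B_{2R})\times(\mathbb{R}^N\setminus B_{2R})$ vanishes identically, and the cross term is bounded by $C(N,p,s)R^{-ps}\|u\|_{L^p(\Om)}^p$ from the elementary estimate $|x-y|\ge |y|/2$ valid for $x\in\Om\subset B_R$ and $|y|\ge 2R$; the Poincar\'e inequality on $W_0^{1,p}(\Om)$ then removes the residual $L^p$ norms, yielding the gradient-only bound. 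Your alternative sketch, splitting at $|x-y|=1$ and treating the near-diagonal set via the fundamental theorem of calculus, Jensen's inequality, and Fubini, is likewise sound and arguably cleaner: it avoids Lemma~\ref{locnon} (hence the extension theorem hidden behind \cite[Proposition~2.2]{Hitchhiker'sguide}) and needs no auxiliary ball at all. Both arguments, as they must, ultimately rest on Poincar\'e to produce a right-hand side depending only on $\|\nabla u\|_{L^p(\Om)}$.
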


Next we introduce a tail space and a mixed parabolic tail that will be used throughout the paper. We refer to Brasco-Lindgren-Schikorra \cite{BLS} for tail space.
\begin{Definition}
Let $m_1>0$ and $m_2>0$. We define a tail space $L_{m_1}^{m_2}(\mathbb{R}^N)$ by
\begin{equation}\label{tailspdef}
L_{m_1}^{m_2}(\mathbb{R}^N)=\bigg\{u\in L^{m_2}_{\mathrm{loc}}(\mathbb{R}^N):\int_{\mathbb{R}^N}\frac{|u(y)|^{m_2}}{1+|y|^{N+m_1}}\,dy<\infty\bigg\},
\end{equation}
endowed with the norm
$$
\|u\|_{L_{m_1}^{m_2}(\mathbb{R}^N)}=\left(\int_{\mathbb{R}^N}\frac{|u(y)|^{m_2}}{1+|y|^{N+m_1}}\,dy\right)^\frac{1}{m_2}.
$$
\end{Definition}

\begin{Definition}
Let $(x_0,t_0)\in\mathbb{R}^N\times(0,T)$ and the interval $I=[t_0-T_1,t_0+T_2]\subset(0,T)$. 
We define the mixed parabolic tail by
\begin{equation}\label{taildef}
\begin{split}
\mathrm{Tail}_{\infty}(u;x_0,r,I)
&=\bigg(r^p\esssup_{t\in I}\int_{\mathbb{R}^N\setminus B_r(x_0)}\frac{|u(y,t)|^{p-1}}{|y-x_0|^{N+ps}}\,dy\bigg)^\frac{1}{p-1}.
\end{split}
\end{equation}
\end{Definition}
From the definitions above, it is clear that for any $v\in L^\infty(I,L^{p-1}_{ps}(\mathbb{R}^N))$, the parabolic tail $\mathrm{Tail}_{\infty}(v;x_0,r,I)$ is well defined.
For short, we write 
\[
\mathcal{A}\big(x,y,t,u(x,t),u(y,t)\big)=\mathcal{A}(u(x,y,t)).
\]
Now we are ready to define the notion of weak solutions for the problem \eqref{maineqn}.
\begin{Definition}\label{wksoldef}
Let $g$ satisfies the hypothesis \eqref{ghypo} for $1<l\leq\max\{2,p(1+\frac{2}{N})\}$ and $h\in L^{l'}_{\mathrm{loc}}(\Om_T)$. We say that $u\in L^p_{\mathrm{loc}}\big(0,T;W^{1,p}_{\mathrm{loc}}(\Om)\big)\cap C_{\mathrm{loc}}\big(0,T;L^2_{\mathrm{loc}}(\Om)\big)\cap L^\infty_{\mathrm{loc}}\big(0,T;L^{p-1}_{ps}(\mathbb{R}^N)\big)$ is a weak subsolution (or supersolution) of \eqref{maineqn} if for every $\Om'\times(t_1,t_2)\Subset\Om_T$ and for every nonnegative $\phi\in L^p_{\mathrm{loc}}\big(0,T;W^{1,p}_{0}(\Om')\big)\cap W^{1,2}_{\mathrm{loc}}\big(0,T;L^2(\Om')\big)$, we have
\begin{equation}\label{wksol}
\begin{split}
&\int_{\Om'}u(x,t_2)\phi(x,t_2)\,dx-\int_{\Om'}u(x,t_1)\phi(x,t_1)\,dx-\int_{t_1}^{t_2}\int_{\Om'}u(x,t)\partial_t\phi(x,t)\,dx\,dt\\
&+\int_{t_1}^{t_2}\int_{\mathbb{R}^N}\int_{\mathbb{R}^N}\mathcal{A}\big(u(x,y,t)\big)\big(\phi(x,t)-\phi(y,t)\big)K(x,y,t)\,dx\,dy\,dt\\
&+\int_{t_1}^{t_2}\int_{\Om'}\mathcal{B}(x,t,u,\nabla u)\nabla\phi(x,t)\,dx\,dt\leq (\text{ or }\geq) \int_{t_1}^{t_2}\int_{\Om'}g(x,t,u)\phi(x,t)\,dx\,dt.
\end{split}
\end{equation}
Moreover, we say that $u$ is a weak solution of \eqref{maineqn} if the equality holds in \eqref{wksol} for every $\phi\in L^p_{\mathrm{loc}}\big(0,T;W^{1,p}_{0}(\Om')\big)\cap W^{1,2}_{\mathrm{loc}}\big(0,T;L^2(\Om')\big)$ without a sign restriction.
\end{Definition}
\begin{Remark}\label{wksolrmk}
Since $u\in L^p_{\mathrm{loc}}\big(0,T;W^{1,p}_{\mathrm{loc}}(\Om)\big)\cap C_{\mathrm{loc}}\big(0,T;L^2_{\mathrm{loc}}(\Om)\big)$, by Lemma \ref{Sobo} (a) below, we have $u\in L^l_{\mathrm{loc}}(0,T;L^l_{\mathrm{loc}}(\Om))$ for $1<l\leq\max\{2,p(1+\frac{2}{N})\}$. Therefore, the integral involving $g$ in the right-hand side of \eqref{wksol} is finite. 
Noting this fact along with Lemma \ref{locnon} and Lemma \ref{locnon1} imply that Definition \ref{wksoldef} well stated.
\end{Remark}
\subsection{Auxiliary results}
The following result follows from \cite[Proposition $3.1$ and Proposition $3.2$]{Dibe}.
\begin{Lemma}\label{Sobo}
Let $p,m\in[1,\infty)$ and $q=p(1+\frac{m}{N})$.
Assume that $\Om$ is a bounded smooth domain in $\mathbb{R}^N$. 
\begin{enumerate}
\item[(a)] If $u\in L^p\big(0,T;W^{1,p}(\Om)\big)\cap L^\infty\big(0,T;L^m(\Om)\big)$, then $u\in L^q\big(0,T;L^q(\Om)\big)$.
\item[(b)] Moreover, if $u\in L^p\big(0,T;W^{1,p}_{0}(\Om)\big)\cap L^\infty\big(0,T;L^m(\Om)\big)$, then there exists a constant $C=C(p,m,N)>0$ such that
\begin{equation}\label{Soboine}
\int_{0}^{T}\int_{\Om}|u(x,t)|^q\,dx\,dt\leq C\bigg(\int_{0}^{T}\int_{\Om}|\nabla u(x,t)|^p\,dx\,dt\bigg)\bigg(\esssup_{0<t<T}\int_{\Om}|u(x,t)|^m\,dx\bigg)^\frac{p}{N}.
\end{equation}
\end{enumerate}
\end{Lemma}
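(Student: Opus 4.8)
The plan is to deduce both statements from a fixed--time Gagliardo--Nirenberg--Sobolev interpolation inequality, applied for a.e.\ $t\in(0,T)$ and then integrated in time; the point is that the exponent $q=p\big(1+\tfrac{m}{N}\big)$ is exactly the one for which, after raising to the power $q$, the power of the gradient term becomes $p$ while the power of the $L^m$ term becomes $\tfrac{pm}{N}$, so that \eqref{Soboine} falls out.

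For part (b) I would proceed as follows. Recall the Gagliardo--Nirenberg inequality on $\R^N$: there is a constant $C=C(N,p,m)$ such that
\begin{equation*}
\|v\|_{L^q(\R^N)}\le C\,\|\nabla v\|_{L^p(\R^N)}^{\theta}\,\|v\|_{L^m(\R^N)}^{1-\theta},\qquad \theta=\frac{p}{q},
\end{equation*}
for every $v\in W^{1,p}(\R^N)\cap L^m(\R^N)$; the value $\theta=p/q$ is the one forced by dimensional balance, which is equivalent to the identity $q=p(1+\tfrac{m}{N})$, and this scale invariance is why the constant is free of any domain. Given $u$ as in (b), for a.e.\ $t\in(0,T)$ the slice $u(\cdot,t)$ lies in $W_0^{1,p}(\Om)$, so its extension by zero belongs to $W^{1,p}(\R^N)$ with unchanged $L^p$, $L^m$, and gradient norms. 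Applying the inequality to this extension and raising to the power $q$, using $\theta q=p$ and $q-p=\tfrac{pm}{N}$, gives
\begin{equation*}
\int_{\Om}|u(x,t)|^q\,dx\le C\,\|\nabla u(\cdot,t)\|_{L^p(\Om)}^{p}\,\Big(\int_{\Om}|u(x,t)|^m\,dx\Big)^{p/N}
\end{equation*}
for a.e.\ $t$. Bounding the last factor by $\big(\esssup_{0<\tau<T}\int_{\Om}|u(x,\tau)|^m\,dx\big)^{p/N}$ and integrating over $t\in(0,T)$ produces exactly \eqref{Soboine}.

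For part (a) the only change is that $u(\cdot,t)$ need not vanish on $\partial\Om$, so the zero extension is unavailable; I would instead invoke the Gagliardo--Nirenberg inequality on the bounded smooth domain $\Om$ itself (which follows from the $\R^N$ case applied to a bounded Sobolev extension of $v$), carrying one extra lower--order term:
\begin{equation*}
\int_{\Om}|v(x)|^q\,dx\le C\Big(\|\nabla v\|_{L^p(\Om)}^{p}\,\|v\|_{L^m(\Om)}^{q-p}+\|v\|_{L^m(\Om)}^{q}\Big),\qquad C=C(N,p,m,\Om).
\end{equation*}
Applying this to $u(\cdot,t)$ and integrating in $t$, the first term is controlled as in (b) by $\big(\esssup_{0<t<T}\int_\Om|u|^m\big)^{p/N}\iint_{\Om_T}|\nabla u|^p$, which is finite since $u\in L^p(0,T;W^{1,p}(\Om))\cap L^\infty(0,T;L^m(\Om))$, and the second by $T\big(\esssup_{0<t<T}\int_\Om|u|^m\big)^{q/m}<\infty$; hence $u\in L^q\big(0,T;L^q(\Om)\big)$.

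The step I expect to require the most care is the fixed--time interpolation inequality itself with the asserted constant dependence over the whole range $1\le p<\infty$: for $p<N$ it follows by composing the Sobolev embedding $W_0^{1,p}(\Om)\hookrightarrow L^{p^*}(\Om)$ with a Hölder interpolation that places $L^q$ between $L^{p^*}$ and $L^m$ with the exponents above, and for $p\ge N$ by using instead that $W_0^{1,p}(\Om)$ embeds into every $L^r(\Om)$ with $r<\infty$, the scale invariance built into $q=p(1+\tfrac{m}{N})$ being what keeps the constant independent of $\Om$ in (b). This is precisely the content of \cite[Propositions 3.1 and 3.2]{Dibe}, which we invoke.
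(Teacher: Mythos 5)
Your proposal is correct and follows essentially the same route as the paper, which simply invokes \cite[Propositions 3.1 and 3.2]{Dibe}: the fixed-time Gagliardo--Nirenberg inequality with $\theta = p/q$ (so $\theta q = p$ and $(1-\theta)q = pm/N$), the zero extension for (b) versus the domain form with a lower-order term for (a), and the integration in $t$ are exactly DiBenedetto's argument for those propositions. Your exponent checks $q=p(1+\tfrac{m}{N}) \Leftrightarrow \theta = p/q$ and $q-p=\tfrac{pm}{N}$ are correct.
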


For $a,k\in\R$, we define the functions $\zeta_+$ and $\zeta_-$ by
\begin{equation}\label{xi}
\zeta_+(a,k)=\int_{k}^{a}(\eta-k)_{+}\,d\eta
\quad\text{and}\quad
\zeta_-(a,k)=-\int_{k}^{a}(\eta-k)_{-}\,d\eta.
\end{equation}
Note that $\zeta_+\geq 0$, $\zeta_-\geq 0$. The following result follows from \cite[Lemma 2.2]{Verenacontinuity}. 
\begin{Lemma}\label{Auxfnlemma}
For every $a,k\in\mathbb{R}$, there exists a constant $\lambda>0$ such that
$$
\frac{1}{\lambda}(a-k)_{+}^2\leq \zeta_+(a,k)\leq\lambda(a-k)_{+}^2
\quad\text{and}\quad
\frac{1}{\lambda}(a-k)_{-}^2\leq \zeta_-(a,k)\leq\lambda(a-k)_{-}^2.
$$
\end{Lemma}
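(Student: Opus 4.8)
The plan is to evaluate the integrals defining $\zeta_+$ and $\zeta_-$ explicitly. In fact I expect the exact identity $\zeta_\pm(a,k)=\tfrac12(a-k)_\pm^2$ to hold for all $a,k\in\R$, so the asserted two-sided bound will follow immediately with the absolute constant $\lambda=2$ (any $\lambda\ge 2$ works).

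First I would treat $\zeta_+$. Fix $a,k\in\R$ and distinguish two cases. If $a\ge k$, then every $\eta$ between $k$ and $a$ satisfies $\eta\ge k$, hence $(\eta-k)_+=\eta-k$, and therefore $\zeta_+(a,k)=\int_k^a(\eta-k)\,d\eta=\tfrac12(a-k)^2=\tfrac12(a-k)_+^2$. If instead $a<k$, then for every $\eta\in[a,k]$ we have $\eta\le k$, so $(\eta-k)_+=0$ on that interval; since $\int_k^a=-\int_a^k$, this gives $\zeta_+(a,k)=0$, which coincides with $\tfrac12(a-k)_+^2$ because $(a-k)_+=0$.

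The computation for $\zeta_-$ is entirely symmetric. Writing $(\eta-k)_-=(k-\eta)_+$, if $a\le k$ then $(\eta-k)_-=k-\eta$ for $\eta$ between $a$ and $k$, so $\zeta_-(a,k)=-\int_k^a(k-\eta)_+\,d\eta=\int_a^k(k-\eta)\,d\eta=\tfrac12(k-a)^2=\tfrac12(a-k)_-^2$; and if $a>k$ the integrand vanishes on $[k,a]$, giving $\zeta_-(a,k)=0=\tfrac12(a-k)_-^2$. Combining the two identities yields the claim.

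There is essentially no serious obstacle here: the only points requiring a little care are bookkeeping the orientation of the integral (the lower limit $k$ may exceed the upper limit $a$, forcing a sign change) and verifying that the truncation $(\,\cdot\,)_\pm$ makes the integrand vanish precisely on the ``wrong-sign'' interval. Once the identity $\zeta_\pm(a,k)=\tfrac12(a-k)_\pm^2$ is established, both displayed inequalities hold with $\lambda=2$, uniformly in $a$ and $k$.
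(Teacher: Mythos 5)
Your computation is correct, and the exact identity $\zeta_{\pm}(a,k)=\tfrac{1}{2}(a-k)_{\pm}^{2}$ does hold for the definition \eqref{xi} in this paper; the case analysis on the sign of $a-k$ and the orientation of the integral are handled properly. The paper does not prove the lemma directly: it cites \cite[Lemma~2.2]{Verenacontinuity}, where the analogous functions arise in a doubly nonlinear setting (the integrand carries an extra nonlinearity and the two-sided bound there has a genuine, non-explicit constant). In the present paper's special case that nonlinearity is absent, so your direct evaluation collapses the cited inequality to an identity and furnishes the sharp admissible constant $\lambda=2$. Your route is therefore more elementary and self-contained than the paper's, and it actually proves a slightly stronger statement; the paper's citation buys brevity and consistency with its reference's framework but obscures that the estimate is trivial here. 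One tiny presentational point: the lemma's phrasing ``for every $a,k$ there exists $\lambda$'' might suggest a pointwise constant, but you correctly exhibit a $\lambda$ uniform in $(a,k)$, which is what the later applications require.
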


The following iteration lemma is from \cite[Lemma $4.3$]{HS15}.

\begin{Lemma}\label{ite}
Let $\{Y_j\}_{j=0}^{\infty}$ be a sequence of positive real numbers such that 
$$
Y_{j+1}\leq K b^j(Y_j^{1+\delta_1}+Y_j^{1+\delta_2}),\quad j\in\mathbb{N}\cup\{0\},
$$
where $K>0$, $b>1$ and $\delta_2\geq\delta_1>0$. If 
$$
Y_0\leq\min\Big\{1,(2K)^{-\frac{1}{\delta_1}}b^{-\frac{1}{\delta_1^{2}}}\Big\}
\quad\text{or}\quad
 Y_0\leq\min\Big\{(2K)^{-\frac{1}{\delta_1}}b^{-\frac{1}{\delta_1^{2}}},(2K)^{-\frac{1}{\delta_2}}b^{-\frac{1}{\delta_1\delta_2}-\frac{\delta_2-\delta_1}{\delta_2^{2}}}\Big\},
$$
then $Y_j\leq 1$ for some $j\in\mathbb{N}\cup\{0\}$. Moreover, 
$$
Y_j\leq \min\Big\{1,(2K)^{-\frac{1}{\delta_1}}b^{-\frac{1}{\delta_1^{2}}}b^{-\frac{j}{\delta_1}}\Big\}\quad\text{for every}\quad j\geq j_0,
$$
where $j_0$ is the smallest $j\in\mathbb{N}\cup\{0\}$ such that $Y_j\leq 1$. In particular, we have $\lim_{j\to\infty}Y_j=0$.
\end{Lemma}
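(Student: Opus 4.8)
The plan is to reduce the two–term recursion to the classical fast geometric convergence scheme. The starting observation is the elementary dichotomy: if $0<Y_j\le 1$ then $Y_j^{1+\delta_2}\le Y_j^{1+\delta_1}$, whereas if $Y_j>1$ then $Y_j^{1+\delta_1}\le Y_j^{1+\delta_2}$. Hence in all cases
\[
Y_{j+1}\le 2Kb^{j}Y_j^{1+\delta_1}\ \text{ whenever }Y_j\le 1,\qquad Y_{j+1}\le 2Kb^{j}Y_j^{1+\delta_2}\ \text{ whenever }Y_j>1 .
\]
This is the only place where the hypothesis $\delta_2\ge\delta_1$ is genuinely used, and it converts the problem into two one–parameter iterations glued along the level $\{Y_j=1\}$.

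Next I would fix the critical size $A:=(2K)^{-1/\delta_1}b^{-1/\delta_1^{2}}$, which is exactly the value for which the pure $\delta_1$–recursion is self–improving: a one–line computation gives $2Kb^{j}\,(A\,b^{-j/\delta_1})^{1+\delta_1}=A\,b^{-(j+1)/\delta_1}$. Under the first assumption $Y_0\le\min\{1,A\}$ I would prove by induction on $j$ that $Y_j\le\min\{1,A\,b^{-j/\delta_1}\}$. The base case is the hypothesis. In the inductive step one always lands in the branch $Y_j\le 1$, so $Y_{j+1}\le 2Kb^{j}Y_j^{1+\delta_1}$; inserting the induction hypothesis and using the self–improving identity gives $Y_{j+1}\le A\,b^{-(j+1)/\delta_1}$, and a short separate check — split according to whether $A\,b^{-(j+1)/\delta_1}\le 1$ or not, and in the latter case estimate $Y_{j+1}\le 2Kb^{j}$ directly, noting that $2K$ is then forced to be very small — shows $Y_{j+1}\le 1$ as well. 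This already yields $Y_j\le 1$ for every $j$ (so $j_0=0$), the stated bound, and $Y_j\to 0$.

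For the second assumption, which does not force $Y_0\le 1$, I would run the same induction with the weaker hypothesis $Y_j\le A\,b^{-j/\delta_1}$ (no minimum with $1$), now genuinely using the dichotomy: on indices with $Y_j\le 1$ the $\delta_1$–branch and the self–improving identity close the step exactly as before, while on the finitely many initial indices with $Y_j>1$ one uses the $\delta_2$–branch, $Y_{j+1}\le 2Kb^{j}Y_j^{1+\delta_2}\le 2Kb^{j}(A\,b^{-j/\delta_1})^{1+\delta_2}$, and checks that this is $\le A\,b^{-(j+1)/\delta_1}$. The relevant exponent of $b$ one must dominate is $-\tfrac1{\delta_1}+j\,\tfrac{\delta_2-\delta_1}{\delta_1}$, which is $\ge-\tfrac1{\delta_1}$ because $\delta_2\ge\delta_1$, so it suffices that $2KA^{\delta_2}\le b^{-1/\delta_1}$; the second smallness constant on $Y_0$ is calibrated precisely so that this holds (together with the corresponding requirement at $j=0$, which accounts for the extra factor $b^{-(\delta_2-\delta_1)/\delta_2^{2}}$ appearing there). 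Once $Y_j\le A\,b^{-j/\delta_1}$ is known for all $j$, the index $j_0$ with $Y_{j_0}\le 1$ exists because $A\,b^{-j/\delta_1}\to 0$, and for $j\ge j_0$ one restarts the induction of the first case from $j_0$ (which is a self–contained implication, not requiring the index to start at $0$), obtaining $Y_j\le\min\{1,(2K)^{-1/\delta_1}b^{-1/\delta_1^{2}}b^{-j/\delta_1}\}$ and hence $\lim_{j\to\infty}Y_j=0$.

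The only genuinely delicate point is the bookkeeping of exponents in the inductive step: one must verify that the geometric factor $b^{j}$ is always beaten. For the $\delta_1$–branch this is built into the definition of $A$; for the $\delta_2$–branch it rests on the sign of $\delta_2-\delta_1$ together with the precise form of the second smallness constant. Everything else — the base case, the passage from the $A\,b^{-j/\delta_1}$ bound to the $\min\{1,\cdot\}$ bound, and the two corollaries ($Y_j\le 1$ for some $j$ and $\lim Y_j=0$) — is routine.
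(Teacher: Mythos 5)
Your treatment of the first alternative is correct, and the dichotomy you begin with is a clean way to decouple the two exponents. But the second alternative contains a genuine gap: the inductive step on indices with $Y_j>1$ does not close. You reduce that step to the inequality $2KA^{\delta_2}\le b^{-1/\delta_1}$ and assert that the smallness of $Y_0$ is ``calibrated precisely so that this holds.'' It cannot be: that inequality involves only $K$, $b$, $\delta_1$, $\delta_2$ and makes no reference to $Y_0$. Worse, writing $A^{\delta_2}=(2K)^{-\delta_2/\delta_1}b^{-\delta_2/\delta_1^2}$ shows that for $\delta_2>\delta_1$ the inequality $2KA^{\delta_2}\le b^{-1/\delta_1}$ is \emph{equivalent} to $2K\ge b^{-1/\delta_1}$, while whenever the second alternative says anything new (i.e.\ permits $Y_0>1$) one must have $A>1$, which forces $2K<b^{-1/\delta_1}$. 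So the sufficient condition you invoke fails exactly in the regime you need it. A concrete instance: $\delta_1=1$, $\delta_2=2$, $b=2$, $2K=1/50$, so $A=25$, $A'=\sqrt{50}\,2^{-3/4}\approx 4.2$, and $Y_0=4$; then $2K(Ab^{0})^{1+\delta_2}=2KA^{3}=312.5$, which is nowhere near $Ab^{-1/\delta_1}=12.5$.

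The source of the trouble is that on the $\delta_2$-branch you may not insert the loose bound $Y_j\le Ab^{-j/\delta_1}$; you must insert the tighter bound $Y_j\le A'b^{-j/\delta_2}$, which \emph{is} self-improving there — one checks that $2K(A')^{\delta_2}\le b^{-1/\delta_2}$ reduces to $\delta_1\le\delta_2$. Carrying $Y_j\le A'b^{-j/\delta_2}$ forward while $Y_j>1$, one finds that $Y_j>1$ together with this bound forces $j<\delta_2\log_b A'$, and a short computation shows that $1+\delta_2\log_b A'=\tfrac{\delta_1\delta_2}{\delta_2-\delta_1}\log_b(A/A')$, which is precisely the threshold up to which $A'b^{-j/\delta_2}\le Ab^{-j/\delta_1}$. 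This is what lets one convert the $A'$-decay into the claimed $A$-decay at the first index $j_0$ with $Y_{j_0}\le 1$; from there your shifted first-case induction does finish the argument. So the overall strategy is salvageable, but the step as written — and in particular the appeal to $Y_0$ to justify a $Y_0$-independent inequality — is not correct.
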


\section{Local boundedness result}
Our first main result is the following local boundedness estimate for weak subsolutions. We prove the result by applying Lemma \ref{eng1app2} below.
\begin{Theorem}\label{thm1}(\textbf{Local boundedness})
Let $\frac{2N}{N+2}<p<\infty,\,0<s<1$ and $u\in L^p_{\mathrm{loc}}\big(0,T;W^{1,p}_{\mathrm{loc}}(\Om)\big)\\\cap C_{\mathrm{loc}}\big(0,T;L^2_{\mathrm{loc}}(\Om)\big)\cap L^\infty_{\mathrm{loc}}\big(0,T;L^{p-1}_{ps}(\mathbb{R}^N)\big)$ be a weak subsolution of \eqref{maineqn} in $\Om_T$. Suppose $(x_0,t_0)\in\Om_T$ and $r\in(0,1)$ such that $\mathcal{Q}_r(x_0,t_0)=B_r(x_0)\times(t_0-r^p,t_0+r^p)\Subset\Om_T$. Assume that $g$ satisfies \eqref{ghypo} for some $\alpha\geq 0$ and $\max\{p,2\}\leq l<p\kappa$ with $\kappa=1+\frac{2}{N}$ such that 
$h\in L^{\gamma l'}_{\mathrm{loc}}(\Om_T)$ for some $\gamma>\frac{N+p}{p}$.
Then there exists a positive constant $C=C(N,p,s,\Lambda,C_1,C_2,C_3,C_4,l,\alpha,h)$ such that 
\begin{equation}\label{lbest}
\esssup_{\mathcal{Q}_{\frac{r}{2}}(x_0,t_0)}\,u
\leq C\max\bigg\{\bigg(\fint_{\mathcal{Q}_r(x_0,t_0)}u_{+}^{l}\,dx\, dt\bigg)^{\sigma},1\bigg\}+\mathrm{Tail}_{\infty}(u_+;x_0,\tfrac{r}{2},t_0-r^p,t_0 +r^p),
\end{equation}
where $\mathrm{Tail}_{\infty}$ is defined by \eqref{taildef} and $\sigma=\frac{p}{N(p\kappa-l)}$.
\end{Theorem}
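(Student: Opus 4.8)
The plan is to follow the De Giorgi iteration scheme adapted to the mixed local/nonlocal parabolic setting, using the energy estimate for weak subsolutions (Lemma \ref{eng1app2}, referenced in the statement) as the crucial input. First I would set up the standard machinery: fix the point $(x_0,t_0)$ and radius $r$, and for $j\in\mathbb{N}\cup\{0\}$ define the shrinking radii $r_j=\tfrac{r}{2}(1+2^{-j})$, so that $r_0=r$ and $r_j\to\tfrac r2$, together with the nested cylinders $\mathcal{Q}_j=\mathcal{Q}_{r_j}(x_0,t_0)$ and the increasing truncation levels $k_j=(1-2^{-j})k$ for a constant $k>0$ to be chosen at the end. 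I would also introduce intermediate cylinders $\widehat{\mathcal{Q}}_j$ with radius between $r_{j+1}$ and $r_j$ and cutoff functions $\psi_j\in C_c^\infty$ that are $1$ on $\mathcal{Q}_{j+1}$, supported in $\widehat{\mathcal{Q}}_j$, with $|\nabla\psi_j|\lesssim 2^j/r$ and $|\partial_t\psi_j|\lesssim 2^{jp}/r^p$. The iteration quantity is $Y_j=\fint_{\mathcal{Q}_j}(u-k_j)_+^l\,dx\,dt$ (normalized appropriately), and the goal is to show $Y_j$ satisfies a recursion of the form in Lemma \ref{ite}.

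The heart of the argument is to bound $Y_{j+1}$ by a power of $Y_j$ larger than one. Applying the energy estimate of Lemma \ref{eng1app2} on $\mathcal{Q}_j$ with level $k_{j+1}$ and cutoff $\psi_j$ controls $\esssup_t\int (u-k_{j+1})_+^2\psi_j^2\,dx$ and $\iint|\nabla((u-k_{j+1})_+\psi_j)|^p\,dx\,dt$ by $\iint(u-k_{j+1})_+^p|\nabla\psi_j|^p + (u-k_{j+1})_+^2|\partial_t\psi_j| + (\text{tail term}) + (\text{source term } g)$. Then I would apply the parabolic Sobolev inequality Lemma \ref{Sobo}(b) with $m=2$ to the function $(u-k_{j+1})_+\psi_j$ to produce the higher-integrability exponent $p\kappa=p(1+2/N)$, and interpolate/convert back to the $L^l$ norm using $l<p\kappa$. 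The standard measure-theoretic trick — on $\mathcal{Q}_{j+1}$ the set where $u>k_{j+1}$ satisfies $(u-k_j)_+ \ge k_{j+1}-k_j = 2^{-(j+1)}k$ on the larger cylinder's relevant superlevel set, so $|\{u>k_{j+1}\}\cap\mathcal{Q}_j| \le (2^{j+1}/k)^l \iint_{\mathcal{Q}_j}(u-k_j)_+^l$ — converts everything into powers of $Y_j$ times geometric factors $b^j$. The source term $g$, bounded via \eqref{ghypo} by $\alpha|u|^{l-1}+|h|$, is absorbed using $l\ge\max\{p,2\}$ so the $\alpha|u|^{l-1}$ part matches the scaling, and the $h$ part is handled by Hölder with the exponent $\gamma l'$, $\gamma>\tfrac{N+p}{p}$, which is exactly what is needed to keep a positive surplus in the exponent after the Sobolev gain.

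After collecting terms one arrives at $Y_{j+1}\le Kb^j\big(Y_j^{1+\delta_1}+Y_j^{1+\delta_2}\big)$ for suitable $b>1$, $K>0$ depending on the structural constants, $N,p,s,\Lambda,l,\alpha$ and $\|h\|$, with $\delta_1,\delta_2>0$ governed by $p\kappa-l$ and the exponent $\gamma$; here the choice $m=\frac{p}{N(p\kappa-l)}$ records precisely the homogeneity of the smallness condition $Y_0\le$ (computable threshold) coming from Lemma \ref{ite}. Choosing $k$ so that
\[
k = C\max\bigg\{\bigg(\fint_{\mathcal{Q}_r}u_+^l\,dx\,dt\bigg)^m,1\bigg\}+\mathrm{Tail}_\infty\big(u_+;x_0,\tfrac r2,t_0-r^p,t_0+r^p\big)
\]
makes $Y_0$ small enough to trigger the lemma (the $\max\{\cdot,1\}$ and the additive constant $1$ absorb the $Y_j^{1+\delta_2}$ branch and any non-scale-invariant lower order contributions, and the tail term absorbs the nonlocal contribution from $\mathbb{R}^N\setminus B_r$), so that $Y_j\to 0$. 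Since $Y_j\to 0$ forces $\esssup_{\mathcal{Q}_{r/2}}(u-k)_+=0$, i.e. $u\le k$ a.e. on $\mathcal{Q}_{r/2}$, this is exactly \eqref{lbest}.

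The main obstacle I anticipate is the careful bookkeeping of the nonlocal tail term across the De Giorgi iteration: in the energy estimate the tail is evaluated at scale $r_j$ with truncation $k_{j+1}$, and one must show that after splitting $\mathbb{R}^N\setminus B_{r_{j+1}}$ into the annulus $B_{r_j}\setminus B_{r_{j+1}}$ (controlled by the local terms) and the far region $\mathbb{R}^N\setminus B_{r_j}$ (controlled by $\mathrm{Tail}_\infty(u_+;x_0,\tfrac r2,\cdot)$ with a comparison-of-kernels factor that is summable in $j$), the far tail contributes only the single additive $\mathrm{Tail}_\infty$ term in \eqref{lbest} rather than something that degrades with $j$. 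A secondary technical point is justifying the use of the time-derivative terms rigorously via Steklov averages or a mollification in time, since $\partial_t u$ need not exist pointwise; this is routine but must be acknowledged.
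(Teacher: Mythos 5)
Your proposal is correct and follows essentially the same De Giorgi iteration scheme as the paper: the paper packages the energy estimate, parabolic Sobolev embedding, and Chebyshev step into Lemma \ref{eng1app2}, and the proof of Theorem \ref{thm1} then simply sets $\theta=\tfrac12$, requires $\hat k\ge \mathrm{Tail}_\infty(u_+;x_0,\tfrac r2,\cdot)+1$, and feeds the resulting recursion into Lemma \ref{ite} with the stated choice of $\hat k$ — exactly the plan you describe. One small precision: the far tail is not summed over $j$ but is absorbed pointwise in each iteration step by the constraint $\hat k\gtrsim\mathrm{Tail}_\infty$ (so $\mathrm{Tail}_\infty^{p-1}\le\hat k^{p-1}$ trades the tail for a power of $\hat k$), which is why the final estimate acquires a single additive $\mathrm{Tail}_\infty$ term through the eventual choice of $\hat k$.
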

\begin{proof}
For $j\in\mathbb{N}\cup\{0\}$, let $B_j,\hat{B}_j,\Gamma_j,\hat{\Gamma}_j,\hat{k},w_j,\hat{w}_j$ be given by \eqref{bl}-\eqref{tm} and \eqref{k1}-\eqref{ct} and denote 
$$
Y_j=\frac{1}{r^p}\fint_{B_j}\int_{\Gamma_j}w_j^{l}\,dx\, dt.
$$
Setting $\theta=\frac{1}{2}$ and $\hat{k}\geq \mathrm{Tail}_{\infty}(u_+;x_0,\tfrac{r}{2},t_0-r^{p},t_0+r^{p})+1$ in Lemma \ref{eng1app2} and using the fact that $r\in(0,1)$, we have
\begin{equation}
\begin{split}
Y_{j+1}&\leq \frac{C2^{aj}}{\hat{k}^{l(1-\frac{l}{p\kappa})}}\big(Y_{j}^{1+\frac{l}{\kappa N}}+Y_j^{1+\frac{l\kappa_0}{\kappa N}}\big),
\end{split}
\end{equation}
where $a=\xi(N+p+l)$ for $\xi=1+\frac{p}{N}$, $\kappa=1+\frac{2}{N}$, $\kappa_0=1-\frac{p+N}{p\gamma}\in(0,1]$, since $\gamma>\frac{N+p}{p}$, and $C=C(N,p,s,\Lambda,C_1,C_2,C_3,C_4,l,\alpha,h)>0$. For such a constant $C$, we choose
$$
\hat{k}=C\max\bigg\{\bigg(\fint_{\mathcal{Q}_r(x_0,t_0)}u_{+}^{l}\,dx\, dt\bigg)^{\sigma}, 1\bigg\}+\mathrm{Tail}_{\infty}(u_+;x_0,\tfrac{r}{2},t_0-r^{p},t_0+r^{p}).
$$
Thus setting
$$
K=\frac{C}{\hat{k}^{l(1-\frac{l}{p\kappa})}},\quad b=2^a,\quad \delta_2=\frac{l}{N\kappa},\quad\text{and}\quad \delta_1=\frac{l\kappa_0}{N\kappa}
$$
in Lemma \ref{ite}, we obtain $\lim_{j\to\infty}Y_j=0$. This implies \eqref{lbest} and the result follows.
\end{proof}
When $1<p\leq\frac{2N}{N+2}$, we prove a local boundedness estimate below in Theorem \ref{lbthm2}, where we follow the idea of the proof of \cite[Theorem $2$]{DZZ}. To this end, we assume that $m>\frac{N(2-p)}{p}$ and for a given weak subsolution $u\in L^{m}_{\mathrm{loc}}(\Om_T)\cap L^p_{\mathrm{loc}}\big(0,T;W^{1,p}_{\mathrm{loc}}(\Om)\big)\cap C_{\mathrm{loc}}\big(0,T;L^2_{\mathrm{loc}}(\Om)\big)\cap L^\infty_{\mathrm{loc}}\big(0,T;L^{p-1}_{ps}(\mathbb{R}^N)\big)$ of \eqref{maineqn} in $\Om_T$, there exists a sequence of bounded weak subsolutions $\{u_k\}_{k=1}^\infty\subset L^{\infty}_{\mathrm{loc}}(\Om_T)\cap L^p_{\mathrm{loc}}\big(0,T;W^{1,p}_{\mathrm{loc}}(\Om)\big)\cap C_{\mathrm{loc}}\big(0,T;L^2_{\mathrm{loc}}(\Om)\big)\cap L^\infty_{\mathrm{loc}}\big(0,T;L^{p-1}_{ps}(\mathbb{R}^N)\big)$ of \eqref{maineqn} in $\Om_T$ such that for some constant $C>0$, independent of $k$, we have
\begin{equation}\label{lbc1}
\|u_k\|_{L^\infty_{\mathrm{loc}}(0,T;L^{p-1}_{ps}(\R^N))}\leq C,
\end{equation}
and
\begin{equation}\label{lbc2}
u_k\to u\text{ in } L^m_{\mathrm{loc}}(\Om_T)\text{ as } k\to\infty.
\end{equation}
By applying Lemma \ref{lbthm2lemma} below, we have our second main result.
\begin{Theorem}\label{lbthm2}(\textbf{Local boundedness})
Let $1<p\leq\frac{2N}{N+2},\,0<s<1,\,m>\frac{N(2-p)}{p}$ and $u\in L^{m}_{\mathrm{loc}}(\Om_T)\cap L^p_{\mathrm{loc}}\big(0,T;W^{1,p}_{\mathrm{loc}}(\Om)\big)\cap C_{\mathrm{loc}}\big(0,T;L^2_{\mathrm{loc}}(\Om)\big)\cap L^\infty_{\mathrm{loc}}\big(0,T;L^{p-1}_{ps}(\mathbb{R}^N)\big)$ be a weak subsolution of \eqref{maineqn} in $\Om_T$ for which \eqref{lbc1} and \eqref{lbc2} holds. Suppose $(x_0,t_0)\in\Om_T$ and $R\in(0,1)$ such that $\mathcal{Q}_R(x_0,t_0)=B_R(x_0)\times(t_0-R^p,t_0+R^p)\Subset\Om_T$. Assume that $g$ satisfies the hypothesis \eqref{ghypo} for some $\alpha\geq 0$, $1<l\leq 2$ and $h\in L^\infty_{\mathrm{loc}}(\Om_T)$. Then there exists a positive constant $C=C(N,p,s,\Lambda,C_1,C_2,C_3,C_4,l,\alpha,m,h)$ such that
\begin{equation}\label{lbthm2est}
\begin{split}
\esssup_{\mathcal{Q}_{\frac{R}{2}}(x_0,t_0)}\,u
&\leq C\max\bigg\{\bigg(\fint_{\mathcal{Q}_R(x_0,t_0)}u_{+}^m\,dx\,dt\bigg)^\frac{p}{(p+N)(m-\mu_m)},\bigg(\fint_{\mathcal{Q}_{R}(x_0,t_0)}u_{+}^m\,dx\,dt\bigg)^\frac{p}{(p+N)(m-2-\mu_m)}\bigg\}\\
&\qquad+\mathrm{Tail}_{\infty}(u_+;x_0,\tfrac{R}{2},t_0-R^p,t_0+R^p),
\end{split}
\end{equation}
where $\mu_m=\frac{(m-p\kappa)N}{(p+N)}$ with $\kappa=1+\frac{2}{N}$ and $\mathrm{Tail}_\infty$ is defined by \eqref{taildef}.
\end{Theorem}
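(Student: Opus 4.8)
The plan is to adapt the De Giorgi iteration from the proof of Theorem \ref{thm1}, this time organized around the quantity $\fint u_+^m$ rather than $\fint u_+^l$, and to run it first for the bounded approximants $u_k$ rather than for $u$ itself; once an estimate for $u_k$ with a constant independent of $k$ is available, one passes to the limit using \eqref{lbc1} and \eqref{lbc2}. Working with $u_k\in L^\infty_{\mathrm{loc}}(\Om_T)$ is essential in the singular range $1<p<\frac{2N}{N+2}$: the a priori local boundedness is precisely what makes the initial term of the iteration finite and what lets one absorb the $L^\infty(L^2)$-energy term into lower powers of the truncated function via interpolation against $\fint u_+^m$, which is not available directly for $u$.

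Fix $k$ and let $u_k$ play the role of $u$. For $j\in\N\cup\{0\}$ introduce cylinders $B_j\times\Gamma_j$ shrinking to $\mathcal{Q}_{R/2}(x_0,t_0)$, intermediate cylinders $\hat{B}_j\times\hat{\Gamma}_j$ in between, the truncated functions $w_j,\hat{w}_j$ and the associated cutoff functions defined exactly as in the proof of Theorem \ref{thm1} (with truncation levels increasing to $\hat{k}$), where $\hat{k}\ge\mathrm{Tail}_{\infty}(u_+;x_0,\tfrac{R}{2},t_0-R^p,t_0+R^p)+1$ is to be fixed later. Set
\[
Y_j=\frac{1}{R^p}\fint_{B_j}\int_{\Gamma_j}w_j^{\,m}\,dx\,dt.
\]
Applying Lemma \ref{lbthm2lemma} with $\theta=\tfrac12$ and this $\hat{k}$, and using $R\in(0,1)$ to discard lower powers of $R$, yields a recursion of De Giorgi type
\[
Y_{j+1}\le\frac{C\,2^{aj}}{\hat{k}^{\,\beta}}\bigl(Y_j^{1+\delta_1}+Y_j^{1+\delta_2}\bigr),\qquad j\in\N\cup\{0\},
\]
with $a,\beta>0$, $0<\delta_1\le\delta_2$ and $C\ge1$ depending only on $N,p,s,\Lambda,C_1,C_2,C_3,C_4,l,\alpha,m,h$; after feeding the truncated energy estimate into the parabolic Sobolev inequality of Lemma \ref{Sobo} and using Lemma \ref{Auxfnlemma}, the gain exponents $\delta_1,\delta_2$ are the positive numbers built out of $m$, $\mu_m$ and $p\kappa$ that match the two exponents in \eqref{lbthm2est}, and the hypothesis $m>\frac{N(2-p)}{p}$ is exactly what forces $\delta_1>0$, equivalently $m-2-\mu_m>0$. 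Every quantity above is finite since $u_k$ is locally bounded.

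Next choose $\hat{k}$ to be the right-hand side of \eqref{lbthm2est} with $u$ replaced by $u_k$; enlarging $C$ if needed, this makes $Y_0\le CR^{-p}\fint_{\mathcal{Q}_R(x_0,t_0)}(u_k)_+^m\,dx\,dt$ satisfy the smallness hypothesis on the initial term in Lemma \ref{ite} with $K=C\hat{k}^{-\beta}$, $b=2^a$ and gain exponents $\delta_1,\delta_2$. By Lemma \ref{ite} then $\lim_{j\to\infty}Y_j=0$, whence $\esssup_{\mathcal{Q}_{R/2}(x_0,t_0)}u_k\le\hat{k}$ with a constant independent of $k$ thanks to \eqref{lbc1}. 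Finally, letting $k\to\infty$: by \eqref{lbc2} we pass to a subsequence along which $u_k\to u$ a.e.\ and in $L^m$ on $\mathcal{Q}_R(x_0,t_0)$, so the local average converges to the corresponding average of $u_+^m$; the nonlocal tail term is controlled uniformly by \eqref{lbc1} and passes to the limit by dominated convergence, while the a.e.\ convergence gives $\esssup_{\mathcal{Q}_{R/2}}u\le\liminf_k\esssup_{\mathcal{Q}_{R/2}}u_k$. Combining these estimates yields \eqref{lbthm2est}.

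I expect the substantial work to sit in Lemma \ref{lbthm2lemma}: one has to track the singular scaling through the truncated energy estimate, interpolate the $L^\infty(L^2)$-energy term against $\fint u_+^m$, and combine this with the parabolic Sobolev inequality so that the recursion is genuinely contractive with both $\delta_1,\delta_2>0$, which is exactly where the restriction $m>\frac{N(2-p)}{p}$ enters and where the two distinct exponents in \eqref{lbthm2est} are produced. A secondary obstacle is making the recursion constant, and especially the threshold involving the tail, uniform in $k$, so that the limit passage does not spoil the estimate; this is the structural reason for imposing \eqref{lbc1} and \eqref{lbc2} on the approximants.
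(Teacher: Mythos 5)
Your proposal goes wrong at the point where you claim that Lemma~\ref{lbthm2lemma} with $\theta=\tfrac12$ produces a self-contained De Giorgi recursion $Y_{j+1}\le C2^{aj}\hat{k}^{-\beta}(Y_j^{1+\delta_1}+Y_j^{1+\delta_2})$. That is not what the lemma states: the right-hand side of \eqref{lbthm2lemmaest} carries the additional multiplicative factor $\|\hat{w}_j\|_{L^\infty(\mathcal{Q}_{j+1})}^{m-p\kappa}$, and in the singular range $1<p<\tfrac{2N}{N+2}$ one has $p\kappa<2<m$, so $m-p\kappa>0$ and this factor is a genuinely positive power of the local supremum. It cannot be removed: since $m>p\kappa$, the passage from $\fint w_{j+1}^m$ to the parabolic Sobolev quantity $\fint(\hat w_j\Phi_j)^{p\kappa}$ goes the wrong way in H\"older, so instead of peeling off a measure factor (as in Theorem~\ref{thm1} where $l\le p\kappa$) one must peel off $\|\hat w_j\|_\infty^{m-p\kappa}$. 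The recursion coefficient therefore depends on exactly the $L^\infty$ norm one is trying to bound, and a single application of Lemma~\ref{ite} cannot close this circle. Even for the bounded approximants $u_k$, your scheme would only give $\esssup_{\mathcal{Q}_{R/2}}u_k$ in terms of a positive power of $\esssup_{\mathcal{Q}_{R}}u_k$, and the latter is not uniformly controlled in $k$; \eqref{lbc1}--\eqref{lbc2} do not supply such control.

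What the paper actually does, and what your proposal omits, is a second, outer iteration. One introduces intermediate radii $R_n\uparrow R$ with $R_0=R/2$, sets $S_n=\esssup_{\mathcal U_n}u_+$, and at each level $n$ runs the De Giorgi iteration of Lemma~\ref{lbthm2lemma} with $\|\hat w_j\|_{L^\infty(\mathcal{Q}_{j+1})}\le S_{n+1}$. The level $\hat{k}$ is chosen in \eqref{nk} to depend on $S_{n+1}$ itself, and the outcome is an estimate of the shape
\[
S_n\le C\,2^{cn}\bigg(\fint u_+^m\bigg)^{\sigma_i}S_{n+1}^{\tau_i}+\tfrac12\mathrm{Tail}_\infty,\qquad \tau_i\in(0,1),
\]
where the hypothesis $m>\frac{N(2-p)}{p}$ is exactly what guarantees $\frac{N(m-p\kappa)}{(m-2)(p+N)},\frac{N(m-p\kappa)}{m(p+N)}\in(0,1)$. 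Young's inequality turns this into $S_n\le\epsilon S_{n+1}+P_0+T_1^nP_1+T_2^nP_2$, and the geometric iteration in $n$ (with $\epsilon$ chosen so that $\epsilon T_i<1$, as in \eqref{lb2it3}--\eqref{lb2it4}) absorbs the $S_{n+1}$ term and produces the two exponents $\frac{p}{(p+N)(m-\mu_m)}$ and $\frac{p}{(p+N)(m-2-\mu_m)}$. It is this nested structure, not a pair of gain exponents $\delta_1,\delta_2$ fed into Lemma~\ref{ite}, that makes the bound uniform in $k$; your description of where the ``substantial work'' sits (inside Lemma~\ref{lbthm2lemma}, making the recursion contractive) misses the place the argument actually has to be closed.
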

\begin{proof}
By \eqref{lbc1} and \eqref{lbc2}, we may assume that $u$ is qualitatively locally bounded and run the arguments below to get the required estimate \eqref{lbthm2est}. 
Indeed, since every $u_k$ is a bounded weak subsolution of \eqref{maineqn}, the arguments below holds for $u_k$ in place of $u$. Then the estimate \eqref{lbthm2est} holds for every $u_k$, which gives a $k$-independent bound of $u_k$ using \eqref{lbc1} and \eqref{lbc2}. By the pointwise convergence of $u_k$ to $u$, we conclude that $u$ is locally bounded. 
To this end, let
$$
R_0=\frac{R}{2},\quad R_n=\frac{R}{2}+\sum_{i=1}^{n}2^{-i-1}R,
\quad n\in\N.
$$
Moreover, let
$$
\mathcal{U}_n=B_{R_n}(x_0)\times(t_0-R_n^{p},t_0+R_n^{p}),\quad S_n=\esssup_{\mathcal{U}_n}\,u_+,\quad n\in\mathbb{N}\cup\{0\}.
$$
By denoting $r=R_{n+1}$ and $\theta r=R_n$, we have
$$
\theta=\frac{\frac{1}{2}+\sum_{i=1}^{n}2^{-i-1}}{\frac{1}{2}+\sum_{i=1}^{n+1}2^{-i-1}}\in\Big[\frac{1}{2},1\Big).
$$
For $j\in\mathbb{N}\cup\{0\}$, let $B_j,\Gamma_j,k_j,\hat{k}$ be defined as in \eqref{bl}, \eqref{tm}, \eqref{k}, \eqref{k1} and $m>\frac{N(2-p)}{p}$. 
By setting
$$
X_j=\fint_{B_j}\int_{\Gamma_j}(u-k_j)_{+}^m\,dx\,dt
$$
in Lemma \ref{lbthm2lemma}, we obtain
\begin{equation}\label{lbthm2lemmap1}
\begin{split}
X_{j+1}&\leq\Bigg[\frac{C}{r^{\frac{p^2}{N}}}\frac{2^{aj}}{(1-\theta)^\frac{(N+p)^2}{N}}\Big(\frac{1}{\hat{k}^{m-2}}+\frac{1}{\hat{k}^{m-p}}\Big)^{1+\frac{p}{N}}+C\frac{2^{aj}}{\hat{k}^{m(1+\frac{p}{N})}}\Bigg]\|u_+\|^{m-p\kappa}_{L^\infty(\mathcal{U}_{n+1})}X_j^{1+\frac{p}{N}},
\end{split}
\end{equation}
for some positive constant $C=C(N,p,s,\Lambda,C_1,C_2,C_3,C_4,l,\alpha,m,h)$, where $a=(N+p+m)(1+\frac{p}{N})$. Now setting $Y_j=\frac{X_j}{R_n^{p}}$ for $j\in\mathbb{N}\cup\{0\}$ from \eqref{lbthm2lemmap1} we obtain
\begin{equation}\label{lbthm2lemmap2}
\begin{split}
Y_{j+1}&\leq C2^{en}\bigg(\frac{1}{\hat{k}^{(m-2)(1+\frac{p}{N})}}+\frac{1}{\hat{k}^{m(1+\frac{p}{N})}}\bigg)S_{n+1}^{m-p\kappa}2^{aj}Y_{j}^{1+\frac{p}{N}},
\end{split}
\end{equation}
for some positive constant $C=C(N,p,s,\Lambda,C_1,C_2,C_3,C_4,l,\alpha,m,h)$, where $e=\frac{(N+p)^2}{N}$. By choosing
\begin{equation}\label{nk}
\begin{split}
\hat{k}&=C2^\frac{enN}{(m-2)(p+N)}\bigg(\fint_{B_{R_{n+1}}(x_0)}\fint_{t_0-R_{n+1}^p}^{t_0-R_{n+1}^p}u_+^{m}\,dx\,dt\bigg)^\frac{p}{(m-2)(p+N)}S_{n+1}^\frac{N(m-p\kappa)}{(m-2)(p+N)}\\
&\qquad+C2^\frac{enN}{m(p+N)}\bigg(\fint_{B_{R_{n+1}}(x_0)}\fint_{t_0-R_{n+1}^p}^{t_0-R_{n+1}^p}u_+^{m}\,dx\,dt\bigg)^\frac{p}{m(p+N)}S_{n+1}^\frac{N(m-p\kappa)}{m(p+N)}\\
&\qquad+\frac{1}{2}\mathrm{Tail}_\infty(u_+;x_0,R_n,t_0-R_{n+1}^p,t_0+R_{n+1}^p),
\end{split}
\end{equation}
we obtain
$
Y_0\leq (2K)^{-\frac{1}{\delta}}b^{-\frac{1}{\delta^2}},
$
where
$$
K=C 2^{en-1}\bigg(\frac{1}{\hat{k}^{(m-2)(1+\frac{p}{N})}}+\frac{1}{\hat{k}^{m(1+\frac{p}{N})}}\bigg)S_{n+1}^{m-p\kappa},\quad b=2^{a}
\quad\text{and}\quad \delta_2=\delta_1=\delta=\frac{p}{N}.
$$
Therefore, by Lemma \ref{ite} we have $\lim_{j\to\infty}Y_j=0$ and we get 
\begin{equation}\label{lb2it1}
\begin{split}
\esssup_{\mathcal{U}_n}\,u_+&\leq C2^\frac{enN}{(m-2)(p+N)}\Big(\fint_{B_{R_{n+1}}(x_0)}\fint_{t_0-R_{n+1}^p}^{t_0-R_{n+1}^p}u_+^{m}\,dx\,dt\Big)^\frac{p}{(m-2)(p+N)}S_{n+1}^\frac{N(m-p\kappa)}{(m-2)(p+N)}\\
&\qquad+C2^\frac{enN}{m(p+N)}\bigg(\fint_{B_{R_{n+1}}(x_0)}\fint_{t_0-R_{n+1}^p}^{t_0-R_{n+1}^p}u_+^{m}\,dx\, dt\bigg)^\frac{p}{m(p+N)}S_{n+1}^\frac{N(m-p\kappa)}{m(p+N)}\\
&\qquad+\frac{1}{2}\mathrm{Tail}_\infty(u_+;x_0,R_n,t_0-R_{n+1}^p,t_0+R_{n+1}^p),
\end{split}
\end{equation}
for some positive constant $C=C(N,p,s,\Lambda,C_1,C_2,C_3,C_4,l,\alpha,m,h)$. Since $1<p\leq\frac{2N}{N+2},\,\kappa=1+\frac{2}{N}$ and $m>\frac{N(2-p)}{p}$, we have
$$
0<\frac{N(m-p\kappa)}{(m-2)(p+N)},\frac{N(m-p\kappa)}{m(p+N)}<1.
$$
By Young's inequality with $\epsilon$ (to be chosen below) in \eqref{lb2it1}, we get
\begin{equation}\label{lb2it2}
\begin{split}
S_n=\esssup_{\mathcal{U}_n}\,u_+&\leq\epsilon S_{n+1}+P_0+T_1^{n}P_1+T_2^{n}P_2,\quad n\in\mathbb{N}\cup\{0\},
\end{split}
\end{equation}
where
\begin{align*}
P_0&=\frac{1}{2}\mathrm{Tail}_\infty(u_+;x_0,\frac{R}{2},t_0-R^p,t_0+R^p),\\
P_1&=C\epsilon^{-\frac{\mu_m}{m-2-\mu_m}}\bigg(\fint_{B_R(x_0)}\fint_{t_0-R^p}^{t_0+R^p}u_+^{m}\,dx\, dt\bigg)^\frac{p}{(p+N)(m-2-\mu_m)},\\
P_2&=C\epsilon^{-\frac{\mu_m}{m-\mu_m}}\bigg(\fint_{B_R(x_0)}\fint_{t_0-R^p}^{t_0+R^p}u_+^{m}\,dx\,dt\bigg)^\frac{p}{(p+N)(m-\mu_m)},\\
T_1&=2^\frac{eN}{(p+N)(m-2-\mu_m)}
\quad\text{and}\quad 
T_2=2^\frac{eN}{(p+N)(m-\mu_m)},
\end{align*}
for $\mu_m=\frac{(m-p\kappa)N}{(p+N)}$. 
We claim that
\begin{equation}\label{lb2it3}
S_0\leq \epsilon^{n+1}S_{n+1}+P_0\sum_{i=0}^{n}\epsilon^i+P_1\sum_{i=0}^{n}(\epsilon T_1)^i+P_2\sum_{i=0}^{n}(\epsilon T_2)^i,\quad n\in\mathbb{N}\cup\{0\}.
\end{equation}
Indeed, by \eqref{lb2it2}, the inequality \eqref{lb2it3} holds for $n=0$. We assume that \eqref{lb2it3} holds for $n=j$ and prove it for $n=j+1$. To this end, assuming \eqref{lb2it3} for $n=j$, we observe that
\begin{equation}\label{lb2it4}
\begin{split}
S_0&\leq \epsilon^{j+1}S_{j+1}+P_0\sum_{i=0}^{j}\epsilon^i+P_1\sum_{i=0}^{j}(\epsilon T_1)^i+P_2\sum_{i=0}^{j}(\epsilon T_2)^i\\
&\leq\epsilon^{j+1}(\epsilon S_{j+2}+P_0+T_1^{j+1}P_1+T_2^{j+1}P_2)+P_0\sum_{i=0}^{j}\epsilon^i+P_1\sum_{i=0}^{j}(\epsilon T_1)^i+P_2\sum_{i=0}^{j}(\epsilon T_2)^i\\
&=\epsilon^{j+2}S_{j+2}+P_0\sum_{i=0}^{j+1}\epsilon^i+P_1\sum_{i=0}^{j+1}(\epsilon T_1)^i+P_2\sum_{i=0}^{j+1}(\epsilon T_2)^i.
\end{split}
\end{equation}
Thus, \eqref{lb2it3} holds for $n=j+1$. By induction \eqref{lb2it3} holds for every $n\in\mathbb{N}\cup\{0\}$.
By inserting $P_0,P_1,P_2,T_1,T_2$, choosing $\epsilon=2^{-\frac{eN}{(p+N)(m-2-\mu_m)}-1}$ in \eqref{lb2it3} and letting $n\to\infty$ in the resulting inequality, 
we conclude that \eqref{lbthm2est} holds. This completes the proof.
\end{proof}

\subsection{Preliminaries}
For $f\in L^1(\Om_T)$, we define the mollification in time by 
\begin{equation}\label{mol}
f_m(x,t)=\frac{1}{m}\int_{0}^{t}e^{\frac{s-t}{m}}f(x,s)\,ds,\quad m>0.
\end{equation}
This is useful in our energy estimates below (see for example Lemma \ref{eng1}) to justify the use of test functions depending on the solution itself. For more details on $f_m$, we refer to \cite{KLin}. For short, we denote 
$$d\mu=K(x,y,t)\,dx\,dy,$$
where $K(x,y,t)$ is given by \eqref{kernel}.

\begin{Lemma}\label{eng1}{(\textbf{Energy estimate})
Let $1<p<\infty,\,0<s<1$ and $u\in L^p_{\mathrm{loc}}\big(0,T;W^{1,p}_{\mathrm{loc}}(\Om)\big)\cap C_{\mathrm{loc}}\big(0,T;L^2_{\mathrm{loc}}(\Om)\big)\cap L^\infty_{\mathrm{loc}}\big(0,T;L^{p-1}_{ps}(\mathbb{R}^N)\big)$ be a weak subsolution of \eqref{maineqn} in $\Om_T$. Suppose $x_0\in\Om,\,r>0$ such that $B_r=B_r(x_0)\Subset\Om$ and $0<\tau_1<\tau_2$, $\tau>0$ such that $(\tau_1-\tau,\tau_2)\Subset(0,T)$. For $k\in\mathbb{R}$, we denote by $w_+=(u-k)_+$. Assume that $g$ satisfies \eqref{ghypo} for some $\alpha\geq 0$, $1<l\le\max\{2,p(1+\frac{2}{N})\}$ and $h\in L^{l'}_{\mathrm{loc}}(\Om_T)$. Then there exists a positive constant $C=C(p,\Lambda,C_1,C_2,C_3,C_4,l,\alpha)$ such that
\begin{equation}\label{eng1eqn}
\begin{split}
&\esssup_{\tau_1-\tau<t<\tau_2}\int_{B_r}\zeta_+(u,k)\xi^p\,dx+\int_{\tau_1-\tau}^{\tau_2}\int_{B_r}|\nabla w_+|^p\xi^p\,dx\,dt\\
&\leq C\bigg(\int_{\tau_1-\tau}^{\tau_2}\int_{B_r}w_+^p|\nabla\xi|^p\,dx\,dt+\int_{\tau_1-\tau}^{\tau_2}\int_{B_r}\int_{B_r}\max\{w_+(x,t),w_+(y,t)\}^p|\xi(x,t)-\xi(y,t)|^p\,d\mu\,dt\\
&\qquad+\esssup_{(x,t)\in\mathrm{supp}\,\xi,\,\tau_1-\tau<t<\tau_2}\int_{\mathbb{R}^N\setminus B_r}\frac{w_+(y,t)^{p-1}}{|x-y|^{N+ps}}\,dy\int_{\tau_1-\tau}^{\tau_2}\int_{B_r}w_+\xi^p\,dx\,dt\\
&\qquad+\int_{\tau_1-\tau}^{\tau_2}\int_{B_r}\zeta_+(u,k)|\partial_t\xi^p|\,dx\,dt+\int_{B_r}\zeta_+(u(x,\tau_1-\tau),k)\xi(x,\tau_1-\tau)^p\,dx\\
&\qquad+\int_{\tau_1-\tau}^{\tau_2}\int_{B_r}\big(|u|^l+|h|^{l'}+w_+^{l}\big)\chi_{\{u\geq k\}}\xi^p\,dx\,dt\bigg),
\end{split}
\end{equation}
where $\xi(x,t)=\psi(x)\eta(t)$, with $\psi\in C_c^{\infty}(B_r)$ and $\eta\in C^\infty(\tau_1-\tau,\tau_2)$ are nonnegative functions. Here $\zeta_+$ is given by \eqref{xi}.}
\end{Lemma}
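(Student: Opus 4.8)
The plan is to test the time-mollified weak formulation of \eqref{maineqn} with $\phi=w_+\xi^p=(u-k)_+\xi^p$ and to estimate the four resulting groups of terms — the parabolic term, the nonlocal term, the local divergence term, and the source term — one by one, collecting the nonnegative contributions on the left and all error terms on the right. Since $\phi$ depends on $u$, which need not be weakly differentiable in time, I would first work with the mollification $u_m$ from \eqref{mol}, for which $\partial_t u_m=\tfrac1m(u-u_m)$, use the mollified form of \eqref{wksol}, integrate over a time interval $(\tau_1-\tau,\tau)$ with $\tau\in(\tau_1,\tau_2)$, and then let $m\to 0$ using $u\in C_{\mathrm{loc}}(0,T;L^2_{\mathrm{loc}}(\Om))$ together with lower semicontinuity of the relevant convex functionals; this is the standard device, see \cite{KLin}. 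For the parabolic term I would exploit $\partial_a\zeta_+(a,k)=(a-k)_+$, so that $\partial_t u\cdot w_+=\partial_t\zeta_+(u,k)$ up to the mollification error, integrate by parts in time, and obtain $\esssup_t\int_{B_r}\zeta_+(u,k)\xi^p$, the term $-\int\zeta_+(u,k)\partial_t(\xi^p)$, and the initial term $\int_{B_r}\zeta_+(u(\cdot,\tau_1-\tau),k)\xi(\cdot,\tau_1-\tau)^p$; taking the supremum over $\tau$ then produces the first term on the left of \eqref{eng1eqn}.

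For the local divergence term I would expand $\nabla\phi=\xi^p\nabla w_+ + p\,w_+\xi^{p-1}\nabla\xi$ on the set $\{u>k\}$, use the ellipticity $\mathcal{B}\cdot\zeta\ge C_3|\zeta|^p$ and the growth $|\mathcal{B}|\le C_4|\zeta|^{p-1}$ from \eqref{ls}, and apply Young's inequality to absorb the cross term, which leaves $C_3\int|\nabla w_+|^p\xi^p$ on the left and $C\int w_+^p|\nabla\xi|^p$ on the right. For the source term I would use \eqref{ghypo} and Young's inequality to bound $|g(x,t,u)|\,w_+\xi^p\chi_{\{u\ge k\}}$ by $C(|u|^l+|h|^{l'}+w_+^l)\chi_{\{u\ge k\}}\xi^p$, which is the last term on the right-hand side.

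The main work, and the main obstacle, is the nonlocal term $\int\!\int\mathcal{A}(u(x,y,t))\big(\phi(x,t)-\phi(y,t)\big)K\,dx\,dy$. I would split $\mathbb{R}^N\times\mathbb{R}^N$ into $B_r\times B_r$ and its complement. Since $\psi\in C_c^\infty(B_r)$, the test function vanishes outside $B_r$, so on the complement only the mixed region $(x,y)\in B_r\times(\mathbb{R}^N\setminus B_r)$ and its symmetric counterpart contribute, with $\phi(x,t)=w_+(x,t)\xi^p(x,t)$; here I would use \eqref{L}, observe that in the "bad sign" region $u(y)>u(x)\ge k$ one has $|u(x)-u(y)|^{p-1}\le w_+(y)^{p-1}$ while in the "good sign" region the integrand is nonnegative, and bound $K\le\Lambda|x-y|^{-N-ps}$, which yields the tail term on the right of \eqref{eng1eqn}. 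On $B_r\times B_r$ I would invoke the pointwise algebraic inequality in the spirit of \cite{Kuusilocal, DZZ}, bounding $|a-b|^{p-2}(a-b)\big((a-k)_+\xi_1^p-(b-k)_+\xi_2^p\big)$ from below by $c\,\big|(a-k)_+-(b-k)_+\big|^p\min\{\xi_1,\xi_2\}^p$ minus $C\max\{(a-k)_+,(b-k)_+\}^p|\xi_1-\xi_2|^p$; discarding the nonnegative first contribution leaves precisely the second term on the right of \eqref{eng1eqn}. Collecting all the estimates, moving the nonnegative left-hand contributions to the left and the errors to the right, and noting that the constants do not depend on $r$ completes the proof. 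The delicate points will be the justification of the $m\to0$ limit in the time-mollification argument and the careful bookkeeping of signs and of the powers of $\xi$ in the nonlocal splitting.
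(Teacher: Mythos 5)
Your proposal is correct and follows essentially the same route as the paper: test with the time-mollified $w_+\xi^p$, split the resulting inequality into parabolic, local, nonlocal and source contributions, bound the parabolic term via $\zeta_+$ and integration by parts, absorb the cross term in the local part by Young's inequality, split the nonlocal term into the diagonal $B_r\times B_r$ piece (discarding the nonnegative Gagliardo-type contribution and keeping only the $\max\{w_+\}^p|\xi(x)-\xi(y)|^p$ error) and the off-diagonal tail piece (using the sign structure and $|u(x)-u(y)|\le w_+(y)$ when $u(y)>u(x)\ge k$), and control the source via \eqref{ghypo} and Young. The only cosmetic difference is that the paper uses a piecewise-linear time cutoff $\zeta_\epsilon$ together with the exponential mollification and cites the pointwise estimates for the parabolic and nonlocal pieces from \cite{Verenacontinuity} and \cite{BGK} rather than rederiving them, whereas you sketch those algebraic estimates directly; the underlying ideas are the same.
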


\begin{Remark}\label{eng1rmk}
If $g(x,t,u)=h(x,t)$ in $\Om_T\times\R$, then the sixth integral in the right hand side of \eqref{eng1eqn} can be replaced by the integral $\int_{\tau_1-\tau}^{\tau_2}\int_{B_r}|h|w_{+}\xi^p\,dx\,dt$, which vanishes if $h\equiv 0$.
\end{Remark}

\begin{Remark}\label{eng1rmk2}
If $g\equiv 0$ in $\Om_T\times\R$ and $u\in L^p_{\mathrm{loc}}\big(0,T;W^{1,p}_{\mathrm{loc}}(\Om)\big)\cap C_{\mathrm{loc}}\big(0,T;L^2_{\mathrm{loc}}(\Om)\big)\cap L^\infty_{\mathrm{loc}}\big(0,T;\\L^{p-1}_{ps}(\mathbb{R}^N)\big)$ is a weak supersolution of \eqref{maineqn} in $\Om_T$ such that ${B_r(x_0)}\times(\tau_1-\tau,\tau_2)\Subset\Om_T$, then proceeding with similar arguments as in the proof of Lemma \ref{eng1}, the estimate \eqref{eng1eqn} will hold by replacing $\zeta_+,w_+$ and the sixth integral in right hand side of \eqref{eng1eqn} with $\zeta_-,w_-$ and zero respectively. Here $w_-=(u-k)_-$ for $k\in\R$ and $\zeta_{-}$ is given by \eqref{xi}.
\end{Remark}

\begin{proof}
Let $t_1=\tau_1-\tau$ and $t_2=\tau_2$. For small enough $\epsilon>0$ and for fixed $t_1<\theta_1<\theta_2<t_2$, we define a Lipschitz cutoff function $\zeta_{\epsilon}:[t_1,t_2]\to[0,1]$ by
\begin{equation}\label{Lip1}
  \zeta_{\epsilon}(t) =
  \begin{cases}
    0 & \text{for } t_1\leq t\leq \theta_1-\epsilon,\\
    1+\frac{t-\theta_1}{\epsilon} & \text{for }\theta_1-\epsilon<t\leq \theta_1, \\
    1, & \text{for } \theta_1<t\leq \theta_2, \\
    1-\frac{t-\theta_2}{\epsilon}, & \text{for }\theta_2<t\leq \theta_2+\epsilon, \\
    0, & \text{for } \theta_2 +\epsilon<t\leq t_2.
  \end{cases}
\end{equation}
Recalling that $\xi(x,t)=\psi(x)\eta(t)$, we choose
\[
\phi(x,t)=w_+(x,t)\xi(x,t)^p\zeta_{\epsilon}(t)
\] 
as an admissible test function in \eqref{wksol}. Indeed, for $(\cdot)_m$ as defined in \eqref{mol} and following \cite{Verenacontinuity,KLin}, the weak subsolution $u$ of \eqref{maineqn} satisfies the following mollified inequality
\begin{equation}\label{mollified}
\lim_{\epsilon\to 0}\lim_{m\to 0}(T_{m}^{\epsilon}+L_{m}^{\epsilon}+N_m^{\epsilon}-S_{m}^{\epsilon})\leq 0,
\end{equation}
where
\begin{equation*}
\begin{split}
T_{m}^{\epsilon}&=\int_{t_1}^{t_2}\int_{B_r}\partial_t{u_m}(x,t)\phi(x,t)\,dx\,dt
=\int_{t_1}^{t_2}\int_{B_r}\partial_t{u_m}(x,t)\,w_+(x,t)\xi(x,t)^p\zeta_{\epsilon}(t)\,dx\,dt,\\
L_m^{\epsilon}&=\int_{t_1}^{t_2}\int_{B_r}(\mathcal{B}(x,t,u,\nabla u))_m\nabla\phi(x,t)\,dx\,dt\\
&=\int_{t_1}^{t_2}\int_{B_r}(\mathcal{B}(x,t,u,\nabla u))_m\nabla\big(w_+(x,t)\xi(x,t)^p\zeta_{\epsilon}(t)\big)\,dx\,dt,\\
N_{m}^{\epsilon}&=\int_{t_1}^{t_2}\int_{\mathbb{R}^N}\int_{\mathbb{R}^N}(\mathcal{V}(u(x,y,t)))_{m}(\phi(x,t)-\phi(y,t))\,dx\,dy\,dt\\
&=\int_{t_1}^{t_2}\int_{\mathbb{R}^N}\int_{\mathbb{R}^N}(\mathcal{V}(u(x,y,t)))_{m}\big(w_+(x,t)\xi(x,t)^p-w_+(y,t)\xi(y,t)^p\big)\zeta_{\epsilon}(t)\,dx\,dy\,dt,
\end{split}
\end{equation*}
with $\mathcal{V}(u(x,y,t))=\mathcal{A}(u(x,y,t))K(x,y,t)$, and
\begin{align*}
S_m^{\epsilon}&=\int_{t_1}^{t_2}\int_{\Omega}(g(x,t,u))_m\phi(x,t)\,dx\,dt
=\int_{t_1}^{t_2}\int_{\Omega}(g(x,t,u))_mw_+(x,t)\xi(x,t)^p\zeta_{\epsilon}(t)\,dx\,dt.
\end{align*}
\textbf{Estimate of $T_{m}^{\epsilon}$:} Recalling $\zeta_+(u,k)$ defined in \eqref{xi}, from the proof of \cite[Proposition $3.1$, p. $9$]{Verenacontinuity}, we arrive at
\begin{equation}\label{Ihpre}
\begin{split}
\lim_{\epsilon\to 0}\lim_{m\to 0}T_{m}^{\epsilon}
&\geq \int_{B_r}\zeta_+(u(x,\theta_2),k)\xi(x,\theta_2)^p\,dx-\int_{B_r}\zeta_+(u(x,\theta_1),k)\xi(x,\theta_1)^p\,dx\\
&\qquad-\int_{\theta_1}^{\theta_2}\int_{B_r}\zeta_+(u(x,t),k)\partial_t\xi(x,t)^p\,dx\,dt.
\end{split}
\end{equation}
\textbf{Estimate of $L_{m}^{\epsilon}$:} From the proof of \cite[Proposition $3.1$, p. $10$]{Verenacontinuity}, we obtain
\begin{equation}\label{llimit}
\begin{split}
\lim_{\epsilon\to 0}\lim_{m\to 0}L_{m}^{\epsilon}
&\geq\frac{C_3}{p}\int_{\theta_1}^{\theta_2}\int_{B_r}|\nabla w_+|^p\xi^p\,dx\,dt-C\int_{\theta_1}^{\theta_2}\int_{B_r}w_+^{p}|\nabla\xi|^p\,dx\,dt,
\end{split}
\end{equation}
for some constant $C=C(C_3,C_4,p)>0$, where $C_3,C_4$ are given by \eqref{ls}.\\
\textbf{Estimate of $N_{m}^{\epsilon}$:} From the proof of \cite[Lemma $3.1$, p. $9-12$]{BGK}, we obtain
\begin{equation}\label{jlimit}
\begin{split}
\lim_{\epsilon\to 0}\lim_{m\to 0}N_{m}^{\epsilon}&\geq c\int_{\theta_1}^{\theta_2}\int_{B_r}\int_{B_r}|w_+(x,t)\xi(x,t)-w_+(y,t)\xi(y,t)|^p\,d\mu\, dt\\
&\qquad-C\int_{\theta_1}^{\theta_2}\int_{B_r}\int_{B_r}\max\{w_+(x,t),w_+(y,t)\}^p|\xi(x,t)-\xi(y,t)|^p\,d\mu\,dt\\
&\qquad-C\esssup_{(x,t)\in\mathrm{supp}\,\zeta,\,\theta_1<t<\theta_2}\int_{\mathbb{R}^N\setminus B_r}\frac{w_+(y,t)^{p-1}}{|x-y|^{N+ps}}\,dy
\int_{\theta_1}^{\theta_2}\int_{B_r}w_+(x,t)\xi(x,t)^p\,dx\,dt,
\end{split}
\end{equation}
for some positive constants $c=c(C_1,C_2,\Lambda,p)$ and $C=C(C_1,C_2,\Lambda,p)$, where $C_1,C_2$ are given by \eqref{L}.\\
\textbf{Estimate of $S_{m}^{\epsilon}$:} Since $u\in L^p(t_1,t_2;W^{1,p}(B_r))\cap C(t_1,t_2;L^2(B_r))$, by Lemma \ref{Sobo} (a), we have $u\in L^l(t_1,t_2;L^l(B_r))$. Using this fact along with the given hypothesis \eqref{ghypo} on $g$, we obtain $g\in L^{l'}(t_1,t_2;L^{l'}(B_r))$ and $\phi=w_+(x,t)\xi(x,t)^p\zeta_{\epsilon}(t)\in L^l(t_1,t_2;L^{l}(B_r))$. Setting
$$
S^{\epsilon}=\int_{t_1}^{t_2}\int_{B_r}g(x,t,u)w_+(x,t)\xi(x,t)^p\zeta_{\epsilon}(t)\,dx\,dt,
$$
by H\"older's inequality, we obtain
\begin{equation}\label{sest}
\begin{split}
|S_m^{\epsilon}-S^{\epsilon}|&\leq\left(\int_{t_1}^{t_2}\int_{B_r}|g_m-g|^{l'}\,dx\,dt\right)^\frac{1}{l'} \left(\int_{t_1}^{t_2}\int_{B_r}|\phi|^l\,dx\,dt\right)^\frac{1}{l},
\end{split}
\end{equation}
where $\phi(x,t)=w_+(x,t)\xi(x,t)^p\zeta_{\epsilon}(t)$. Using \cite[Lemma $2.9$]{KLin}, the first integral in the above estimate \eqref{sest} goes to zero as $m\to 0$. 
This implies $\lim_{m\to 0}S_m^{\epsilon}=S^{\epsilon}$. Letting $\epsilon\to 0$, by the Lebesgue dominated convergence theorem, we obtain 
\begin{equation}\label{sest1}
\lim_{\epsilon\to 0}\lim_{m\to 0}S_m^{\epsilon}=S,
\end{equation}
where
$$
S=\int_{\theta_1}^{\theta_2}\int_{B_r}g(x,t,u)w_+(x,t)\xi(x,t)^p\,dx\,dt.
$$
Using Young's inequality with exponents $l$ and $l'$ in \eqref{ghypo}, for some positive constant $C=C(l,\alpha)$, we obtain
$$
g(x,t,u)w_+(x,t)\leq C\big(w_+(x,t)^l+|u|^l\chi_{\{u\geq k\}}(x,t)+|h(x,t)|^{l^{'}}\chi_{\{u\geq k\}}(x,t)\big).
$$
Therefore, we have
\begin{equation}\label{sest2}
S\leq C\int_{\theta_1}^{\theta_2}\int_{B_r}\big(|u|^l+|h|^{l'}+w_+^{l}\big)\chi_{\{u\geq k\}}\xi^p\,dx\,dt,
\end{equation}
for some positive constant $C=C(l,\alpha)$.
From \eqref{sest1} and \eqref{sest2}, we have
\begin{equation}\label{sest3}
\lim_{\epsilon\to 0}\lim_{m\to 0}S_m^{\epsilon}\leq C\int_{\theta_1}^{\theta_2}\int_{B_r}\big(|u|^l+|h|^{l'}+w_+^{l}\big)\chi_{\{u\geq k\}}\xi^p\,dx\,dt,
\end{equation}
for some positive constant $C=C(l,\alpha)$. Combining the estimates \eqref{Ihpre}, \eqref{llimit}, \eqref{jlimit} and \eqref{sest3} in \eqref{mollified}, we obtain
\begin{equation}\label{eng1eqn1}
\begin{split}
&\int_{B_r}\zeta_+(u(x,\theta_2),k)\xi(x,\theta_2)^p\,dx+\int_{\theta_1}^{\theta_2}\int_{B_r}|\nabla w_+|^p\xi^p\,dx\,dt\\
&\leq C\bigg(\int_{\theta_1}^{\theta_2}\int_{B_r}w_+^p|\nabla\xi|^p\,dx\,dt
+\int_{\theta_1}^{\theta_2}\int_{B_r}\int_{B_r}\max\{w_+(x,t),w_+(y,t)\}^p|\xi(x,t)-\xi(y,t)|^p\,d\mu\,dt\\
&\qquad+\esssup_{(x,t)\in\mathrm{supp}\zeta,\,\theta_1<t<\theta_2}\int_{\mathbb{R}^N\setminus B_r}\frac{w_+(y,t)^{p-1}}{|x-y|^{N+ps}}\,dy\int_{\theta_1}^{\theta_2}\int_{B_r}w_+\xi^p\,dx\,dt+\int_{\theta_1}^{\theta_2}\int_{B_r}\zeta_+(u,k)\partial_t\xi^p\,dx\,dt\\
&\qquad+\int_{B_r}\zeta_+(u(x,\theta_1),k)\xi(x,\theta_1)^p\,dx+\int_{\theta_1}^{\theta_2}\int_{B_r}\big(|u|^l+|h|^{l'}+w_+^{l}\big)\chi_{\{u\geq k\}}\xi^p\,dx\,dt\bigg),
\end{split}
\end{equation}
whenever $t_1<\theta_1<\theta_2<t_2$, for some constant $C=C(p,\Lambda,C_1,C_2,C_3,C_4,l,\alpha)>0$. Letting $\theta_1\to t_1$ in \eqref{eng1eqn1} gives
\begin{equation}\label{eng1eqn12}
\begin{split}
&\int_{B_r}\zeta_+(u(x,\theta_2),k)\xi(x,\theta_2)^p\,dx+\int_{t_1}^{\theta_2}\int_{B_r}|\nabla w_+|^p\xi^p\,dx\,dt\\
&\leq C\bigg(\int_{t_1}^{t_2}\int_{B_r}w_+^p|\nabla\xi|^p\,dx\,dt+\int_{t_1}^{t_2}\int_{B_r}\int_{B_r}\max\{w_+(x,t),w_+(y,t)\}^p|\xi(x,t)-\xi(y,t)|^p\,d\mu\, dt\\
&\qquad+\esssup_{(x,t)\in\mathrm{supp}\zeta,\,t_1<t<t_2}\int_{\mathbb{R}^N\setminus B_r}\frac{w_+(y,t)^{p-1}}{|x-y|^{N+ps}}\,dy\int_{t_1}^{t_2}\int_{B_r}w_+\xi^p\,dx\,dt+\int_{t_1}^{t_2}\int_{B_r}\zeta_+(u,k)|\partial_t\xi^p|\,dx\, dt\\
&\qquad+\int_{B_r}\zeta_+(u(x,t_1),k)\xi(x,t_1)^p\,dx+\int_{t_1}^{t_2}\int_{B_r}\big(|u|^l+|h|^{l'}+w_+^{l}\big)\chi_{\{u\geq k\}}\xi^p\,dx\,dt\bigg),
\end{split}
\end{equation}
for some constant $C=C(p,\Lambda,C_1,C_2,C_3,C_4,l,\alpha)>0$.

Since $\zeta_+$ and $\xi$ are nonnegative, ignoring the second integral on the left hand side of \eqref{eng1eqn12} and then taking essential supremum with respect to $\theta_2\in(t_1,t_2)$, we arrive at
\begin{equation}\label{eng1eqn2}
\begin{split}
&\esssup_{\tau_1-\tau<t<\tau_2}\int_{B_r}\zeta_+(u,k)\xi^p\,dx
\leq C\bigg(\int_{t_1}^{t_2}\int_{B_r}w_+^p|\nabla\xi|^p\,dx\, dt\\
&\qquad+\int_{t_1}^{t_2}\int_{B_r}\int_{B_r}\max\{w_+(x,t),w_+(y,t)\}^p|\xi(x,t)-\xi(y,t)|^p\,d\mu\, dt\\
&\qquad+\esssup_{(x,t)\in\mathrm{supp}\zeta,\,t_1<t<t_2}\int_{\mathbb{R}^N\setminus B_r}\frac{w_+(y,t)^{p-1}}{|x-y|^{N+ps}}\,dy\int_{\theta_1}^{\theta_2}\int_{B_r}w_+\xi^p\,dx\,dt+\int_{t_1}^{t_2}\int_{B_r}\zeta_+(u,k)|\partial_t\xi^p|\,dx\, dt\\
&\qquad+\int_{B_r}\zeta_+(u(x,t_1),k)\xi(x,t_1)^p\,dx+\int_{t_1}^{t_2}\int_{B_r}\big(|u|^l+|h|^{l'}+w_+^{l}\big)\chi_{\{u\geq k\}}\xi^p\,dx\, dt\bigg),
\end{split}
\end{equation}
for some constant $C=C(p,\Lambda,C_1,C_2,C_3,C_4,l,\alpha)>0$. Next, discarding the first integral in \eqref{eng1eqn12} and then we let $\theta_2\to t_2$ in \eqref{eng1eqn12} to obtain
\begin{equation}\label{eng1eqn23}
\begin{split}
&\int_{t_1}^{t_2}\int_{B_r}|\nabla w_+|^p\xi^p\,dx\, dt
\leq C\bigg(\int_{t_1}^{t_2}\int_{B_r}w_+^p|\nabla\xi|^p\,dx\,dt\\
&\qquad+\int_{t_1}^{t_2}\int_{B_r}\int_{B_r}\max\{w_+(x,t),w_+(y,t)\}^p|\xi(x,t)-\xi(y,t)|^p\,d\mu\, dt\\
&\qquad+\esssup_{(x,t)\in\mathrm{supp}\zeta,\,t_1<t<t_2}\int_{\mathbb{R}^N\setminus B_r}\frac{w_+(y,t)^{p-1}}{|x-y|^{N+ps}}\,dy\int_{\theta_1}^{\theta_2}\int_{B_r}w_+\xi^p\,dx\,dt+\int_{t_1}^{t_2}\int_{B_r}\zeta_+(u,k)|\partial_t\xi^p|\,dx\,dt\\
&\qquad+\int_{B_r}\zeta_+(u(x,t_1),k)\xi(x,t_1)^p\,dx+\int_{t_1}^{t_2}\int_{B_r}\big(|u|^l+|h|^{l'}+w_+^{l}\big)\chi_{\{u\geq k\}}\xi^p\,dx\,dt\bigg),
\end{split}
\end{equation}
for some constant $C=C(p,\Lambda,C_1,C_2,C_3,C_4,l,\alpha)>0$. 
From \eqref{eng1eqn2} and \eqref{eng1eqn23} we conclude that the estimate \eqref{eng1eqn} holds. Hence the result follows.
\end{proof}

Next, we obtain some auxiliary results using the energy estimate in Lemma \ref{eng1}. Before stating them, let us define the following parameters.
Let $(x_0,t_0)\in\Om_T$, $r\in(0,1)$ be such that the parabolic cylinder $\mathcal{Q}_r=\mathcal{Q}_r(x_0,t_0)=B_r(x_0)\times(t_0-r^p,t_0+r^p)\Subset\Omega_T$. 
For $\frac{1}{2}\leq\theta<1$ we define
\begin{equation}\label{rad}
r_j=\theta r+(1-\theta)2^{-j}r
\quad\text{and}\quad
\hat{r}_j=\frac{r_j+r_{j+1}}{2},\quad j\in\mathbb{N}\cup\{0\}.
\end{equation}
Note that 
\begin{equation}\label{rp}
r_{j+1}<\hat{r}_j<r_j,\quad j\in\mathbb{N}\cup\{0\}.
\end{equation}
We denote
\begin{equation}\label{bl}
B_j=B_{r_j}(x_0),\quad\hat{B}_j=B_{\hat{r}_j}(x_0),
\end{equation}

\begin{equation}\label{tm}
\Gamma_j=(t_0-r_j^{p},t_0+r_j^{p}),\quad\hat{\Gamma}_j=(t_0-\hat{r}_j^p,t_0+\hat{r}_j^{p})
\end{equation}
and
\begin{equation}\label{cyll}
\mathcal{Q}_j=B_j\times\Gamma_j,\quad\hat{\mathcal{Q}}_j=\hat{B}_j\times\hat{\Gamma}_j,
\quad j\in\mathbb{N}\cup\{0\}.
\end{equation}
By \eqref{rp} we obtain
\begin{equation}\label{bp}
B_{j+1}\subset \hat{B}_j\subset B_j,\quad \Gamma_{j+1}\subset\hat{\Gamma}_{j}\subset\Gamma_j,
\quad
j\in\mathbb{N}\cup\{0\}
\end{equation}
and therefore, we have
\begin{equation}\label{scyl}
\mathcal{Q}_{j+1}\subset\hat{\mathcal{Q}}_j\subset\mathcal{Q}_j,
\quad j\in\mathbb{N}\cup\{0\}.
\end{equation}
Next, we set
\begin{equation}\label{k}
k_j=(1-2^{-j})\hat{k},\quad\hat{k}_j=\frac{k_j+k_{j+1}}{2},\quad j\in\mathbb{N}\cup\{0\},
\end{equation}
for
\begin{equation}\label{k1}
\hat{k}\geq\frac{1}{2}\mathrm{Tail}_{\infty}(u_+;x_0,\theta r,t_0-r^p,t_0+r^p),
\end{equation}
where $\mathrm{Tail}_\infty$ is defined in \eqref{taildef}. Finally, we define the functions
\begin{equation}\label{ct}
w_j=(u-k_j)_+,\quad\hat{w}_j=(u-\hat{k}_j)_+,\quad j\in\mathbb{N}\cup\{0\}.
\end{equation}

We apply Lemma \ref{eng1} to obtain the following result.
\begin{Lemma}\label{eng1app}
Let $1<p<\infty,\,0<s<1$ and $u\in L^p_{\mathrm{loc}}\big(0,T;W^{1,p}_{\mathrm{loc}}(\Om)\big)\cap C_{\mathrm{loc}}\big(0,T;L^2_{\mathrm{loc}}(\Om)\big)\cap L^\infty_{\mathrm{loc}}\big(0,T;L^{p-1}_{ps}(\mathbb{R}^N)\big)$ be a weak subsolution of \eqref{maineqn} in $\Om_T$. Assume that $(x_0,t_0)\in\Omega_T$ and $r\in(0,1)$ such that $\mathcal{Q}_r=\mathcal{Q}_r(x_0,t_0)=B_r(x_0)\times(t_0-r^p,t_0+r^p)\Subset\Omega_T$. Suppose that $g$ satisfies \eqref{ghypo} for some $\alpha\geq 0$, $1<l\le\max\{2,p(1+\frac{2}{N})\}$ and $h\in L^{\gamma l'}_{\mathrm{loc}}(\Omega_T)$ for some $\gamma>\frac{N+p}{p}$.
For $j\in\mathbb{N}\cup\{0\}$, let $B_j,\hat{B}_j,\Gamma_j,\hat{\Gamma}_j,\mathcal{Q}_j,\hat{\mathcal{Q}}_j$ be given by \eqref{bl}-\eqref{cyll} and $k_j,\hat{k}_j,\hat{k},w_j,\hat{w}_j$ are given by \eqref{k}-\eqref{ct}. Assume that $\frac{1}{2}\leq\theta<1$. Then for any $q\geq\max\{p,2,l\}$ and for any $j\in\mathbb{N}\cup\{0\}$, there exists a positive constant $C=C(N,p,q,s,\Lambda,C_1,C_2,C_3,C_4,l,\alpha,h)$ such that
\begin{equation}\label{eng1appeqn}
\begin{split}
&\esssup_{t\in\hat{\Gamma}_{j}}\int_{\hat{B}_{j}}\hat{w}_j^{2}\,dx+\int_{\hat{\Gamma}_{j}}\int_{\hat{B}_{j}}|\nabla\hat{w}_j|^p\,dx\,dt\\
&\leq\frac{C}{r^p}\Bigg[\frac{1}{\theta^p(1-\theta)^{N+p}}+\frac{1}{(1-\theta)^p}\Bigg]\Bigg[\frac{2^{(p+q-2)j}}{\hat{k}^{q-2}}+\frac{2^{(N+p+q-1)j}}{\hat{k}^{q-p}}\Bigg]\int_{\Gamma_j}\int_{B_j}w_j^{q}\,dx\,dt\\
&\qquad+C\frac{2^{qj}}{\hat{k}^{q-l}}\int_{\Gamma_j}\int_{B_j}w_j^{q}\,dx\,dt
+C\frac{2^\frac{q(N+p\kappa_0)j}{N+p}}{\hat{k}^\frac{q(N+p\kappa_0)}{N+p}}\bigg(\int_{\Gamma_j}\int_{B_j}w_j^{q}\,dx\,dt\bigg)^\frac{N+p\kappa_0}{N+p},
\end{split}
\end{equation}
where $\kappa_0=1-\frac{N+p}{p\gamma}\in(0,1]$.
\end{Lemma}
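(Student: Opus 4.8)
The plan is to apply the energy estimate of Lemma~\ref{eng1} on the ball $B_j=B_{r_j}(x_0)$ with level $k=\hat k_j$, so that $w_+=(u-\hat k_j)_+=\hat w_j$, and then to dominate every term on the right-hand side of \eqref{eng1eqn} by $\int_{\Gamma_j}\int_{B_j}w_j^{q}\,dx\,dt$. As test function I would use $\xi(x,t)=\psi(x)\eta(t)$ with $\psi\in C_c^\infty(B_j)$, $0\le\psi\le1$, $\psi\equiv1$ on $\hat B_j$, $\operatorname{supp}\psi\subseteq\overline{B_{\rho_j}(x_0)}$ for $\rho_j=\tfrac12(\hat r_j+r_j)$, and $|\nabla\psi|\le C2^{j}/((1-\theta)r)$; keeping $\operatorname{supp}\psi$ strictly inside $B_j$ (possible since $r_j-\hat r_j\simeq(1-\theta)2^{-j}r$) is precisely what will later make the nonlocal tail manageable. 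For the time cutoff, take $\eta\in C^\infty(\R)$ with $0\le\eta\le1$, $\eta\equiv1$ on $\hat\Gamma_j$, $\operatorname{supp}\eta\subseteq\overline{\Gamma_j}$ and $\eta\equiv0$ on $(-\infty,t_0-r_j^{p}]$; then, using $r_j^{p}-\hat r_j^{p}\ge(r_j-\hat r_j)^{p}$ (valid for $p\ge1$), $|\partial_t\eta|\le C2^{pj}/((1-\theta)^{p}r^{p})$. Since $\mathcal{Q}_r\Subset\Om_T$, one may fix a slightly larger cylinder $B_j\times(\tau_1-\tau,\tau_2)\Subset\Om_T$ with $\Gamma_j\subseteq(\tau_1-\tau,\tau_2)$, so Lemma~\ref{eng1} applies, and by the choice of $\eta$ the initial-time term $\int_{B_j}\zeta_+(u(\cdot,\tau_1-\tau),\hat k_j)\xi(\cdot,\tau_1-\tau)^{p}\,dx$ in \eqref{eng1eqn} vanishes. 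On the left-hand side, $\zeta_+(u,\hat k_j)\ge\lambda^{-1}\hat w_j^{2}$ from Lemma~\ref{Auxfnlemma} and $\xi\equiv1$ on $\hat{\mathcal{Q}}_j$ reproduce the two quantities on the left of \eqref{eng1appeqn}.

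The algebraic device that drives everything is: on $\{u\ge\hat k_j\}$ one has $w_j=u-k_j\ge\hat k_j-k_j=2^{-j-2}\hat k$, hence $\chi_{\{u\ge\hat k_j\}}\le(2^{j+2}w_j/\hat k)^{\sigma}$ pointwise for any $\sigma\ge0$, while $\hat w_j\le w_j$ everywhere; since $q\ge\max\{p,2,l\}$ the exponents $\sigma=q-p,\,q-2,\,q-l,\,q-1,\,q$ used below are all nonnegative. With this, the gradient-cutoff term $\int_{\Gamma_j}\int_{B_j}\hat w_j^{p}|\nabla\xi|^{p}$ and — after the standard split yielding $\int_{B_j}|\psi(x)-\psi(y)|^{p}|x-y|^{-N-ps}\,dy\le C\|\nabla\psi\|_\infty^{ps}$ and then $r<1$, $ps<p$ — the nonlocal-cutoff term are each at most $C2^{pj}(1-\theta)^{-p}r^{-p}\int_{\Gamma_j}\int_{B_j}\hat w_j^{p}\le C2^{qj}(1-\theta)^{-p}r^{-p}\hat k^{-(q-p)}\int_{\Gamma_j}\int_{B_j}w_j^{q}$; the time-derivative term $\int_{\Gamma_j}\int_{B_j}\zeta_+(u,\hat k_j)|\partial_t\xi^{p}|$ is, via $\zeta_+\le\lambda\hat w_j^{2}$ and the bound on $|\partial_t\eta|$, at most $C2^{(p+q-2)j}(1-\theta)^{-p}r^{-p}\hat k^{-(q-2)}\int_{\Gamma_j}\int_{B_j}w_j^{q}$; and the source term, using $|u|\le\hat w_j+\hat k\le w_j+\hat k$ on $\{u\ge\hat k_j\}$ so that $|u|^{l}\le C(w_j^{l}+\hat k^{l})$, contributes (with $\sigma=q-l$ for the $w_j^{l},\hat w_j^{l}$ parts and $\sigma=q$ for the $\hat k^{l}$ part) at most $C2^{qj}\hat k^{-(q-l)}\int_{\Gamma_j}\int_{B_j}w_j^{q}$.

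The two contributions requiring genuine care are the nonlocal tail and the term with $h$. For the tail, since $\operatorname{supp}\psi\subseteq\overline{B_{\rho_j}}$ with $\rho_j<r_j$, for $x\in\operatorname{supp}\psi$ and $y\notin B_j$ one has $|x-y|\ge|y-x_0|-\rho_j\ge c(1-\theta)2^{-j}|y-x_0|$ (using $|y-x_0|\ge r_j$, $r_j-\rho_j\simeq(1-\theta)2^{-j}r$, $r_j\le r$); hence, bounding $\hat w_j\le u_+$, using $\R^N\setminus B_j\subseteq\R^N\setminus B_{\theta r}$, the definition \eqref{taildef}, and the standing hypothesis \eqref{k1}, one gets $\esssup_{x\in\operatorname{supp}\psi,\,t\in\Gamma_j}\int_{\R^N\setminus B_j}|x-y|^{-N-ps}\hat w_j(y,t)^{p-1}\,dy\le C2^{(N+ps)j}(1-\theta)^{-(N+ps)}(\theta r)^{-p}\hat k^{p-1}$; multiplying by $\int_{\Gamma_j}\int_{B_j}\hat w_j\xi^{p}\le C2^{(q-1)j}\hat k^{-(q-1)}\int_{\Gamma_j}\int_{B_j}w_j^{q}$ and using $ps<p$ gives the bound $C\theta^{-p}(1-\theta)^{-(N+p)}2^{(N+p+q-1)j}r^{-p}\hat k^{-(q-p)}\int_{\Gamma_j}\int_{B_j}w_j^{q}$. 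For the $h$-term, Hölder's inequality with exponents $\gamma,\gamma'$ gives $\int_{\Gamma_j}\int_{B_j}|h|^{l'}\chi_{\{u\ge\hat k_j\}}\xi^{p}\le\|h\|_{L^{\gamma l'}(\mathcal{Q}_r)}^{l'}\bigl|\{u\ge\hat k_j\}\cap\mathcal{Q}_j\bigr|^{1-1/\gamma}$, and estimating $\bigl|\{u\ge\hat k_j\}\cap\mathcal{Q}_j\bigr|\le(2^{j+2}/\hat k)^{q}\int_{\Gamma_j}\int_{B_j}w_j^{q}$ yields $C2^{q(1-1/\gamma)j}\hat k^{-q(1-1/\gamma)}\bigl(\int_{\Gamma_j}\int_{B_j}w_j^{q}\bigr)^{1-1/\gamma}$; since $1-\tfrac1\gamma=\tfrac{N+p\kappa_0}{N+p}$ with $\kappa_0=1-\tfrac{N+p}{p\gamma}\in(0,1]$ (the membership because $\gamma>\tfrac{N+p}{p}$), this is exactly the last term of \eqref{eng1appeqn}.

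Collecting all pieces, absorbing $2^{qj}$ into $2^{(N+p+q-1)j}$ and the powers of $\theta,1-\theta$ into $\theta^{-p}(1-\theta)^{-(N+p)}+(1-\theta)^{-p}$, and expanding the product of the two resulting brackets, one arrives at precisely \eqref{eng1appeqn}. I expect the main difficulty to be organizational rather than conceptual — Lemma~\ref{eng1} does all the analytic work — namely carrying out the tail comparison with the right geometric constant and the correct invocation of \eqref{k1}, and matching every power of $2^{j}$ and $1/\hat k$ so that the separate estimates telescope into the stated three-term bound.
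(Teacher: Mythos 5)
Your proposal is correct and follows essentially the same route as the paper: apply the energy estimate of Lemma~\ref{eng1} on $B_j\times\Gamma_j$ at level $\hat k_j$ with the product cutoff $\psi_j\eta_j$, invoke the pointwise comparison $\hat w_j^\lambda\chi_{\{u\ge\hat k_j\}}\le C\,2^{(q-\lambda)j}\hat k^{\lambda-q}w_j^{q}$ (the paper's \eqref{wp}, your $\sigma$-device), estimate the nonlocal tail by pushing $\operatorname{supp}\psi_j$ a distance $\simeq(1-\theta)2^{-j}r$ away from $\partial B_j$ and comparing to $\mathrm{Tail}_\infty$ over $\R^N\setminus B_{\theta r}$ via \eqref{k1}, and treat the $h$-term by H\"older with exponents $\gamma,\gamma'$ using $1/\gamma'=(N+p\kappa_0)/(N+p)$. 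All powers of $2^j$, $\hat k$, $\theta$ and $1-\theta$ match the paper's computations term by term, so no gap.
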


\begin{proof}
Noting that $\hat{k}_j>k_j$ and as in \cite[Lemma $4.1$, p. $25$]{DZZ}, for any $0\leq\lambda<q$ where $q\geq\max\{p,2,l\}$, we get
\begin{equation}\label{wp}
\hat{w}_j\leq w_j\quad\text{and}\quad\hat{w}_j^{\lambda}\leq C\hat{k}^{\lambda-q}2^{(q-\lambda)j}w_j^{q}\text{ in }\Omega_T,
\end{equation} 
for some constant $C=C(\lambda,q)>0$. Let $\xi_j(x,t)=\psi_j(x)\eta_j(t)$, where $\psi_j\in C_c^{\infty}(B_j)$ and $\eta_j\in C_c^{\infty}(\Gamma_j)$ such that
\begin{equation}\label{ct1}
\begin{split}
&0\leq\psi_j\leq 1\text{ in }B_j,\quad|\nabla\psi_j|\leq C\frac{2^j}{(1-\theta)r}\text{ in }B_j,\\
&\psi_j\equiv 1\text{ in }\hat{B}_{j},\quad\mathrm{dist}(\mathrm{supp}\,\psi_j,\mathbb{R}^N\setminus B_j)\geq C\frac{(1-\theta)r}{2^j}
\end{split}
\end{equation}
and
\begin{equation}\label{ct2}
\begin{split}
&0\leq\eta_j\leq 1\text{ in }\Gamma_j,\quad|\partial_t\eta_j|\leq C\frac{2^{pj}}{(1-\theta)^p r^p}\text{ in }\Gamma_j,\quad\eta_j\equiv 1\text{ in }\hat{\Gamma}_j,
\end{split}
\end{equation}
for some positive constant $C=C(N,p)$. Noting Lemma \ref{Auxfnlemma} and setting $r=r_j,\tau_1=t_0-\hat{r}_{j}^p,\tau_2=t_0+r_{j}^p$, $\tau=r_j^{p}-\hat{r}_{j}^p$ in Lemma \ref{eng1}, we obtain
\begin{equation}\label{eng1equation}
\begin{split}
&\esssup_{t\in\hat{\Gamma}_j}\int_{\hat{B}_j}\hat{w}_j^{2}\,dx
+\int_{\hat{\Gamma}_j}\int_{\hat{B}_j}|\nabla\hat{w}_j|^p\,dx\,dt
\leq I_0+I_1+I_2+I_3+I_4+I_5,
\end{split}
\end{equation}
where
\begin{equation*}
\begin{split}
I_0&=C\int_{\Gamma_j}\int_{B_j}\hat{w}_j^p|\nabla\psi_j|^p\eta_j^{p}\,dx\,dt,\\
I_1&=C\int_{\Gamma_j}\int_{B_j}\int_{B_j}\max\{\hat{w}_j(x,t),\hat{w}_j(y,t)\}^p|\psi_j(x)-\psi_j(y)|^p\eta_{j}^p\,d\mu\,dt,\\
I_2&=C\esssup_{x\in\mathrm{supp}\psi_j,\,t\in\Gamma_j}\int_{\mathbb{R}^N\setminus B_j}\frac{\hat{w}_j(y,t)^{p-1}}{|x-y|^{N+ps}}\,dy\int_{\Gamma_j}\int_{B_j}\hat{w}_j\psi_j^{p}\eta_j^{p}\,dx\,dt,\\
I_3&=C\int_{\Gamma_j}\int_{B_j}\hat{w}_j^2\psi_j^{p}|\partial_t\eta_j^p|\,dx\,dt,\\
I_4&=C\int_{\Gamma_j}\int_{B_j}\big(|u|^l\chi_{\{u\geq \hat{k}_j\}}+ \hat{w}_j^{l}\big)\psi_j^{p}\eta_j^{p}\,dx\,dt
\quad\text{and}\\
I_5&=C\int_{\Gamma_j}\int_{B_j}|h|^{l'}\chi_{\{u\geq \hat{k}_j\}}\psi_j^{p}\eta_j^{p}\,dx\,dt
\end{split}
\end{equation*}
for some constant $C=C(p,\Lambda,C_1,C_2,C_3,C_4,l,\alpha)>0$.\\
\textbf{Estimate of $I_0$:} Using \eqref{wp} with $\lambda=p$ we obtain $\hat{w}_j^{p}\leq C\hat{k}^{p-q}2^{(q-p)j}w_j^{q}$ in $\mathcal{Q}_j$ for some constant $C=C(p,q)>0$. Using this fact along with the properties of $\psi_j$ and $\eta_j$ from \eqref{ct1} and \eqref{ct2}, for some constant $C=C(N,p,q,s,\Lambda,C_1,C_2,C_3,C_4,l,\alpha,h)>0$, we have
\begin{equation}\label{I0}
\begin{split}
I_0&=C\int_{\Gamma_j}\int_{B_j}\hat{w}_j^p|\nabla\psi_j|^p\eta_j^{p}\,dx\,dt
\leq C\frac{2^{pj}}{(1-\theta)^p r^p}\int_{\Gamma_j}\int_{B_j}\hat{w}_j^{p}\,dx\,dt\\
&\leq C\frac{2^{qj}}{\hat{k}^{q-p}(1-\theta)^p r^p}\int_{\Gamma_j}\int_{B_j}w_j^{q}\,dx\,dt.
\end{split}
\end{equation}
\textbf{Estimate of $I_1$:} Choosing $\lambda=p$ in \eqref{wp}, we obtain $\hat{w}_j^p\leq C\hat{k}^{p-q}2^{(q-p)j}w_j^{q}$ in $\mathcal{Q}_j$ for some constant $C(p,q)>0$. Using this estimate along with the properties of $\psi_j$ and $\eta_j$ from \eqref{ct1} and \eqref{ct2}, for some constant $C=C(N,p,q,s,\Lambda,C_1,C_2,C_3,C_4,l,\alpha,h)>0$, we have
\begin{equation}\label{I1}
\begin{split}
I_1&=C\int_{\Gamma_j}\int_{B_j}\int_{B_j}\max\{\hat{w}_j(x,t),\hat{w}_j(y,t)\}^p|\psi_j(x)-\psi_j(y)|^p\eta_{j}^p\,d\mu\,dt\\
&\leq C\frac{2^{pj}}{(1-\theta)^p r^p}\esssup_{x\in B_j}\int_{B_j}\frac{dy}{|x-y|^{N+ps-p}}\int_{\Gamma_j}\int_{B_j}\hat{w}_j^p\,dx\,dt\\
&\leq C\frac{2^{qj}}{\hat{k}^{q-p}(1-\theta)^pr^{ps}}\int_{\Gamma_j}\int_{B_j}w_j^q\,dx\,dt\leq C\frac{2^{qj}}{\hat{k}^{q-p}(1-\theta)^p r^{p}}\int_{\Gamma_j}\int_{B_j}w_j^q\,dx\,dt,
\end{split}
\end{equation}
where to deduce the last line above, we have also used the fact that $r\in(0,1)$.\\
\textbf{Estimate of $I_2$:} Without loss of generality, we assume that $x_0=0$. For $x\in\mathrm{supp}\,\psi_j$ and $y\in\mathbb{R}^N\setminus B_j$, we have
$$
\frac{1}{|x-y|}=\frac{1}{|y|}\frac{|y|}{|x-y|}\leq\frac{1}{|y|}\frac{|x|+|x-y|}{|x-y|}\leq \frac{1}{|y|}\frac{2^{j+4}}{(1-\theta)}.
$$
This implies
\begin{equation}\label{teqn}
\begin{split}
\esssup_{x\in\mathrm{supp}\,\psi_j,\,t\in\Gamma_j}\int_{\mathbb{R}^N\setminus B_j}\frac{\hat{w}_j(y,t)^{p-1}}{|x-y|^{N+ps}}\,dy&\leq
C\frac{2^{j(N+ps)}}{(1-\theta)^{N+ps}}\esssup_{t\in\Gamma_j}\int_{\mathbb{R}^N\setminus B_j}\frac{\hat{w}_j(y,t)^{p-1}}{|y|^{N+ps}}\,dy\\
&\leq C\frac{2^{j(N+ps)}}{(1-\theta)^{N+ps}}\esssup_{t\in\Gamma_j}\int_{\mathbb{R}^N\setminus B_{\theta r}}\frac{\hat{w}_j(y,t)^{p-1}}{|y|^{N+ps}}\,dy\\ &\leq C\frac{2^{j(N+p)}}{r^p\theta^p(1-\theta)^{N+p}}\mathrm{Tail}_{\infty}(u_+;x_0,\theta r,t_0-r^{p},t_0+r^{p})^{p-1},
\end{split}
\end{equation}
for some constant $C=C(N,p,s)>0$. Again using \eqref{wp} with $\lambda=1$, we obtain $\hat{w}_j\leq C\hat{k}^{1-q}2^{(q-1)j}w_j^{q}$ in $\mathcal{Q}_j$ for some constant $C=C(q)>0$. This fact along with \eqref{teqn} and the choice of $\hat{k}$ in \eqref{k1}, for some constant $C=C(N,p,q,s,\Lambda,C_1,C_2,C_3,C_4,l,\alpha,h)>0$ gives us 
\begin{equation}\label{I2}
\begin{split}
I_2&=\esssup_{x\in\mathrm{supp}\,\psi_j,\,t\in\hat{\Gamma}_j}\int_{\mathbb{R}^N\setminus B_j}\frac{\hat{w}_j(y,t)^{p-1}}{|x-y|^{N+ps}}\,dy\int_{\Gamma_j}\int_{B_j}\hat{w}_j\psi_j^{p}\eta_j^{p}\,dx\, dt\\
&\leq C\frac{2^{j(N+p+q-1)}}{r^p\theta^p(1-\theta)^{N+p}\hat{k}^{q-p}}\int_{\Gamma_j}\int_{B_j}w_j^{q}\,dx\,dt. 
\end{split}
\end{equation}
\textbf{Estimate of $I_3$:} Again, using \eqref{wp} for $\lambda=2$, we get $\hat{w}_j^{2}\leq C\hat{k}^{2-q}2^{(q-2)j}w_j^{q}$ in $\mathcal{Q}_j$ for some constant $C=C(q)>0$. 
Using the properties of $\psi_j,\eta_j$ for a constant $C=C(N,p,q,s,\Lambda,C_1,C_2,C_3,\\C_4,l,\alpha,h)>0$, we have
\begin{equation}\label{I3}
\begin{split}
I_3&=\int_{\Gamma_j}\int_{B_j}\hat{w}_j^2\psi_j^{p}|\partial_t\eta_j^p|\,dx\,dt
\leq C\frac{2^{pj}}{(1-\theta)^p r^p}\int_{\Gamma_j}\int_{B_j}\hat{w}_j^{2}\,dx\,dt\\
&\leq C\frac{2^{(p+q-2)j}}{(1-\theta)^p r^p \hat{k}^{q-2}}\int_{\Gamma_j}\int_{B_j}w_j^{q}\,dx\,dt. 
\end{split}
\end{equation}
\textbf{Estimate of $I_4$:} From the proof of \cite[Lemma $4.1$, p. $27$]{DZZ}, we have
\begin{equation}\label{I4}
\begin{split}
I_4&=\int_{\Gamma_j}\int_{B_j}\big(|u|^l\chi_{\{u\geq \hat{k}_j\}}+\hat{w}_j^{l}\big)\psi_j^{p}\eta_j^{p}\,dx\,dt\leq C\frac{2^{qj}}{\hat{k}^{q-l}}\int_{\Gamma_j}\int_{B_j}w_j^{q}\,dx\,dt,
\end{split}
\end{equation}
for a positive constant $C=C(N,p,q,s,\Lambda,C_1,C_2,C_3,C_4,l,\alpha,h)$.\\
\textbf{Estimate of $I_5$:} Using H\"older's inequality with exponents $\gamma$ and $\gamma'$ along with \eqref{wp}, for a constant $C=C(N,p,q,s,\Lambda,C_1,C_2,C_3,C_4,l,\alpha,h)>0$, we have
\begin{equation}\label{I5}
\begin{split}
I_5&=\int_{\Gamma_j}\int_{B_j}|h|^{l'}\chi_{\{u\geq \hat{k}_j\}}\psi_j^{p}\eta_j^{p}\,dx\,dt
\leq C\||h|^{l^{'}}\|_{L^\gamma(\mathcal{Q}_0)}\frac{2^\frac{qj}{\gamma'}}{\hat{k}^\frac{q}{\gamma'}}\bigg(\int_{\Gamma_j}\int_{B_j}w_j^{q}\,dx\,dt\bigg)^\frac{1}{\gamma'}\\
&\leq C\frac{2^\frac{q(N+p\kappa_0)j}{N+p}}{\hat{k}^\frac{q(N+p\kappa_0)}{N+p}}\Big(\int_{\Gamma_j}\int_{B_j}w_j^{q}\,dx\,dt\Big)^\frac{N+p\kappa_0}{N+p},
\end{split}
\end{equation}
where $\kappa_0=1-\frac{p+N}{p\gamma}\in(0,1]$, since $\gamma>\frac{N+p}{p}$. 
The estimate \eqref{eng1appeqn} follows by combining \eqref{I0}, \eqref{I1}, \eqref{I2}, \eqref{I3}, \eqref{I4} and \eqref{I5} in \eqref{eng1equation}.
\end{proof}

\begin{Lemma}\label{eng1app2}
Let $\frac{2N}{N+2}<p<\infty,\,0<s<1$ and $u\in L^p_{\mathrm{loc}}\big(0,T;W^{1,p}_{\mathrm{loc}}(\Om)\big)\cap C_{\mathrm{loc}}\big(0,T;L^2_{\mathrm{loc}}(\Om)\big)\cap L^\infty_{\mathrm{loc}}\big(0,T;L^{p-1}_{ps}(\mathbb{R}^N)\big)$ be a weak subsolution of \eqref{maineqn} in $\Omega_T$. Assume that $(x_0,t_0)\in\Omega_T$ and $r\in(0,1)$ such that $\mathcal{Q}_r=\mathcal{Q}_r(x_0,t_0)=B_r(x_0)\times(t_0-r^p,t_0+r^p)\Subset\Omega_T$. Let $\max\{p,2\}\leq l<p\kappa$ where $\kappa=\frac{N+2}{N}$ and $g$ satisfies \eqref{ghypo}, where $h\in L^{\gamma l'}_{\mathrm{loc}}(\Omega_T)$ for some $\gamma>\frac{N+p}{p}$.
For $j\in\mathbb N\cup\{0\}$, let $B_j,\hat{B}_j,\Gamma_j,\hat{\Gamma}_j,\mathcal{Q}_j,\hat{\mathcal{Q}}_j$ be given by \eqref{bl}-\eqref{cyll} and $k_j,\hat{k}_j,\hat{k},w_j,\hat{w}_j$ are given by \eqref{k}-\eqref{ct}. Assume that $\frac{1}{2}\leq\theta<1$. Then for any $j\in\N\cup\{0\}$, there exists a positive constant $C=C(N,p,s,\Lambda,C_1,C_2,C_3,C_4,l,\alpha,h)$ such that
\begin{equation}\label{eng1app2eqn}
\begin{split}
&\int_{\Gamma_{j+1}}\int_{B_{j+1}}w_{j+1}^l\,dx\,dt\\
&\leq C\frac{2^{aj}}{r^\frac{\xi l}{\kappa}}\Bigg[\frac{1}{\theta^{\frac{\xi l}{\kappa}}(1-\theta)^\frac{(N+p)\xi l}{p\kappa}}+\frac{1}{(1-\theta)^\frac{\xi l}{\kappa}}\Bigg]\frac{1}{\hat{k}^{l(1-\frac{l}{p\kappa})}}\Bigg[\frac{1}{\hat{k}^{(l-2)\xi}}+\frac{1}{\hat{k}^{(l-p)\xi}}\Bigg]^\frac{l}{p\kappa}\bigg(\int_{\Gamma_j}\int_{B_j}w_j^{l}\,dx dt\bigg)^{1+\frac{l}{\kappa N}}\\
&\qquad+C\frac{2^{aj}}{\hat{k}^{l(1-\frac{l}{p\kappa})}}\bigg(\int_{\Gamma_j}\int_{B_j}w_j^{l}\,dx\,dt\bigg)^{1+\frac{l}{\kappa N}}+C\frac{2^{aj}}{\hat{k}^{{l(1-\frac{l}{p\kappa})}+\frac{l^2}{p\kappa}(1+\frac{\kappa_0 p}{N})}}\bigg(\int_{\Gamma_j}\int_{B_j}w_j^{l}\,dx\,dt\bigg)^{1+\frac{l\kappa_0}{\kappa N}},
\end{split}
\end{equation}
where $\xi=1+\frac{p}{N}$, $a=\xi(N+p+l)$ and $\kappa_0=1-\frac{p+N}{p\gamma}\in(0,1]$.
\end{Lemma}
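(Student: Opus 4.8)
The plan is a De Giorgi iteration that combines the localized energy estimate of Lemma~\ref{eng1app} (used with $q=l$) with the parabolic Sobolev inequality of Lemma~\ref{Sobo}(b). The energy estimate controls $\esssup_t\int\hat w_j^{2}$ and $\int\int|\nabla\hat w_j|^{p}$ by powers of $M_j:=\int_{\Gamma_j}\int_{B_j}w_j^{l}\,dx\,dt$; the Sobolev inequality upgrades this to an $L^{p\kappa}$-bound on $\hat w_j$; and a final H\"older/Chebyshev argument on the small superlevel set $\{u>k_{j+1}\}$ turns it into a super-linear recursion in $M_j$.

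Fix $j\in\mathbb N\cup\{0\}$ and apply Lemma~\ref{eng1app} with $q=l$, which is admissible since $\max\{p,2\}\le l$; this gives
\[
\esssup_{t\in\hat\Gamma_j}\int_{\hat B_j}\hat w_j^{2}\,dx+\int_{\hat\Gamma_j}\int_{\hat B_j}|\nabla\hat w_j|^{p}\,dx\,dt\le\mathcal E_j,
\]
where $\mathcal E_j$ is the right-hand side of \eqref{eng1appeqn} with $q=l$, a sum of three groups of terms (linear in $M_j$, a pure $2^{lj}M_j$ term, and a $\kappa_0$-fractional power of $M_j$). Next choose a spatial cutoff $\zeta_j\in C_c^\infty(\hat B_j)$ with $\zeta_j\equiv1$ on $B_{j+1}$ and $|\nabla\zeta_j|\le C2^{j}/((1-\theta)r)$, which is possible since $\hat r_j-r_{j+1}$ is comparable to $(1-\theta)r2^{-j}$ by \eqref{rad}, and set $v_j=\zeta_j\hat w_j\in L^{p}(\hat\Gamma_j;W_0^{1,p}(\hat B_j))\cap L^\infty(\hat\Gamma_j;L^2(\hat B_j))$. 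Using $|\nabla v_j|^{p}\le C(|\nabla\hat w_j|^{p}+\hat w_j^{p}|\nabla\zeta_j|^{p})$ together with \eqref{wp} for $\lambda=p$ (so that $\hat w_j^{p}\le C\hat k^{p-l}2^{(l-p)j}w_j^{l}$ on $\mathcal Q_j$) to absorb the cutoff term, one checks that $\int_{\hat\Gamma_j}\int_{\hat B_j}|\nabla v_j|^{p}\le C\mathcal E_j$ and $\esssup_{t\in\hat\Gamma_j}\int_{\hat B_j}v_j^{2}\le\mathcal E_j$. Lemma~\ref{Sobo}(b) on $\hat B_j\times\hat\Gamma_j$ with $m=2$, whose exponent is $p(1+\tfrac2N)=p\kappa$, then yields $\int_{\hat\Gamma_j}\int_{\hat B_j}v_j^{p\kappa}\,dx\,dt\le C\mathcal E_j^{\,\xi}$.

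Since $\hat k_j<k_{j+1}$ and $\zeta_j\equiv1$ on $B_{j+1}$, we have $w_{j+1}\le v_j$ on $\mathcal Q_{j+1}$ and $\{w_{j+1}>0\}\subset\{u>k_{j+1}\}$, so H\"older's inequality with exponents $\tfrac{p\kappa}{l}$ and $\tfrac{p\kappa}{p\kappa-l}$ --- admissible because $l<p\kappa$ --- gives
\[
\int_{\Gamma_{j+1}}\int_{B_{j+1}}w_{j+1}^{l}\,dx\,dt\le\Big(\int_{\hat\Gamma_j}\int_{\hat B_j}v_j^{p\kappa}\Big)^{\frac{l}{p\kappa}}\big|\{u>k_{j+1}\}\cap(\hat B_j\times\hat\Gamma_j)\big|^{\,1-\frac{l}{p\kappa}}.
\]
On $\{u>k_{j+1}\}$ one has $w_j=u-k_j>k_{j+1}-k_j=2^{-j-1}\hat k$, so Chebyshev's inequality bounds the measure by $2^{(j+1)l}\hat k^{-l}M_j$; combined with the previous paragraph this produces $\int_{\Gamma_{j+1}}\int_{B_{j+1}}w_{j+1}^{l}\,dx\,dt\le C\mathcal E_j^{\,\xi l/(p\kappa)}(2^{jl}\hat k^{-l}M_j)^{1-l/(p\kappa)}$.

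It remains to substitute the explicit form of $\mathcal E_j$, distribute the exponent $\tfrac{\xi l}{p\kappa}$ over its three groups via $(a+b+c)^{\sigma}\le C(a^{\sigma}+b^{\sigma}+c^{\sigma})$ --- valid because $\tfrac{\xi l}{p\kappa}\ge1$, which follows from $l\ge\max\{p,2\}$ --- and to collect powers of $2^{j}$, $r$, $\theta$, $1-\theta$, $\hat k$ and $M_j$. The $M_j$-linear group of $\mathcal E_j$ produces the first term of \eqref{eng1app2eqn}, carrying the bracket $[\hat k^{-(l-2)\xi}+\hat k^{-(l-p)\xi}]^{l/(p\kappa)}$ and $M_j$-exponent $1+\tfrac{(\xi-1)l}{p\kappa}=1+\tfrac{l}{\kappa N}$; the pure $2^{lj}M_j$ group produces the second term; and the $\kappa_0$-fractional group produces the third, with $M_j$-exponent $1+\tfrac{l\kappa_0}{\kappa N}$, after using $\tfrac{\xi}{N+p}=\tfrac1N$ to simplify the $\hat k$- and $M_j$-exponents. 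All the $2^{(\cdot)j}$ factors are dominated by $2^{aj}$ with $a=\xi(N+p+l)$, which is exactly \eqref{eng1app2eqn}. The only substantive work is this exponent bookkeeping; the step that needs genuine care is confirming that the gradient-of-cutoff contribution to $\int\int|\nabla v_j|^{p}$ is really dominated by $\mathcal E_j$, so that the Sobolev step delivers $\mathcal E_j^{\,\xi}$ and nothing larger.
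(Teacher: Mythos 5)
Your proposal is correct and follows essentially the same route as the paper's proof: apply Lemma~\ref{eng1app} with $q=l$, insert a cutoff supported in $\hat{\mathcal{Q}}_j$, use the parabolic Sobolev inequality of Lemma~\ref{Sobo}(b) to upgrade to an $L^{p\kappa}$-bound, and close with H\"older on the superlevel set plus a Chebyshev estimate. The only cosmetic differences are that you measure the level set at $k_{j+1}$ over $\hat{\mathcal{Q}}_j$ and invoke Chebyshev directly, whereas the paper measures at $\hat{k}_j$ over $\mathcal{Q}_{j+1}$ via the $\lambda=0$ case of \eqref{wp}; your exponent bookkeeping, including the identity $\tfrac{\xi}{N+p}=\tfrac1N$, matches the paper's derivation of \eqref{eng1app2eqn}.
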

\begin{proof}
Let $\Phi_j\in C_c^{\infty}(\hat{\mathcal{Q}}_j)$ be such that 
\begin{equation}\label{clbd}
0\leq\Phi_j\leq 1\text{ in }\hat{\mathcal{Q}}_j,\quad|\nabla\Phi_j|\leq C\frac{2^j}{(1-\theta)r},\quad\Phi_j\equiv 1\text{ in }\mathcal{Q}_{j+1},
\end{equation}
for some constant $C=C(N,p)>0$. Note that $k_{j+1}\geq\hat{k}_j$, thus, we have $w_{j+1}\leq\hat{w}_j$. Since $l<p\kappa$, where $\kappa=\frac{N+2}{N}$ and $\mathcal{Q}_{j+1}\subset\hat{\mathcal{Q}}_j$, using H\"older's inequality, we obtain
\begin{equation}\label{Semb1app2}
\begin{split}
\int_{\Gamma_{j+1}}\int_{B_{j+1}}w_{j+1}^l\,dx\,dt&\leq\int_{\Gamma_{j+1}}\int_{B_{j+1}}\hat{w}_j^{l}\,dx\,dt\\
&\leq\bigg(\int_{\Gamma_{j+1}}\int_{B_{j+1}}\hat{w}_j^{p\kappa}\,dx\,dt\bigg)^\frac{l}{p\kappa}\bigg(\int_{\Gamma_{j+1}}\int_{B_{j+1}}\chi_{\{u\geq\hat{k}_j\}}\,dx\,dt\bigg)^{1-\frac{l}{p\kappa}}\\
&\leq\bigg(\int_{\hat{\Gamma}_j}\int_{\hat{B}_j}\big(\hat{w}_j\Phi_j\big)^{p\kappa}\,dx\,dt\bigg)^\frac{l}{p\kappa}\bigg(C\frac{2^{aj}}{\hat{k}^l}\int_{\Gamma_j}\int_{B_j}w_j^{l}\,dx\,dt\bigg)^{1-\frac{l}{p\kappa}},
\end{split}
\end{equation}
for a constant $C=C(l)>0$, where $a=\xi(N+p+l)$ with $\xi=1+\frac{p}{N}$. To obtain the last line above in \eqref{Semb1app2}, we have also used the estimate
$$
\int_{\Gamma_{j+1}}\int_{B_{j+1}}\chi_{\{u\geq\hat{k}_j\}}\,dx\,dt
\leq C\frac{2^{lj}}{\hat{k}^l}\int_{\Gamma_j}\int_{B_j}w_j^{l}\,dx\,dt, 
$$
for some constant $C=C(l)>0$. This follows by choosing $\lambda=0$ and $q=l$ in \eqref{wp}, which is possible, since $l\geq\max\{p,2\}$. Since $\kappa=1+\frac{2}{N}$, by Lemma \ref{Sobo} (b), for some constant $C=C(p,N)>0$, we have
\begin{equation}\label{Semb1app1}
\int_{\hat{\Gamma}_j}\int_{\hat{B}_j}\big(\hat{w}_j\Phi_j\big)^{p\kappa}\,dx\,dt
\leq C\bigg(\int_{\hat{\Gamma}_j}\int_{\hat{B}_j}|\nabla(\hat{w}_j\Phi_j)|^p\,dx\,dt\bigg)\bigg(\esssup_{\hat{\Gamma}_j}\int_{\hat{B}_j}|\hat{w}_j\Phi_j|^2\,dx\bigg)^\frac{p}{N}.
\end{equation}
Using the properties of $\Phi_j$ and combining the estimates \eqref{Semb1app2} and \eqref{Semb1app1}, for some constant $C=C(N,p,l)>0$, we have
\begin{equation}\label{Semb1app3}
\begin{split}
\int_{\Gamma_{j+1}}\int_{B_{j+1}}w_{j+1}^l\,dx\,dt
&\leq C\big((I+\hat{I})J^\frac{p}{N}\big)^\frac{l}{p\kappa}\bigg(\frac{2^{aj}}{\hat{k}^l}\int_{\Gamma_j}\int_{B_j}w_j^{l}\,dx\,dt\bigg)^{1-\frac{l}{p\kappa}},
\end{split}
\end{equation}
where
$$
I=\int_{\hat{\Gamma}_j}\int_{\hat{B}_j}|\nabla\hat{w}_j|^p\,dx\,dt,
\quad\hat{I}=\int_{\hat{\Gamma}_j}\int_{\hat{B}_j}\hat{w}_j^{p}|\nabla\Phi_j|^p\,dx\,dt\quad\text{and}\quad
J=\esssup_{\hat{\Gamma}_j}\int_{\hat{B}_j}|\hat{w}_j|^2\,dx\,dt.
$$
\textbf{Estimate of $\hat{I}$:} Using \eqref{wp} with $\lambda=p$ we obtain $\hat{w}_j^{p}\leq C\hat{k}^{p-q}2^{(q-p)j}w_j^{q}$ in $\mathcal{Q}_j$ for some constant $C=C(p,q)>0$. Using this fact along with the property of $\Phi_j$, for some constant $C=C(N,p,s,\Lambda,C_1,C_2,C_3,C_4,l,\alpha,h)>0$, we have
\begin{equation}\label{Ihat}
\begin{split}
\hat{I}&=\int_{\Gamma_j}\int_{B_j}\hat{w}_j^p|\nabla\Phi_j|^p\,dx\,dt
\leq C\frac{2^{pj}}{(1-\theta)^p r^p}\int_{\Gamma_j}\int_{B_j}\hat{w}_j^{p}\,dx\,dt\\
&\leq C\frac{2^{qj}}{\hat{k}^{q-p}(1-\theta)^p r^p}\int_{\Gamma_j}\int_{B_j}w_j^{q}\,dx\,dt.
\end{split}
\end{equation}
Using the above estimate of $\hat{I}$ from \eqref{Ihat} and choosing $q=l$ in Lemma \ref{eng1app}, for some constant $C=C(N,p,s,\Lambda,C_1,C_2,C_3,C_4,l,\alpha,h)>0$, we have
\begin{equation}\label{IJest}
\begin{split}
I+\hat{I},J&\leq\frac{C}{r^p}\Bigg[\frac{1}{\theta^p(1-\theta)^{N+p}}+\frac{1}{(1-\theta)^p}\Bigg]\Bigg[\frac{2^{(p+l-2)j}}{\hat{k}^{l-2}}+\frac{2^{(N+p+l-1)j}}{\hat{k}^{l-p}}\Bigg]\int_{\Gamma_j}\int_{B_j}w_j^{l}\,dx\,dt\\
&\qquad+C{2^{lj}}\int_{\Gamma_j}\int_{B_j}w_j^{l}\,dx\,dt+C\frac{2^\frac{l(N+p\kappa_0)j}{N+p}}{\hat{k}^\frac{l(N+p\kappa_0)}{N+p}}\bigg(\int_{\Gamma_j}\int_{B_j}w_j^{l}\,dx\,dt\bigg)^\frac{N+p\kappa_0}{N+p}.
\end{split}
\end{equation}
By applying \eqref{IJest} in \eqref{Semb1app3}, we obtain \eqref{eng1app2eqn}. Hence the result follows.  
\end{proof}

The Lemma below helps us to conclude the local boundedness result in Theorem \ref{lbthm2}.

\begin{Lemma}\label{lbthm2lemma}
Let $1<p\leq\frac{2N}{N+2},\,0<s<1$ and $u\in L^\infty_{\mathrm{loc}}(\Om_T)\cap L^p_{\mathrm{loc}}\big(0,T;W^{1,p}_{\mathrm{loc}}(\Om)\big)\cap C_{\mathrm{loc}}\big(0,T;L^2_{\mathrm{loc}}(\Om)\big)\cap L^\infty_{\mathrm{loc}}\big(0,T;L^{p-1}_{ps}(\mathbb{R}^N)\big)$ be a weak subsolution of \eqref{maineqn} in $\Om_T$. Suppose $(x_0,t_0)\in\Om_T$ and $r\in(0,1)$ such that $\mathcal{Q}_r=\mathcal{Q}_r(x_0,t_0)=B_r(x_0)\times(t_0-r^p,t_0+r^p)\Subset\Om_T$. Assume that $m>\frac{N(2-p)}{p}$, $g$ satisfies \eqref{ghypo} for some $\alpha\geq 0$ with $1<l\leq 2$ and $h\in L^\infty_{\mathrm{loc}}(\Om_T)$. For $j\in\mathbb{N}\cup\{0\}$, let $B_j,\hat{B}_j,\Gamma_j,\hat{\Gamma}_j,\mathcal{Q}_j,\mathcal{\hat{Q}}_j$ be given by \eqref{bl}-\eqref{cyll} and $k_j,\hat{k}_j,\hat{k},w_j,\hat{w}_j$ are given by \eqref{k}-\eqref{ct}. Assume that $\frac{1}{2}\leq\theta<1$. Then for any $j\in\mathbb{N}\cup\{0\}$ there exists a positive constant $C=C(N,p,s,\Lambda,C_1,C_2,C_3,C_4,l,\alpha,m,h)$ such that
\begin{equation}\label{lbthm2lemmaest}
\begin{split}
\int_{B_{j+1}}\int_{\Gamma_{j+1}}w_{j+1}^m\,dx\,dt
&\leq\Bigg[\frac{C}{r^{p(1+\frac{p}{N})}}\frac{2^{aj}}{(1-\theta)^\frac{(N+p)^2}{N}}\Big(\frac{1}{\hat{k}^{m-2}}+\frac{1}{\hat{k}^{m-p}}\Big)^{1+\frac{p}{N}}+\frac{C}{r^p}\frac{2^{aj}}{\hat{k}^{m(1+\frac{p}{N})}}\Bigg]\\
&\qquad\cdot\|\hat{w}_j\|^{m-p\kappa}_{L^\infty(\mathcal{Q}_{j+1})}\bigg(\int_{B_j}\int_{\Gamma_j}w_j^{m}\,dx\,dt\bigg)^{1+\frac{p}{N}},
\end{split}
\end{equation}
where $a=(N+p+m)(1+\frac{p}{N})$ and $\kappa=1+\frac{2}{N}$.
\end{Lemma}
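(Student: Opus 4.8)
The plan is to interpolate $\int_{\Gamma_{j+1}}\int_{B_{j+1}}w_{j+1}^m$ between the (finite, by hypothesis) $L^\infty$-norm of $\hat{w}_j$ and the \emph{subcritical} parabolic Sobolev exponent $p\kappa=p(1+\tfrac2N)$, and then to bound the resulting gradient and $L^\infty L^2$ quantities by an energy estimate. The key preliminary observation is the algebraic fact that $\tfrac{N(2-p)}{p}>p\kappa$ whenever $1<p<\tfrac{2N}{N+2}$: the function $p\mapsto N^2(2-p)-p^2(N+2)$ is strictly decreasing and vanishes at $p=\tfrac{2N}{N+2}$, hence is positive on our range, which rearranges to $\tfrac{N(2-p)}{p}>p\kappa$; the same computation also gives $\tfrac{N(2-p)}{p}\ge2$, so the standing hypothesis $m>\tfrac{N(2-p)}{p}$ forces both $m-p\kappa>0$ and $m>\max\{p,2,l\}$. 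Since $k_{j+1}\ge\hat{k}_j$, so that $w_{j+1}\le\hat{w}_j$, and the cutoff $\Phi_j$ of \eqref{clbd} equals $1$ on $\mathcal{Q}_{j+1}\subset\hat{\mathcal{Q}}_j$, I would begin with the chain
\begin{equation*}
\int_{\Gamma_{j+1}}\int_{B_{j+1}}w_{j+1}^m\,dx\,dt\le\|\hat{w}_j\|_{L^\infty(\mathcal{Q}_{j+1})}^{m-p\kappa}\int_{\Gamma_{j+1}}\int_{B_{j+1}}w_{j+1}^{p\kappa}\,dx\,dt\le\|\hat{w}_j\|_{L^\infty(\mathcal{Q}_{j+1})}^{m-p\kappa}\int_{\hat{\Gamma}_{j}}\int_{\hat{B}_{j}}\big(\hat{w}_j\Phi_j\big)^{p\kappa}\,dx\,dt,
\end{equation*}
legitimate precisely because $m-p\kappa\ge0$ and $\hat{w}_j$ is locally bounded; it then remains to estimate the last integral.

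For this I would apply the parabolic Sobolev inequality Lemma~\ref{Sobo}(b) to $\hat{w}_j\Phi_j$, taking the parameter ``$m$'' there equal to $2$ so that the left exponent is exactly $p\kappa=p(1+\tfrac2N)$; this produces the bound \eqref{Semb1app1}, namely $\int_{\hat{\Gamma}_{j}}\int_{\hat{B}_{j}}(\hat{w}_j\Phi_j)^{p\kappa}\le C\big(\int_{\hat{\Gamma}_{j}}\int_{\hat{B}_{j}}|\nabla(\hat{w}_j\Phi_j)|^p\big)\big(\esssup_{\hat{\Gamma}_j}\int_{\hat{B}_{j}}|\hat{w}_j\Phi_j|^2\big)^{p/N}$. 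After expanding $|\nabla(\hat{w}_j\Phi_j)|^p\le C(|\nabla\hat{w}_j|^p\Phi_j^p+\hat{w}_j^p|\nabla\Phi_j|^p)$ and estimating $\int_{\hat{\Gamma}_j}\int_{\hat{B}_j}\hat{w}_j^p|\nabla\Phi_j|^p$ by \eqref{wp} with $\lambda=p$ (exactly as the term $\hat{I}$ in Lemma~\ref{eng1app2}), both factors on the right are controlled by the energy estimate of Lemma~\ref{eng1app} with $q=m$ (admissible since $m>\max\{p,2,l\}$); the one modification I would make is in the source term $I_5$, which — because here $l\le2$ and $h\in L^\infty_{\mathrm{loc}}(\Om_T)$ — I would bound directly by $\|h\|_{L^\infty(\mathcal{Q}_j)}^{l'}\int_{\Gamma_j}\int_{B_j}\chi_{\{u\ge\hat{k}_j\}}\le C\,2^{mj}\hat{k}^{-m}\int_{\Gamma_j}\int_{B_j}w_j^m$, using the case $\lambda=0$, $q=m$ of \eqref{wp}, thereby replacing the sublinear tail term of Lemma~\ref{eng1app} by a genuinely linear one, with a constant depending on $h$ but not on $\|u\|_{L^\infty}$. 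The upshot is that both $\int_{\hat{\Gamma}_j}\int_{\hat{B}_j}|\nabla(\hat{w}_j\Phi_j)|^p$ and $\esssup_{\hat{\Gamma}_j}\int_{\hat{B}_j}|\hat{w}_j\Phi_j|^2$ are dominated by a single quantity
\begin{equation*}
E_j:=\frac{C\,2^{(N+p+m)j}}{r^p}\bigg[\frac{1}{\theta^{p}(1-\theta)^{N+p}}+\frac{1}{(1-\theta)^p}\bigg]\bigg(\frac{1}{\hat{k}^{m-2}}+\frac{1}{\hat{k}^{m-p}}\bigg)\int_{\Gamma_j}\int_{B_j}w_j^m\,dx\,dt+C\,2^{mj}\bigg(\frac{1}{\hat{k}^{m-l}}+\frac{1}{\hat{k}^m}\bigg)\int_{\Gamma_j}\int_{B_j}w_j^m\,dx\,dt,
\end{equation*}
so that the Sobolev step reads $\int_{\hat{\Gamma}_j}\int_{\hat{B}_j}(\hat{w}_j\Phi_j)^{p\kappa}\le CE_j^{1+p/N}$.

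Multiplying by $\|\hat{w}_j\|_{L^\infty(\mathcal{Q}_{j+1})}^{m-p\kappa}$ and expanding $E_j^{1+p/N}\le C\big((E_j^{(1)})^{1+p/N}+(E_j^{(2)})^{1+p/N}\big)$ (where $E_j^{(1)}$, $E_j^{(2)}$ denote the two groups above) then yields \eqref{lbthm2lemmaest}, on using $(N+p+m)(1+\tfrac pN)=a$, $(N+p)(1+\tfrac pN)=\tfrac{(N+p)^2}{N}$, $\theta\ge\tfrac12$ (so $\theta^{-p}\le2^p$ and $(1-\theta)^{-p}\le(1-\theta)^{-(N+p)}$), and $r\in(0,1)$. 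The step I expect to demand the most care is this last bookkeeping of the powers of $\hat{k}$ and of $1-\theta$: one must check, distinguishing $\hat{k}\ge1$ from $\hat{k}<1$, that after raising to the power $1+\tfrac pN$ the source group $2^{mj}(\hat{k}^{-(m-l)}+\hat{k}^{-m})$ is absorbed into the two terms displayed in \eqref{lbthm2lemmaest}, the elementary inequality $\hat{k}^{-(m-l)}\le\hat{k}^{-(m-2)}+\hat{k}^{-m}$ — valid for every $\hat{k}>0$ since $l\le2\le m$ — being the device that makes this go through. The twin point, already flagged above, is that every constant must remain independent of $\|u\|_{L^\infty}$, which is precisely what later permits the estimate to pass to the limit along the approximating sequence in \eqref{lbc1}--\eqref{lbc2} behind Theorem~\ref{lbthm2}.
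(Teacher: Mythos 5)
Your proposal is correct and follows essentially the same route as the paper's proof: the interpolation $w_{j+1}^m \le \|\hat{w}_j\|_{L^\infty(\mathcal{Q}_{j+1})}^{m-p\kappa}\hat{w}_j^{p\kappa}$, the parabolic Sobolev step on $\hat{w}_j\Phi_j$, and the estimate of the resulting gradient and sup terms via Lemma \ref{eng1app} with $q=m$ and $\kappa_0=1$ (your ``direct'' bound on $I_5$ is exactly the $\kappa_0=1$ case). You are in fact slightly more careful than the paper in the final bookkeeping: the explicit device $\hat{k}^{-(m-l)}\le\hat{k}^{-(m-2)}+\hat{k}^{-m}$, valid for all $\hat{k}>0$, is needed to absorb the $I_4$ source term into the two displayed groups, a step that \eqref{OI13} passes over silently.
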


\begin{proof}
Recall that $\mathcal{Q}_j=B_j\times\Gamma_j$ and $\hat{\mathcal{Q}}_j=\hat{B}_j\times\hat{\Gamma}_j$ for $j\in\mathbb{N}\cup\{0\}$. Using $m>p\kappa$, we observe that
\begin{equation}\label{O1}
\int_{B_{j+1}}\int_{\Gamma_{j+1}}w_{j+1}^m\,dx\,dt
\leq\int_{B_{j+1}}\int_{\Gamma_{j+1}}\hat{w}_j^{m}\,dx\,dt
\leq\|\hat{w}_j\|^{m-p\kappa}_{L^\infty(\mathcal{Q}_{j+1})}\int_{B_{j+1}}\int_{\Gamma_{j+1}}\hat{w}_j^{p\kappa}\,dx\,dt.
\end{equation}
Below, we estimate the integral
$$
I=\int_{B_{j+1}}\int_{\Gamma_{j+1}}\hat{w}_j^{p\kappa}\,dx\,dt.
$$
By Lemma \ref{Sobo} (b), for some constant $C=C(p,N)>0$, we have
\begin{equation}\label{O2}
\int_{\hat{B}_j}\int_{\hat{\Gamma}_j}\big(\hat{w}_j\Phi_j\big)^{p\kappa}\,dx\,dt
\leq C\int_{\hat{B}_j}\int_{\hat{\Gamma}_j}|\nabla(\hat{w}_j\Phi_j)|^p\,dx\,dt\bigg(\esssup_{\hat{\Gamma}_j}\int_{\hat{B}_j}|\hat{w}_j\Phi_j|^2\,dx\bigg)^\frac{p}{N},
\end{equation}
where $\Phi_j\in C_c^{\infty}(\hat{\mathcal{Q}}_j)$ is such that 
\begin{equation}\label{O3}
0\leq\Phi_j\leq 1\text{ in }\hat{\mathcal{Q}}_j,\quad|\nabla\Phi_j|\leq C\frac{2^j}{(1-\theta)r},\quad\Phi_j\equiv 1\text{ in }\mathcal{Q}_{j+1},
\end{equation}
for some constant $C=C(N,p)>0$. Since $\mathcal{Q}_{j+1}\subset\hat{\mathcal{Q}}_j$, from \eqref{O2} we deduce that
\begin{equation}\label{O4}
\int_{B_{j+1}}\int_{\Gamma_{j+1}}\hat{w}_j^{p\kappa}\,dx dt\leq C(I_1+I_2)I_3^\frac{p}{N},
\end{equation}
for some constant $C=C(N,p)>0$, where
$$
I_1=\int_{\hat{B}_j}\int_{\hat{\Gamma}_j}|\nabla\hat{w}_j|^p\,dx\,dt,\quad I_2=\int_{\hat{B}_j}\int_{\hat{\Gamma}_j}\hat{w}_j^{p}|\nabla\Phi_j|^p\,dx\,dt
\quad\text{and}\quad
I_3=\esssup_{\hat{\Gamma}_j}\int_{\hat{B}_j}|\hat{w}_j|^2\,dx.
$$
\textbf{Estimate of $I_1$ and $I_3$:} Since $1<l\leq 2$, $m>\frac{N(2-p)}{p}$ and $h\in L^\infty_{\mathrm{loc}}(\Om_T)$, choosing $q=m$ and $\kappa_0=1$ in Lemma \ref{eng1app}, for a positive constant $C=C(N,p,s,\Lambda,C_1,C_2,C_3,C_4,l,\alpha,m,h)$, we have
\begin{equation}\label{OI13}
\begin{split}
I_i&\leq\frac{C}{r^p}\frac{2^{bj}}{(1-\theta)^{N+p}}\bigg(\frac{1}{\hat{k}^{m-2}}+\frac{1}{\hat{k}^{m-p}}\bigg)\int_{B_j}\int_{\Gamma_j}w_j^{m}\,dx dt+C\frac{2^{bj}}{\hat{k}^m}\int_{B_j}\int_{\Gamma_j}w_j^{m}\,dx dt,
\end{split}
\end{equation}
for $i=1,3$ where we have also used the fact that $\frac{1}{2}\leq\theta<1$ and $r\in(0,1)$.
\\
\textbf{Estimate of $I_2$:} Using the properties of $\Phi_j$ from \eqref{O3}, we have
\begin{equation}\label{OI2}
I_2\leq C\frac{2^{pj}}{(1-\theta)^p r^p}\int_{B_j}\int_{\Gamma_j}\hat{w}_j^p\,dx\,dt
\leq\frac{C}{r^p}\frac{2^{mj}}{(1-\theta)^{p}}\frac{1}{\hat{k}^{m-p}}\int_{B_j}\int_{\Gamma_j}w_j^{m}\,dx\,dt,
\end{equation}
for some positive constant $C=C(N,p,m)$, where we have also applied
$$
\hat{w}_j^{p}\leq C\frac{2^{(m-p)j}}{\hat{k}^{m-p}}w_j^{m}\text{ in }\mathcal{Q}_j,
$$
for some $C=C(m,p)>0$, which follows by choosing $\lambda=p$ and $q=m$ in \eqref{wp}. Combining \eqref{OI13} and \eqref{OI2} in \eqref{O4}, for some positive constant $C=C(N,p,s,\Lambda,C_1,C_2,C_3,C_4,l,\alpha,m,h)$, we obtain
\begin{equation}\label{Ofinal}
\begin{split}
&\int_{B_{j+1}}\int_{\Gamma_{j+1}}\hat{w}_j^{p\kappa}\,dx\,dt\\
&\leq\Bigg[\frac{C}{r^{p(1+\frac{p}{N})}}\frac{2^{aj}}{(1-\theta)^\frac{(N+p)^2}{N}}\Big(\frac{1}{\hat{k}^{m-2}}+\frac{1}{\hat{k}^{m-p}}\Big)^{1+\frac{p}{N}}+\frac{C}{r^p}\frac{2^{aj}}{\hat{k}^{m(1+\frac{p}{N})}}\Bigg]\bigg(\int_{B_j}\int_{\Gamma_j}w_j^{m}\,dx\,dt\bigg)^{1+\frac{p}{N}},
\end{split}
\end{equation}
where $a=(N+p+m)(1+\frac{p}{N})$. Therefore, using \eqref{Ofinal} in \eqref{O1}, the estimate \eqref{lbthm2lemmaest} follows.
\end{proof}

\section{Semicontinuity and pointwise behavior of subsolutions and supersolutions}
In this section, we obtain lower semicontinuity as well as upper semicontinuity results for weak supersolution and subsolution of \eqref{maineqn} respectively. We also discuss the pointwise behavior of such semicontinuous functions. First, we define the lower and upper semicontinuous representatives of a measurable function. 

Let $u$ be a measurable function which is locally essentially bounded below in $\Om_T$. Suppose that $(x,t)\in\Om_T$ and $r\in(0,1)$, $\theta>0$ such that $\mathcal{Q}_{r,\theta}(x,t)=B_r(x)\times(t-\theta r^p,t+\theta r^p)\Subset\Om_T$. The  lower semicontinuous regularization $u_*$ of $u$ is defined as 
\begin{equation}\label{lscrp}
u_*(x,t)=\essliminf_{(y,\hat{t})\to (x,t)}\,u(y,\hat{t})=\lim_{r\to 0}\essinf_{\mathcal{Q}_{r,\theta}(x,t)}\,u\text{ for } (x,t)\in \Om_T.
\end{equation}

Analogously, for a locally essentially bounded above measurable function $u$ in $\Om_T$, we define an upper semicontinuous regularization $u^*$  of $u$ by
\begin{equation}\label{uscrp}
u^*(x,t)=\esslimsup_{(y,\hat{t})\to (x,t)}\,u(y,\hat{t})=\lim_{r\to 0}\esssup_{\mathcal{Q}_{r,\theta}(x,t)}\,u\text{ for } (x,t)\in \Om_T.
\end{equation}
It is easy to see that $u_*$ is lower semicontinuous and  $u^*$ is upper semicontinuous in $\Om_T$.

Let $u\in L^1_{\mathrm{loc}}(\Om_T)$ and define the set of Lebesgue points of $u$ by
$$
\mathcal{F}=\bigg\{(x,t)\in\Om_T:|u(x,t)|<\infty,\,\lim_{r\to 0}\fint_{\mathcal{Q}_{r,\theta}(x,t)}|u(x,t)-u(y,\hat{t})|\,dy\,d\hat{t}=0\bigg\}.
$$
From the Lebesgue differentiation theorem we have $|\mathcal{F}|=|\Om_T|$.

The following lower semicontinuity result for weak supersolutions of \eqref{maineqn} follows by combining Lemma \ref{DGL1} and Theorem \ref{lscthm}.

\begin{Theorem}\label{lscthm1}(\textbf{Lower semicontinuity})
Let $1<p<\infty,\,0<s<1$ and $g\equiv 0$ in $\Om_T\times\R$. Suppose that $u\in L^p_{\mathrm{loc}}\big(0,T;W^{1,p}_{\mathrm{loc}}(\Om)\big)\cap C_{\mathrm{loc}}\big(0,T;L^2_{\mathrm{loc}}(\Om)\big)\cap L^\infty_{\mathrm{loc}}\big(0,T;L^{p-1}_{ps}(\mathbb{R}^N)\big)$ is a weak supersolution of \eqref{maineqn} in $\Om_T$ such that $u$ is essentially bounded below in $\R^N\times(0,T)$. Let $u_*$ be defined by \eqref{lscrp}. Then $u(x,t)=u_*(x,t)$ at every Lebesgue point $(x,t)\in\Om_T$. 
In particular, $u_*$ is a lower semicontinuous representative of $u$ in $\Om_T$.
\end{Theorem}

We have the following upper semicontinuity result for weak subsolutions of \eqref{maineqn}, which follows by combining Lemma \ref{DGLi} and Theorem \ref{uscthmi}.


\begin{Theorem}\label{uscthm1}(\textbf{Upper semicontinuity})
Let $1<p<\infty,\,0<s<1$ and $g\equiv 0$ in $\Om_T\times\R$. Suppose that $u\in L^p_{\mathrm{loc}}\big(0,T;W^{1,p}_{\mathrm{loc}}(\Om)\big)\cap C_{\mathrm{loc}}\big(0,T;L^2_{\mathrm{loc}}(\Om)\big)\cap L^\infty_{\mathrm{loc}}\big(0,T;L^{p-1}_{ps}(\mathbb{R}^N)\big)$ is a weak subsolution of \eqref{maineqn} in $\Om_T$ such that $u$ is essentially bounded above in $\R^N\times(0,T)$. Let $u^*$ be defined by \eqref{uscrp}. Then $u(x,t)=u^*(x,t)$ at every Lebesgue point $(x,t)\in\Om_T$. In particular, $u^*$ is an upper semicontinuous representative of $u$ in $\Om_T$.
\end{Theorem}
Our next result asserts that the lower semicontinuous representative $u_*$, given by Theorem \ref{lscthm1}, is determined by previous times. The proof follows by a combination of Lemma \ref{DGL2} below and the proof of \cite[Theorem 3.1]{Liao}.

\begin{Theorem}\label{lscpt}(\textbf{Pointwise behavior})
Let $1<p<\infty,\,0<s<1$ and $g\equiv 0$ in $\Om_T\times\R$. Suppose that $u\in L^p_{\mathrm{loc}}\big(0,T;W^{1,p}_{\mathrm{loc}}(\Om)\big)\cap C_{\mathrm{loc}}\big(0,T;L^2_{\mathrm{loc}}(\Om)\big)\cap L^\infty_{\mathrm{loc}}\big(0,T;L^{p-1}_{ps}(\mathbb{R}^N)\big)$ is a weak supersolution of \eqref{maineqn} in $\Om_T$ such that $u$ is essentially bounded below in $\R^N\times(0,T)$.
Assume that $u_*$ is the lower semicontinuous representative of $u$ given by Theorem \ref{lscthm1}. 
Then for every $(x,t)\in\Om_T$, we have
$$
u_*(x,t)=\inf_{\theta>0}\lim_{r\to 0}\essinf_{\mathcal{Q}'_{r,\theta}(x,t)}\,u,
$$
where $\mathcal{Q}'_{r,\theta}(x,t)=B_r(x)\times(t-2\theta r^{p},t-\theta r^{p}),\,r\in(0,1)$.
In particular, we have
$$
u_*(x,t)={\essliminf_{(y,\hat{t})\to(x,t),\,\hat{t}<t}}\,u(y,\hat{t})
$$
at every point $(x,t)\in\Om_T$.
\end{Theorem}

Our final result concerns the pointwise behavior of the upper semicontinuous representative given by Theorem \ref{uscthm1}. The proof follows by a combination of Lemma \ref{DGLii} below and proceeding similarly as in the proof of \cite[Theorem 3.1]{Liao}.

\begin{Theorem}\label{uscpt}(\textbf{Pointwise behavior})
Let $1<p<\infty,\,0<s<1$ and $g\equiv 0$ in $\Om_T\times\R$. Suppose that $u\in L^p_{\mathrm{loc}}\big(0,T;W^{1,p}_{\mathrm{loc}}(\Om)\big)\cap C_{\mathrm{loc}}\big(0,T;L^2_{\mathrm{loc}}(\Om)\big)\cap L^\infty_{\mathrm{loc}}\big(0,T;L^{p-1}_{ps}(\mathbb{R}^N)\big)$ is a weak subsolution of \eqref{maineqn} in $\Om_T$ such that $u$ is essentially bounded above in $\R^N\times(0,T)$.
Assume that $u^*$ is the upper semicontinuous representative of $u$ given by Theorem \ref{uscthm1}. 
Then for every $(x,t)\in\Om_T$, we have
$$
u^*(x,t)=\sup_{\theta>0}\lim_{r\to 0}\esssup_{\mathcal{Q}'_{r,\theta}(x,t)}\,u,
$$
where $\mathcal{Q}'_{r,\theta}(x,t)=B_r(x)\times(t-2\theta r^{p},t-\theta r^{p}),\,r\in(0,1)$.
In particular, we have
$$
u^*(x,t)={\esslimsup_{(y,\hat{t})\to(x,t),\,\hat{t}<t}}\,u(y,\hat{t})
$$
at every point $(x,t)\in\Om_T$.
\end{Theorem}

\subsection{Preliminaries}

The following measure theoretic property from \cite{Liao} will be useful for us.

\begin{Definition}\label{propertyP} Let $u$ be a measurable function which is locally essentially bounded below in $\Om_T$. Assume that $(x_0,t_0)\in\Om_T$ and $r\in(0,1)$, $\theta>0$ such that $\mathcal{Q}_{r,\theta}(x_0,t_0)=B_r(x_0)\times(t_0-\theta r^p,t_0+\theta r^p)\Subset\Om_T$. Suppose 
\begin{equation}\label{lmu}
a,c\in(0,1),\quad M>0,\quad \mu^-\leq\essinf_{\mathcal{Q}_{r,\theta}(x_0,t_0)}\,u.
\end{equation}
We say that $u$ satisfies the property $(\mathcal D)$ if there exists a constant $\nu\in(0,1)$, 
which depends only on $a,M,\theta,\mu^-$ and other data, but independent of $r$, such that if
\begin{equation}\label{lgc}
|\{u\leq\mu^-+M\}\cap\mathcal{Q}_{r,\theta}(x_0,t_0)|\leq\nu|\mathcal{Q}_{r,\theta}(x_0,t_0)|,
\end{equation}
then 
\begin{equation}\label{lr}
u\geq\mu^-+aM\text{ a.e. in }\mathcal{Q}_{cr,\theta}(x_0,t_0).
\end{equation}
\end{Definition}

Next, we state a result from Liao in \cite[Theorem 2.1]{Liao} that shows that any such function with the property $(\mathcal D)$ has a lower semicontinuous representative. 

\begin{Theorem}\label{lscthm}
Let $u$ be a measurable function in $\Om_T$ which is locally integrable and locally essentially bounded below in $\Om_T$. Assume that $u$ satisfies the property $(\mathcal D)$. Let $u_*$ be defined by \eqref{lscrp}. Then $u(x,t)=u_*(x,t)$ for every $x\in \mathcal{F}$. 
In particular, $u_*$ is a lower semicontinuous representative of $u$ in $\Om_T$.
\end{Theorem}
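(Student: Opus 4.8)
The plan is to establish the pointwise identity $u(x_0,t_0)=u_*(x_0,t_0)$ at an arbitrary Lebesgue point $(x_0,t_0)\in\mathcal F$, and then to deduce the final assertion from $|\mathcal F|=|\Om_T|$ together with the (routine) lower semicontinuity of the regularization $u_*=\essliminf u$. First I would fix $(x_0,t_0)\in\mathcal F$ and a $\theta>0$ with $\mathcal Q_{r,\theta}(x_0,t_0)\Subset\Om_T$ for all small $r$. Since $r\mapsto\essinf_{\mathcal Q_{r,\theta}(x_0,t_0)}u$ is non-decreasing as $r\downarrow 0$ and bounded above by $\fint_{\mathcal Q_{r,\theta}(x_0,t_0)}u$, which converges to $u(x_0,t_0)$ by the Lebesgue point property, the defining limit for $u_*(x_0,t_0)$ exists, and the same comparison $\essinf_{\mathcal Q_{r,\theta}(x_0,t_0)}u\le\fint_{\mathcal Q_{r,\theta}(x_0,t_0)}u$ immediately yields $u_*(x_0,t_0)\le u(x_0,t_0)$. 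Everything therefore reduces to the reverse inequality.

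For the reverse inequality I would argue by contradiction, assuming $\delta:=u(x_0,t_0)-u_*(x_0,t_0)>0$. Put $M=\delta/2$ and choose any $a\in(0,1)$. For a small $r_0$ set $\mu^-:=\essinf_{\mathcal Q_{r_0,\theta}(x_0,t_0)}u$; this is a valid lower bound for $u$ on every $\mathcal Q_{r,\theta}(x_0,t_0)$ with $r\le r_0$, so property $(\mathcal D)$ furnishes a constant $\nu\in(0,1)$ depending on $a,M,\mu^-$ and the data, but \emph{not} on $r$. Since $\mu^-\le u_*(x_0,t_0)=u(x_0,t_0)-\delta$, the level set obeys $\{u\le\mu^-+M\}\subseteq\{u\le u(x_0,t_0)-\tfrac{\delta}{2}\}\subseteq\{|u-u(x_0,t_0)|\ge\tfrac{\delta}{2}\}$, and Chebyshev's inequality applied to $\fint_{\mathcal Q_{r,\theta}(x_0,t_0)}|u-u(x_0,t_0)|\to 0$ produces a radius $r_1\in(0,r_0]$ with $|\{u\le\mu^-+M\}\cap\mathcal Q_{r,\theta}(x_0,t_0)|\le\nu|\mathcal Q_{r,\theta}(x_0,t_0)|$ for every $r\le r_1$. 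For each such $r$, property $(\mathcal D)$ gives $u\ge\mu^-+aM$ a.e. in $\mathcal Q_{cr,\theta}(x_0,t_0)$, hence $\essinf_{\mathcal Q_{cr,\theta}(x_0,t_0)}u\ge\mu^-+aM$, and letting $r\to 0$ yields $u_*(x_0,t_0)\ge\mu^-+aM$. Finally letting $r_0\to 0$ forces $\mu^-\to u_*(x_0,t_0)$, so $u_*(x_0,t_0)\ge u_*(x_0,t_0)+aM$, i.e. $M\le 0$, contradicting $M=\delta/2>0$. Thus $u(x_0,t_0)=u_*(x_0,t_0)$ on $\mathcal F$, so $u=u_*$ a.e. in $\Om_T$; combined with the lower semicontinuity of $u_*$ (obtained by a standard neighbourhood-exhaustion argument: if $(x_n,t_n)\to(x,t)$ then $\mathcal Q_{\rho/2,\theta}(x_n,t_n)\subseteq\mathcal Q_{\rho,\theta}(x,t)$ for $n$ large, so $u_*(x_n,t_n)\ge\essinf_{\mathcal Q_{\rho,\theta}(x,t)}u$), this shows $u_*$ is a lower semicontinuous representative of $u$.

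I expect the only real difficulty to be organizational rather than analytic: one must freeze $\mu^-$, and therefore $\nu$, independently of $r$ before running the shrinking-cylinder argument, and only afterwards perform the outer limit $r_0\to 0$; inverting the order of the two limits would either destroy the $r$-uniformity needed to invoke $(\mathcal D)$ or fail to close the contradiction. No genuinely hard estimate appears at this level, since all the quantitative work has been placed into the hypothesis that $u$ satisfies property $(\mathcal D)$.
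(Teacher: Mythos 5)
Your proof is correct. The paper does not reprove this statement; it simply cites \cite[Theorem 2.1]{Liao}, and your argument is the natural one underlying that reference: the inequality $u_*\le u$ on $\mathcal F$ is immediate from monotonicity of $r\mapsto\essinf_{\mathcal Q_{r,\theta}}u$ and the Lebesgue point property, while the reverse inequality is obtained by feeding the Lebesgue point information (via Chebyshev) into property $(\mathcal D)$ after first freezing $\mu^-$ (hence $\nu$) at a fixed scale $r_0$, and only then letting $r_0\downarrow 0$. You have correctly identified the one genuinely delicate point — the order of limits, which is forced by the $\mu^-$-dependence of $\nu$ — and handled it properly; the concluding step showing $u_*$ is lower semicontinuous by shrinking cylinders is also standard and correct.
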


Next we state another useful measure theoretic property, which will help us to study weak subsolutions of \eqref{maineqn}.

\begin{Definition}\label{propertyE} Let $u$ be a measurable function which is locally essentially bounded above in $\Om_T$. Assume that $(x_0,t_0)\in\Om_T$ and $r\in(0,1)$, $\theta>0$ such that $\mathcal{Q}_{r,\theta}(x_0,t_0)=B_r(x_0)\times(t_0-\theta r^p,t_0+\theta r^p)\Subset\Om_T$. Suppose 
\begin{equation}\label{umu}
a,c\in(0,1),\quad M>0,\quad \esssup_{\mathcal{Q}_{r,\theta}(x_0,t_0)}\,u\leq\mu^+.
\end{equation}
We say that $u$ satisfies the property $(\mathcal E)$ if there exists a constant $\nu\in(0,1)$, 
which depends only on $a,M,\theta,\mu^+$ and other data, but independent of $r$, such that if
\begin{equation}\label{ugc}
|\{u\geq\mu^+-M\}\cap\mathcal{Q}_{r,\theta}(x_0,t_0)|\leq\nu|\mathcal{Q}_{r,\theta}(x_0,t_0)|,
\end{equation}
then 
\begin{equation}\label{ur}
u\leq\mu^+-aM\text{ a.e. in }\mathcal{Q}_{cr,\theta}(x_0,t_0).
\end{equation}
\end{Definition}
Our next result shows that any such function with the property $(\mathcal E)$ has an upper semicontinuous representative. The proof of Theorem \ref{uscthmi} stated below is analogous to the proof of \cite[Theorem 2.1]{Liao}.

\begin{Theorem}\label{uscthmi}
Let $u$ be a measurable function in $\Om_T$ which is locally integrable and locally essentially bounded above in $\Om_T$. Assume that $u$ satisfies the property $(\mathcal E)$. Let $u^*$ be defined by \eqref{uscrp}.
Then $u(x,t)=u^*(x,t)$ for every $x\in \mathcal{F}$. 
In particular, $u^*$ is an upper semicontinuous representative of $u$ in $\Om_T$.
\end{Theorem}

\subsection{De Giorgi Lemmas for weak supersolutions}
Assume that $(x_0,t_0)\in\Om_T$ and $r\in(0,1)$, $\theta>0$ such that $\mathcal{Q}_{r,\theta}(x_0,t_0)=B_r(x_0)\times(t_0-\theta r^p,t_0+\theta r^p)\Subset\Om_T$. Let $a,\mu^-$ and $M$ be defined as in \eqref{lmu}. Then the following De Giorgi type lemma shows that a weak supersolution satisfies the property $(\mathcal D)$ in Definition \ref{propertyP}.
\begin{Lemma}\label{DGL1}
Let $1<p<\infty,\,0<s<1$ and $g\equiv 0$ in $\Om_T\times\R$. Suppose that $u\in L^p_{\mathrm{loc}}\big(0,T;W^{1,p}_{\mathrm{loc}}(\Om)\big)\cap C_{\mathrm{loc}}\big(0,T;L^2_{\mathrm{loc}}(\Om)\big)\cap L^\infty_{\mathrm{loc}}\big(0,T;L^{p-1}_{ps}(\mathbb{R}^N)\big)$ is a weak supersolution of \eqref{maineqn} in $\Om_T$ such that $u$ is essentially bounded below in $\R^N\times(0,T)$ and let
\begin{equation}\label{lmbc}
\lambda^-\leq\essinf_{\mathbb{R}^N\times(0,T)}\,u.
\end{equation} 
Then there exists a constant $\nu=\nu(a,M,\mu^-,\lambda^-,\theta,N,p,s,\Lambda,C_1,C_2,C_3,C_4)\in(0,1)$, such that if 
$$
|\{u\leq\mu^-+M\}\cap \mathcal{Q}_{r,\theta}(x_0,t_0)|\leq\nu|\mathcal{Q}_{r,\theta}(x_0,t_0)|,
$$
then
$$
u\geq\mu^-+aM\text{ a.e. in } \mathcal{Q}_{\frac{3r}{4},\theta}(x_0,t_0).
$$
\end{Lemma}
\begin{proof}
For $j\in\mathbb N\cup\{0\}$, let
\begin{equation}\label{itelsc}
\begin{split}
&k_j=\mu^-+aM+\frac{(1-a)M}{2^j},\quad\hat{k}_j=\frac{k_j+k_{j+1}}{2},\\
&r_j=\frac{3r}{4}+\frac{r}{2^{j+2}},\quad\hat{r}_j=\frac{r_j+r_{j+1}}{2},\\
&B_j=B_{r_j}(x_0),\quad\hat{B}_{j}=B_{\hat{r}_j}(x_0),\\
&\Gamma_j=(t_0-\theta r_j^{p},t_0+\theta r_j^{p}),\quad\hat{\Gamma}_j=(t_0-\theta\hat{r}_j^p,t_0+\theta\hat{r}_{j}^p)\\
&\mathcal{Q}_j=B_j\times\Gamma_j,\quad\mathcal{\hat{Q}}_j=\hat{B}_j\times\hat{\Gamma}_j,\quad A_j=\mathcal{Q}_j\cap\{u\leq k_j\}.
\end{split}
\end{equation}
Notice that $r_{j+1}<\hat{r}_j<r_j$, $k_{j+1}<\hat{k}_j<k_j$ for all $j\in\mathbb N\cup\{0\}$ and therefore, we have
$B_{j+1}\subset\hat{B}_j\subset B_j$ and  $\Gamma_{j+1}\subset\hat{\Gamma}_j\subset\Gamma_j$.
Let $\{\Phi_j\}_{j=0}^{\infty}\subset C_c^{\infty}(\hat{\mathcal{Q}}_j)$ be such that
\begin{equation}\label{Philsc}
0\leq\Phi_j\leq 1,\quad |\nabla\Phi_j|\leq C\frac{2^j}{r}\text{ in }\hat{\mathcal{Q}}_j
\quad\text{and}\quad \Phi_j\equiv 1\text{ in }\mathcal{Q}_{j+1},
\end{equation}
for some constant $C=C(N,p)>0$. Notice that, over the set $A_{j+1}=\mathcal{Q}_{j+1}\cap\{u\leq k_{j+1}\}$, we have $\hat{k}_j-k_{j+1}\leq \hat{k}_j-u$.  
By integrating over the set $A_{j+1}$ and using Lemma \ref{Sobo} (b), we obtain
\begin{equation}\label{Sobolsc}
\begin{split}
(1-a)\frac{M}{2^{j+3}}|A_{j+1}|
&\leq\int_{A_{j+1}}(\hat{k}_j-k_{j+1})\,dx\,dt
\leq\int_{\mathcal{Q}_{j+1}}(\hat{k}_j-u)\,dx\,dt
\leq\int_{\mathcal{\hat{Q}}_j}(u-\hat{k}_j)_-\Phi_j\,dx\,dt\\
&\leq\bigg(\int_{\hat{\mathcal{Q}}_j}\big((u-\hat{k}_j)_-\Phi_j\big)^{p(1+\frac{2}{N})}\,dx\,dt\bigg)^\frac{N}{p(N+2)}|A_j|^{1-\frac{N}{p(N+2)}}\\
&\leq C(I+J)^\frac{N}{p(N+2)}\hat{K}^\frac{1}{N+2}|A_j|^{1-\frac{N}{p(N+2)}},
\end{split}
\end{equation}
for some constant $C=C(N,p)>0$, where 
$$
I=\int_{\hat{\mathcal{Q}}_j}|\nabla (u-\hat{k}_j)_-|^p\,dx\,dt,
\quad 
J=\int_{\hat{\mathcal{Q}}_j}(u-\hat{k}_j)_{-}^{p}|\nabla\Phi_j|^p\,dx\,dt 
\quad\text{and}\quad
\hat{K}=\esssup_{t\in\hat{\Gamma}_j}\int_{\hat{B}_j}(u-\hat{k}_{j})_-^{2}\,dx.
$$
Note that, due to the assumption \eqref{lmbc}, we know $\lambda^-\leq\essinf_{\mathbb{R}^N\times(0,T)}\,u$, which gives
\begin{equation}\label{eqn}
(u-\hat{k}_j)_-\leq(\mu^-+M-\lambda^-)_+:=L \quad\text{in}\quad\mathbb{R}^N\times(0,T).
\end{equation}
\textbf{Estimate of $J$:} Using the properties of $\Phi_j$ and \eqref{eqn}, we get
\begin{equation}\label{Jlsc1}
\begin{split}
J&=\int_{\hat{\mathcal{Q}}_j}(u-\hat{k}_j)_{-}^{p}|\nabla\Phi_j|^p\,dx\,dt\leq C\frac{2^{jp}}{r^p}L^p|A_j|,
\end{split}
\end{equation}
for some constant $C=C(N,p)>0$.\\
\textbf{Estimate of $I$ and $\hat{K}$:} Let $\xi_j=\psi_j\eta_j$, where $\{\psi_j\}_{j=0}^{\infty}\subset C_{c}^\infty(B_j)$ and $\{\eta_j\}_{j=0}^{\infty}\subset C_{c}^\infty(\Gamma_j)$ be such that 
\begin{equation}\label{lscctof}
\begin{split}
&0\leq\psi_j\leq 1,\quad|\nabla\psi_j|\leq C\frac{2^j}{r}\text{ in }B_j,\quad\psi_j\equiv 1\text{ in }\hat{B}_j,\quad\mathrm{dist}(\mathrm{supp}\,\psi_j,\mathbb{R}^N\setminus B_j)\geq 2^{-j-1}r,\\
&0\leq\eta_j\leq 1,\quad|\partial_t\eta_j|\leq C\frac{2^{pj}}{\theta r^p}\text{ in }\Gamma_j,\quad\eta_j\equiv 1\text{ in }\hat{\Gamma}_j,
\end{split}
\end{equation}
for some constant $C=C(N,p)>0$. Note that $g\equiv 0$ in $\Om_T\times\R$ and for $\hat{k}_j<k_j$, we have $(u-k_j)_-\geq (u-\hat{k}_j)_-$. Therefore, noting Lemma \ref{Auxfnlemma} and Remark \ref{eng1rmk2}, we set $r=r_j$, $\tau_1=t_0-\theta \hat{r}_j^{p}$, $\tau_2=t_0+\theta r_j^{p}$, $\tau=\theta r_j^{p}-\theta \hat{r}_j^{p}$ in Lemma \ref{eng1} to obtain
\begin{equation}\label{lscengeqn1}
\begin{split}
&I+\hat{K}=\int_{\hat{\Gamma}_j}\int_{\hat{B}_j}|\nabla (u-\hat{k}_j)_-|^p\,dx dt+\esssup_{\hat{\Gamma}_j}\int_{\hat{B}_j}(u-\hat{k}_j)_-^{2}\,dx\leq I_1+I_2+I_3+I_4,
\end{split}
\end{equation}
where
\begin{equation*}
\begin{split}
I_1&=C\int_{\Gamma_j}\int_{B_j}\int_{B_j}{\max\{(u-k_j)_{-}(x,t),(u-k_j)_{-}(y,t)\}^p|\xi_j(x,t)-\xi_j(y,t)|^p}\,d\mu\,dt,\\
I_2&=C\int_{\Gamma_j}\int_{B_j}(u-k_j)_-^p|\nabla\xi_j|^p\,dx\,dt,\\
I_3&=C\esssup_{x\in\mathrm{supp}\,\psi_j,\,t\in\Gamma_j}\int_{{\mathbb{R}^N\setminus B_j}}{\frac{(u-k_j)_{-}(y,t)^{p-1}}{|x-y|^{N+ps}}}\,dy
\int_{\Gamma_j}\int_{B_j}(u-k_j)_{-}\xi_{j}^p\,dx\,dt
\quad\text{and}\\
I_4&=C\int_{\Gamma_j}\int_{B_j}(u-k_j)_-^{2}|\partial_t\xi_j^{p}|\,dx\,dt,
\end{split}
\end{equation*}
for some constant $C=C(N,p,s,\Lambda,C_1,C_2,C_3,C_4)>0$.\\ 
\textbf{Estimate of $I_1$:} Using \eqref{eqn}, the properties of $\xi_j$ and $r\in(0,1)$, we have
\begin{equation}\label{I1lsc}
\begin{split}
I_1&=C\int_{\Gamma_j}\int_{B_j}\int_{B_j}{\max\{(u-k_j)_{-}(x,t),(u-k_j)_{-}(y,t)\}^p|\xi_j(x,t)-\xi_j(y,t)|^p}\,d\mu\,dt\\
&\leq C\frac{2^{jp}}{r^{ps}}L^p|A_j|\leq C\frac{2^{jp}}{r^{p}}L^p|A_j|,
\end{split}
\end{equation}
$C=C(N,p,s,\Lambda,C_1,C_2,C_3,C_4)>0$.\\
\textbf{Estimate of $I_2$:} Again, using \eqref{eqn} and the properties of $\xi_j$, we deduce that
\begin{equation}\label{I2lsc}
\begin{split}
I_2&=C\int_{\Gamma_j}\int_{B_j}(u-k_j)_-^p|\nabla\xi_j|^p\,dx\,dt\leq C\frac{2^{jp}}{r^{p}}L^p|A_j|, 
\end{split}
\end{equation}
$C=C(N,p,s,\Lambda,C_1,C_2,C_3,C_4)>0$.\\
\textbf{Estimate of $I_3$:} Without loss of generality, we assume that $x_0=0$. Then for every $x\in\mathrm{supp}\,\psi_{j}$ and every $y\in\mathbb{R}^N\setminus B_j$, we observe that
\begin{equation}\label{I3lscnl}
\frac{1}{|x-y|}=\frac{1}{|y|}\frac{|x-(x-y)|}{|x-y|}\leq\frac{1}{|y|}(1+2^{j+3})\leq\frac{2^{j+4}}{|y|}.
\end{equation}
Using \eqref{eqn}, $r\in(0,1)$ and $0\leq\xi_j\leq 1$, we have
\begin{equation}\label{I3lsc}
\begin{split}
I_3&=C\esssup_{(x,t)\in\mathrm{supp}\,\psi_j,\,t\in\Gamma_j}\int_{{\mathbb{R}^N\setminus B_j}}{\frac{(u-k_j)_{-}(y,t)^{p-1}}{|x-y|^{N+ps}}}\,dy
\int_{\Gamma_j}\int_{B_j}(u-k_j)_{-}\xi_{j}^p\,dx\,dt\\
&\leq C\frac{2^{j(N+p)}}{r^{p}}L^p|A_j|,
\end{split}
\end{equation}
for some constant $C=C(N,p,s,\Lambda,C_1,C_2,C_3,C_4)>0$.\\
\textbf{Estimate of $I_4$:} Using \eqref{eqn} along with the properties of $\xi_j$, we have
\begin{equation}\label{I4lsc}
\begin{split}
I_4&=C\int_{\Gamma_j}\int_{B_j}(u-k_j)_{-}^{2}|\partial_t\xi_j^p|\,dx\,dt\\
&\leq C\frac{2^{jp}}{\theta r^{p}}\int_{\Gamma_j}\int_{B_j}(u-k_j)_{-}^2\,dx\, dt\leq C\frac{2^{jp}}{\theta r^{p}} L^{2}|A_j|,
\end{split}
\end{equation}
for some constant $C=C(N,p,s,\Lambda,C_1,C_2,C_3,C_4)>0$. 
Inserting \eqref{I1lsc}, \eqref{I2lsc}, \eqref{I3lsc} and \eqref{I4lsc} in \eqref{lscengeqn1}, we arrive at
\begin{equation}\label{KIlsc1}
\begin{split}
I+\hat{K}\leq C\frac{2^{j(N+p)}}{r^p}L^p\Big(1+\frac{L^{2-p}}{\theta}\Big)|A_j|,
\end{split}
\end{equation}
for some constant $C=C(N,p,s,\Lambda,C_1,C_2,C_3,C_4)>0$. Using \eqref{Jlsc1} and \eqref{KIlsc1} in \eqref{Sobolsc}, we obtain
\begin{equation}\label{IJKlsc1}
\begin{split}
|A_{j+1}|\leq C\frac{2^{j((N+p)^2+1)}L^\frac{N+p}{N+2}}{(1-a)M} \Big(1+\frac{L^{2-p}}{\theta}\Big)^\frac{N+p}{p(N+2)}\frac{|A_j|^{1+\frac{1}{N+2}}}{r^\frac{N+p}{N+2}},
\end{split}
\end{equation}
for some constant $C=C(N,p,s,\Lambda,C_1,C_2,C_3,C_4)>0$. Dividing both sides of \eqref{IJKlsc1} by $|\mathcal{Q}_{j+1}|$ and denoting by $Y_j=\frac{|A_j|}{|\mathcal{Q}_j|}$, we get
$$
Y_{j+1}\leq 2K b^{j}Y_j^{1+\delta},
$$
where
$$
K=\frac{C(\theta L^{N+p})^\frac{1}{N+2}}{2(1-a)M} \Big(1+\frac{L^{2-p}}{\theta}\Big)^\frac{N+p}{p(N+2)},
\quad\delta=\frac{1}{N+2}
\quad\text{and}\quad
b=2^{(N+p)^2+1}>1.
$$
We define $\nu=(2K)^{-\frac{1}{\delta}}b^{-\frac{1}{\delta^2}}$, which depends on $a,M,\mu^-,\lambda^-,\theta,N,p,s,\Lambda,C_1,C_2,C_3,C_4$ such that if $Y_0\leq\nu$, then by Lemma \ref{ite}, we have $\lim_{j\to\infty}Y_j=0$. This completes the proof.
\end{proof}
Recalling that $a,\mu^-$ and $M$ are defined as in \eqref{lmu}, we prove our second De Giorgi Lemma. 
\begin{Lemma}\label{DGL2}
Let $1<p<\infty,\,0<s<1$ and $g\equiv 0$ in $\Om_T\times\R$. Suppose that $u\in L^p_{\mathrm{loc}}\big(0,T;W^{1,p}_{\mathrm{loc}}(\Om)\big)\cap C_{\mathrm{loc}}\big(0,T;L^2_{\mathrm{loc}}(\Om)\big)\cap L^\infty_{\mathrm{loc}}\big(0,T;L^{p-1}_{ps}(\mathbb{R}^N)\big)$ is a weak supersolution of \eqref{maineqn} in $\Om_T$ such that $u$ is essentially bounded below in $\R^N\times(0,T)$ and let 
\begin{equation}\label{lmbcc}
\lambda^-\leq\essinf_{\mathbb{R}^N\times(0,T)}\,u.
\end{equation}
Then there exists a constant $\theta=\theta(a,M,\mu^-,\lambda^-,N,p,s,\Lambda,C_1,C_2,C_3,C_4)>0$ such that if $t_0$ is a Lebesgue point of $u$ and
$$
u(\cdot,t_0)\geq \mu^-+M\text{ a.e. in }B_r(x_0),
$$
then 
$$
u\geq\mu^-+aM\text{ a.e. in }\mathcal{Q}^{+}_{\frac{3r}{4},\theta}(x_0,t_0)=B_{\frac{3r}{4}}(x_0)\times\big(t_0,t_0+\theta\big(\tfrac{3r}{4}\big)^p\big).
$$
\end{Lemma}
\begin{proof}
For $j\in\N\cup\{0\}$, we define $k_j,\hat{k}_j,r_j,\hat{r}_j,B_j,\hat{B}_j$ as in \eqref{itelsc} and for $\theta>0$, let us set
\begin{equation}\label{itelsc2}
\begin{split}
\Gamma_j=(t_0,t_0+\theta r_j^{p}),\quad\mathcal{Q}_j=B_j\times\Gamma_j,\quad \mathcal{\hat{Q}}_j=\hat{B}_j\times\Gamma_j,\quad A_j=\mathcal{Q}_j\cap\{u\leq k_j\}.
\end{split}
\end{equation}
Therefore, for all $j\in\mathbb N\cup\{0\}$ we have
$
B_{j+1}\subset\hat{B}_j\subset B_j,\,\Gamma_{j+1}\subset\Gamma_j.
$
Let $\{\Phi_j\}_{j=0}^{\infty}\subset C_c^{\infty}(\hat{\mathcal{Q}}_j)$ be as defined in \eqref{Philsc}. Notice that, over the set $A_{j+1}=\mathcal{Q}_{j+1}\cap\{u\leq k_{j+1}\}$, we have $\hat{k}_j-k_{j+1}\leq \hat{k}_j-u$. Hence integrating over the set $A_{j+1}$ as in the proof of \eqref{Sobolsc}, we obtain
\begin{equation}\label{Sobolsc2}
\begin{split}
(1-a)\frac{M}{2^{j+3}}|A_{j+1}|&\leq C(I+J)^\frac{N}{p(N+2)}\hat{K}^\frac{1}{N+2}|A_j|^{1-\frac{N}{p(N+2)}},
\end{split}
\end{equation}
for some constant $C=C(N,p)>0$, where
$$
I=\int_{\hat{\mathcal{Q}}_j}|\nabla (u-\hat{k}_j)_-|^p\,dx\,dt,
\quad 
J=\int_{\hat{\mathcal{Q}}_j}(u-\hat{k}_j)_{-}^{p}|\nabla\Phi_j|^p\,dx\,dt
\quad\text{and}\quad
\hat{K}=\esssup_{t\in\Gamma_j}\int_{\hat{B}_j}(u-\hat{k}_{j})_{-}^2\,dx.
$$
Due to \eqref{lmbcc}, as in \eqref{eqn}, we get $(u-\hat{k}_j)_-\leq (\mu^-+M-\lambda^-)_+:=L$ in $\R^N\times(0,T)$.\\
\textbf{Estimate of $J$:} From the proof of \eqref{Jlsc1}, we have
\begin{equation}\label{Jlsc2}
\begin{split}
J&=\int_{\hat{\mathcal{Q}}_j}(u-\hat{k}_j)_{-}^{p}|\nabla\Phi_j|^p\,dx\,dt\leq C\frac{2^{jp}}{r^p}L^p|A_j|,
\end{split}
\end{equation}
for some constant $C=C(N,p)>0$.\\
\textbf{Estimate of $I$ and $\hat{K}$:} Let $\xi_j(x,t)=\xi_j(x)$ be a time independent smooth function with compact support in $B_j$ such that $0\leq\xi_j\leq 1$, $|\nabla\xi_j|\leq C\frac{2^j}{r}$ in $\mathcal{Q}_j$, $\mathrm{dist}(\mathrm{supp}\,\xi_j,\mathbb{R}^N\setminus B_j)\geq 2^{-j-1}r$ and $\xi_j\equiv 1$ in $\hat{B}_j$ for some constant $C=C(N,p)>0$. Therefore, $\partial_t\xi_j=0$. Also, since $\hat{k}_j<\mu^-+M$, due to the hypothesis $u(\cdot,t_0)\geq\mu^-+M$ a.e. in $B_r(x_0)$, we deduce that $(u-\hat{k}_j)_-(\cdot,t_0)=0$ a.e. in $B_r(x_0)$. Noting these facts along with $g\equiv 0$ in $\Om_T\times\R$ and $(u-k_j)_-\geq (u-\hat{k}_j)_-$, by Lemma \ref{Auxfnlemma}, Remark \ref{eng1rmk2} and Lemma \ref{eng1}, we obtain
\begin{equation}\label{KIlsc2}
\begin{split}
\hat{K}+I&=\esssup_{\Gamma_j}\int_{\hat{B}_j}(u-\hat{k}_j)_-^{2}\,dx+\int_{\Gamma_j}\int_{\hat{B}_j}|\nabla (u-\hat{k}_j)_-|^p\,dx\,dt\leq J_1+J_2+J_3,
\end{split}
\end{equation}
where
\begin{equation*}
\begin{split}
J_1&=C\Bigg(\int_{\Gamma_j}\int_{B_j}\int_{B_j}{\max\{(u-k_j)_{-}(x,t),(u-k_j)_{-}(y,t)\}^p|\xi_j(x,t)-\xi_j(y,t)|^p}\,d\mu\,dt,\\
J_2&=C\int_{\Gamma_j}\int_{B_j}(u-k_j)_-^p|\nabla\xi_j|^p\,dx\,dt
\quad\text{and}\\
J_3&=\esssup_{(x,t)\in\mathrm{supp}\,\xi_j,\,t\in\Gamma_j}\int_{{\mathbb{R}^N\setminus B_j}}{\frac{(u-k_j)_{-}(y,t)^{p-1}}{|x-y|^{N+ps}}}\,dy
\int_{\Gamma_j}\int_{B_j}(u-k_j)_{-}\xi_{j}^p\,dx\,dt,
\end{split}
\end{equation*}
for some positive constant $C=C(N,p,s,\Lambda,C_1,C_2,C_3,C_4)$. From \eqref{I1lsc}, \eqref{I2lsc} and \eqref{I3lsc}, it follows that
\begin{equation}\label{KIlsc223}
\hat{K}+I\leq J_1+J_2+J_3\leq C\frac{2^{j(N+p)}}{r^{p}}L^p|A_j|,
\end{equation}
for some positive constant $C=C(N,p,s,\Lambda,C_1,C_2,C_3,C_4)$. Now employing \eqref{Jlsc2} and \eqref{KIlsc223} in \eqref{Sobolsc2}, by setting $Y_j=\frac{|A_j|}{|\mathcal{Q}_j|}$ we obtain
\begin{equation}\label{itelsc2new}
Y_{j+1}\leq C\frac{(\theta L^{N+p})^\frac{1}{N+2} }{(1-a)M} 2^{j((N+p)^2+1)}Y_j^{1+\frac{1}{N+2}},
\end{equation}
for some positive constant $C=C(N,p,s,\Lambda,C_1,C_2,C_3,C_4)$. Letting
\[
d_0=\frac{CL^\frac{N+p}{N+2}}{(1-a)M},\quad b=2^{(N+p)^2+1},\quad\delta_2=\delta_1=\delta=\frac{1}{N+2}
\quad\text{and}\quad 
K=\frac{d_0\,\theta^\delta}{2}
\]
in Lemma \ref{ite}, we have $\lim_{j\to\infty}Y_j\to0$, if
$Y_0\leq\nu=(2K)^{-\frac{1}{\delta}}b^{-\frac{1}{\delta^2}}$. 
Let $\beta\in(0,1)$, then choosing $\theta=\beta\,d_0^{-\frac{1}{\delta}}\,b^{-\frac{1}{\delta^2}}$, which depends on $a,M,\mu^-,\lambda^-,N,p,s,\Lambda,C_1,C_2,C_3,C_4$, we get $\nu=\beta^{-1}>1$. 
Hence the fact  that $Y_0\leq 1$ and thus Lemma \ref{ite} imply that
$\lim_{j\to\infty}Y_j\to 0$.
Therefore, we have 
\[
u\geq\mu^-+aM
\text{ a.e. in }
\mathcal{Q}_{\frac{3r}{4},\theta}^{+}(x_0,t_0).
\]
Hence the result follows.
\end{proof}

\subsection{De Giorgi Lemmas for weak subsolutions} In this subsection, we prove De Giorgi lemmas for weak subsolutions of \eqref{maineqn}. To this end, we assume that $(x_0,t_0)\in\Om_T$, $r\in(0,1)$ and $\theta>0$ such that $\mathcal{Q}_{r,\theta}(x_0,t_0)=B_r(x_0)\times(t_0-\theta r^p,t_0+\theta r^p)\Subset\Om_T$. Let $a,\mu^+$ and $M$ be defined as in \eqref{umu}. The following De Giorgi type lemma shows that a weak subsolution satisfies the property $(\mathcal E)$ in Definition \ref{propertyE}.
\begin{Lemma}\label{DGLi}
Let $1<p<\infty,\,0<s<1$ and $g\equiv 0$ in $\Om_T\times\R$. Suppose that $u\in L^p_{\mathrm{loc}}\big(0,T;W^{1,p}_{\mathrm{loc}}(\Om)\big)\cap C_{\mathrm{loc}}\big(0,T;L^2_{\mathrm{loc}}(\Om)\big)\cap L^\infty_{\mathrm{loc}}\big(0,T;L^{p-1}_{ps}(\mathbb{R}^N)\big)$ is a weak subsolution of \eqref{maineqn} in $\Om_T$ such that $u$ is essentially bounded above in $\R^N\times(0,T)$ and let
\begin{equation}\label{umbc}
\essinf_{\mathbb{R}^N\times(0,T)}\,u\leq\lambda^+.
\end{equation} 
Then there exists a constant $\nu=\nu(a,M,\mu^+,\lambda^+,\theta,N,p,s,\Lambda,C_1,C_2,C_3,C_4)\in(0,1)$, such that if 
$$
|\{u\geq\mu^+-M\}\cap \mathcal{Q}_{r,\theta}(x_0,t_0)|\leq\nu|\mathcal{Q}_{r,\theta}(x_0,t_0)|,
$$
then
$$
u\leq\mu^+-aM\text{ a.e. in } \mathcal{Q}_{\frac{3r}{4},\theta}(x_0,t_0).
$$
\end{Lemma}
\begin{proof}
For $j\in\mathbb N\cup\{0\}$, let
\begin{equation}\label{iteusc}
\begin{split}
&k_j=\mu^+-aM-\frac{(1-a)M}{2^j}
\end{split}
\end{equation}
 and $\hat{k}_j,r_j,\hat{r}_j,B_j,\hat{B}_j,\Gamma_j,\hat{\Gamma}_j,\mathcal{Q}_j,\mathcal{\hat{Q}}_j$ be as defined in \eqref{itelsc}. Here we define by $A_j=\mathcal{Q}_j\cap\{u\geq k_j\}.$ Notice that $r_{j+1}<\hat{r}_j<r_j$, $k_{j+1}>\hat{k}_j>k_j$ for all $j\in\mathbb N\cup\{0\}$ and therefore, we have
$B_{j+1}\subset\hat{B}_j\subset B_j$ and  $\Gamma_{j+1}\subset\hat{\Gamma}_j\subset\Gamma_j$.
Let $\{\Phi_j\}_{j=0}^{\infty}\subset C_c^{\infty}(\hat{\mathcal{Q}}_j)$ be as defined in \eqref{Philsc}. 
Notice that, over the set $A_{j+1}=\mathcal{Q}_{j+1}\cap\{u\geq k_{j+1}\}$, we have ${k}_{j+1}-\hat{k}_{j}\leq u-\hat{k}_j$. By integrating over the set $A_{j+1}$ and using Lemma \ref{Sobo} (b), following the proof of the estimate in \eqref{Sobolsc}, we obtain
\begin{equation}\label{Sobousc}
\begin{split}
(1-a)\frac{M}{2^{j+3}}|A_{j+1}|
&\leq C(I+J)^\frac{N}{p(N+2)}\hat{K}^\frac{1}{N+2}|A_j|^{1-\frac{N}{p(N+2)}},
\end{split}
\end{equation}
for some constant $C=C(N,p)>0$, where 
$$
I=\int_{\hat{\mathcal{Q}}_j}|\nabla (u-\hat{k}_j)_+|^p\,dx\,dt,
\quad 
J=\int_{\hat{\mathcal{Q}}_j}(u-\hat{k}_j)_{+}^{p}|\nabla\Phi_j|^p\,dx\,dt 
\quad\text{and}\quad
\hat{K}=\esssup_{t\in\hat{\Gamma}_j}\int_{\hat{B}_j}(u-\hat{k}_{j})_+^{2}\,dx.
$$
Note that, by the assumption \eqref{umbc}, we know $\esssup_{\mathbb{R}^N\times(0,T)}\,u\leq\lambda^+$, which gives
\begin{equation}\label{ueqn}
(u-\hat{k}_j)_+\leq(\lambda^+-\mu^++M)_+:=L \quad\text{in}\quad\mathbb{R}^N\times(0,T).
\end{equation}
\textbf{Estimate of $J$:} Using the properties of $\Phi_j$ and \eqref{ueqn}, we get
\begin{equation}\label{Jusc1}
\begin{split}
J&=\int_{\hat{\mathcal{Q}}_j}(u-\hat{k}_j)_{+}^{p}|\nabla\Phi_j|^p\,dx\,dt\leq C\frac{2^{jp}}{r^p}L^p|A_j|,
\end{split}
\end{equation}
for some constant $C=C(N,p)>0$.\\
\textbf{Estimate of $I$ and $\hat{K}$:} Let $\xi_j=\psi_j\eta_j$, where $\{\psi_j\}_{j=0}^{\infty}\subset C_{c}^\infty(B_j)$ and $\{\eta_j\}_{j=0}^{\infty}\subset C_{c}^\infty(\Gamma_j)$ are as defined in \eqref{lscctof}
for some constant $C=C(N,p)>0$. Note that $g\equiv 0$ in $\Om_T\times\R$ and for $\hat{k}_j>k_j$, we have $(u-k_j)_+\geq (u-\hat{k}_j)_+$. Therefore, noting Lemma \ref{Auxfnlemma} and Remark \ref{eng1rmk}, we set $r=r_j$, $\tau_1=t_0-\theta \hat{r}_j^{p}$, $\tau_2=t_0+\theta r_j^{p}$, $\tau=\theta r_j^{p}-\theta \hat{r}_j^{p}$ in Lemma \ref{eng1} to obtain
\begin{equation}\label{uscengeqn1}
\begin{split}
&I+\hat{K}=\int_{\hat{\Gamma}_j}\int_{\hat{B}_j}|\nabla (u-\hat{k}_j)_+|^p\,dx\,dt+\esssup_{\hat{\Gamma}_j}\int_{\hat{B}_j}(u-\hat{k}_j)_+^{2}\,dx\leq I_1+I_2+I_3+I_4,
\end{split}
\end{equation}
where
\begin{equation*}
\begin{split}
I_1&=C\int_{\Gamma_j}\int_{B_j}\int_{B_j}{\max\{(u-k_j)_{+}(x,t),(u-k_j)_{+}(y,t)\}^p|\xi_j(x,t)-\xi_j(y,t)|^p}\,d\mu\,dt,\\
I_2&=C\int_{\Gamma_j}\int_{B_j}(u-k_j)_+^p|\nabla\xi_j|^p\,dx\,dt,\\
I_3&=C\esssup_{x\in\mathrm{supp}\,\psi_j,\,t\in\Gamma_j}\int_{{\mathbb{R}^N\setminus B_j}}{\frac{(u-k_j)_{+}(y,t)^{p-1}}{|x-y|^{N+ps}}}\,dy
\int_{\Gamma_j}\int_{B_j}(u-k_j)_{+}\xi_{j}^p\,dx\,dt
\quad\text{and}\\
I_4&=C\int_{\Gamma_j}\int_{B_j}(u-k_j)_+^{2}|\partial_t\xi_j^{p}|\,dx\,dt,
\end{split}
\end{equation*}
for some constant $C=C(N,p,s,\Lambda,C_1,C_2,C_3,C_4)>0$.\\ 
\textbf{Estimate of $I_1$:} Using \eqref{ueqn}, the properties of $\xi_j$ and $r\in(0,1)$, we have
\begin{equation}\label{I1usc}
\begin{split}
I_1&=C\int_{\Gamma_j}\int_{B_j}\int_{B_j}{\max\{(u-k_j)_{+}(x,t),(u-k_j)_{+}(y,t)\}^p|\xi_j(x,t)-\xi_j(y,t)|^p}\,d\mu\,dt\\
&\leq C\frac{2^{jp}}{r^{ps}}L^p|A_j|\leq C\frac{2^{jp}}{r^{p}}L^p|A_j|,
\end{split}
\end{equation}
$C=C(N,p,s,\Lambda,C_1,C_2,C_3,C_4)>0$.\\
\textbf{Estimate of $I_2$:} Again, using \eqref{ueqn} and the properties of $\xi_j$, we get
\begin{equation}\label{I2usc}
\begin{split}
I_2&=C\int_{\Gamma_j}\int_{B_j}(u-k_j)_+^p|\nabla\xi_j|^p\,dx\,dt\leq C\frac{2^{jp}}{r^{p}}L^p|A_j|, 
\end{split}
\end{equation}
$C=C(N,p,s,\Lambda,C_1,C_2,C_3,C_4)>0$.\\
\textbf{Estimate of $I_3$:}
Using \eqref{ueqn} and proceeding similarly as in the proof of the estimate \eqref{I3lsc}, we obtain
\begin{equation}\label{I3usc}
\begin{split}
I_3&=C\esssup_{(x,t)\in\mathrm{supp}\,\psi_j,\,t\in\Gamma_j}\int_{{\mathbb{R}^N\setminus B_j}}{\frac{(u-k_j)_{+}(y,t)^{p-1}}{|x-y|^{N+ps}}}\,dy
\int_{\Gamma_j}\int_{B_j}(u-k_j)_{+}\xi_{j}^p\,dx\,dt\\
&\leq C\frac{2^{j(N+p)}}{r^{p}}L^p|A_j|,
\end{split}
\end{equation}
for some constant $C=C(N,p,s,\Lambda,C_1,C_2,C_3,C_4)>0$.\\
\textbf{Estimate of $I_4$:} Using \eqref{ueqn} along with the properties of $\xi_j$, we have
\begin{equation}\label{I4usc}
\begin{split}
I_4&=C\int_{\Gamma_j}\int_{B_j}(u-k_j)_{+}^{2}|\partial_t\xi_j^p|\,dx\,dt\\
&\leq C\frac{2^{jp}}{\theta r^{p}}\int_{\Gamma_j}\int_{B_j}(u-k_j)_{+}^2\,dx\, dt\leq C\frac{2^{jp}}{\theta r^{p}} L^{2}|A_j|,
\end{split}
\end{equation}
for some constant $C=C(N,p,s,\Lambda,C_1,C_2,C_3,C_4)>0$. 
Inserting \eqref{I1usc}, \eqref{I2usc}, \eqref{I3usc} and \eqref{I4usc} in \eqref{uscengeqn1}, we arrive at
\begin{equation}\label{KIusc1}
\begin{split}
I+\hat{K}\leq C\frac{2^{j(N+p)}}{r^p}L^p\Big(1+\frac{L^{2-p}}{\theta}\Big)|A_j|,
\end{split}
\end{equation}
for some constant $C=C(N,p,s,\Lambda,C_1,C_2,C_3,C_4)>0$. Using \eqref{Jusc1} and \eqref{KIusc1} in \eqref{Sobousc}, we obtain
\begin{equation}\label{IJKusc1}
\begin{split}
|A_{j+1}|\leq C\frac{2^{j((N+p)^2+1)}L^\frac{N+p}{N+2}}{(1-a)M} \Big(1+\frac{L^{2-p}}{\theta}\Big)^\frac{N+p}{p(N+2)}\frac{|A_j|^{1+\frac{1}{N+2}}}{r^\frac{N+p}{N+2}},
\end{split}
\end{equation}
for some constant $C=C(N,p,s,\Lambda,C_1,C_2,C_3,C_4)>0$. Dividing both sides of \eqref{IJKusc1} by $|\mathcal{Q}_{j+1}|$ and denoting by $Y_j=\frac{|A_j|}{|\mathcal{Q}_j|}$, we get
$$
Y_{j+1}\leq 2K b^{j}Y_j^{1+\delta},
$$
where
$$
K=\frac{C(\theta L^{N+p})^\frac{1}{N+2}}{2(1-a)M} \Big(1+\frac{L^{2-p}}{\theta}\Big)^\frac{N+p}{p(N+2)},
\quad\delta=\frac{1}{N+2}
\quad\text{and}\quad
b=2^{(N+p)^2+1}>1.
$$
We define $\nu=(2K)^{-\frac{1}{\delta}}b^{-\frac{1}{\delta^2}}$, which depends on $a,M,\mu^+,\lambda^+,\theta,N,p,s,\Lambda,C_1,C_2,C_3,C_4$ such that if $Y_0\leq\nu$, then by Lemma \ref{ite}, we have $\lim_{j\to\infty}Y_j=0$. This completes the proof.
\end{proof}
Recalling that $a,\mu^+$ and $M$ are defined as in \eqref{umu}, we prove our final De Giorgi Lemma. 
\begin{Lemma}\label{DGLii}
Let $1<p<\infty,\,0<s<1$ and $g\equiv 0$ in $\Om_T\times\R$. Suppose that $u\in L^p_{\mathrm{loc}}\big(0,T;W^{1,p}_{\mathrm{loc}}(\Om)\big)\cap C_{\mathrm{loc}}\big(0,T;L^2_{\mathrm{loc}}(\Om)\big)\cap L^\infty_{\mathrm{loc}}\big(0,T;L^{p-1}_{ps}(\mathbb{R}^N)\big)$ is a weak subsolution of \eqref{maineqn} in $\Om_T$ such that $u$ is essentially bounded above in $\R^N\times(0,T)$ and let 
\begin{equation}\label{umbcc}
\esssup_{\mathbb{R}^N\times(0,T)}\,u\leq\lambda^+.
\end{equation}
Then there exists a constant $\theta=\theta(a,M,\mu^+,\lambda^+,N,p,s,\Lambda,C_1,C_2,C_3,C_4)>0$ such that if $t_0$ is a Lebesgue point of $u$ and
$$
u(\cdot,t_0)\leq \mu^+-M\text{ a.e. in }B_r(x_0),
$$
then 
$$
u\leq\mu^+-aM\text{ a.e. in }\mathcal{Q}^{+}_{\frac{3r}{4},\theta}(x_0,t_0)=B_{\frac{3r}{4}}(x_0)\times\big(t_0,t_0+\theta\big(\tfrac{3r}{4}\big)^p\big).
$$
\end{Lemma}
\begin{proof}
For $j\in\N\cup\{0\}$, let $k_j$ be as in \eqref{iteusc} and $\hat{k}_j,r_j,\hat{r}_j,B_j,\hat{B}_j$ as in \eqref{itelsc}. For $\theta>0$, let us set
\begin{equation}\label{iteusc2}
\begin{split}
\Gamma_j=(t_0,t_0+\theta r_j^{p}),\quad\mathcal{Q}_j=B_j\times\Gamma_j,\quad \mathcal{\hat{Q}}_j=\hat{B}_j\times\Gamma_j,\quad A_j=\mathcal{Q}_j\cap\{u\geq k_j\}.
\end{split}
\end{equation}
Therefore, for all $j\in\mathbb N\cup\{0\}$ we have
$
B_{j+1}\subset\hat{B}_j\subset B_j,\,\Gamma_{j+1}\subset\Gamma_j.
$
Let $\{\Phi_j\}_{j=0}^{\infty}\subset C_c^{\infty}(\hat{\mathcal{Q}}_j)$ be as defined in \eqref{Philsc}. Notice that, over the set $A_{j+1}=\mathcal{Q}_{j+1}\cap\{u\geq k_{j+1}\}$, we have $k_{j+1}-\hat{k}_j\leq u-\hat{k}_j$. Hence integrating over the set $A_{j+1}$ as in the proof of \eqref{Sobolsc}, we obtain
\begin{equation}\label{Sobousc2}
\begin{split}
(1-a)\frac{M}{2^{j+3}}|A_{j+1}|&\leq C(I+J)^\frac{N}{p(N+2)}\hat{K}^\frac{1}{N+2}|A_j|^{1-\frac{N}{p(N+2)}},
\end{split}
\end{equation}
for some constant $C=C(N,p)>0$, where
$$
I=\int_{\hat{\mathcal{Q}}_j}|\nabla (u-\hat{k}_j)_+|^p\,dx\,dt,
\quad 
J=\int_{\hat{\mathcal{Q}}_j}(u-\hat{k}_j)_{+}^{p}|\nabla\Phi_j|^p\,dx\,dt
\quad\text{and}\quad
\hat{K}=\esssup_{t\in\Gamma_j}\int_{\hat{B}_j}(u-\hat{k}_{j})_{+}^2\,dx.
$$
Using \eqref{umbcc}, we get $(u-\hat{k}_j)_+\leq (\lambda^+-\mu^-+M)_+:=L$ in $\R^N\times(0,T)$.\\
\textbf{Estimate of $J$:} From the proof of \eqref{Jlsc1}, we have
\begin{equation}\label{Jusc2}
\begin{split}
J&=\int_{\hat{\mathcal{Q}}_j}(u-\hat{k}_j)_{+}^{p}|\nabla\Phi_j|^p\,dx\,dt\leq C\frac{2^{jp}}{r^p}L^p|A_j|,
\end{split}
\end{equation}
for some constant $C=C(N,p)>0$.\\
\textbf{Estimate of $I$ and $\hat{K}$:} Let $\xi_j(x,t)=\xi_j(x)$ be a time independent smooth function with compact support in $B_j$ such that $0\leq\xi_j\leq 1$, $|\nabla\xi_j|\leq C\frac{2^j}{r}$ in $\mathcal{Q}_j$, $\mathrm{dist}(\mathrm{supp}\,\xi_j,\mathbb{R}^N\setminus B_j)\geq 2^{-j-1}r$ and $\xi_j\equiv 1$ in $\hat{B}_j$ for some constant $C=C(N,p,s)>0$. Therefore, $\partial_t\xi_j=0$. Also, since $\hat{k}_j>\mu^+-M$, due to the hypothesis $u(\cdot,t_0)\leq\mu^+-M$ a.e. in $B_r(x_0)$, we deduce that $(u-\hat{k}_j)_+(\cdot,t_0)=0$ a.e. in $B_r(x_0)$. Noting these facts along with $g\equiv 0$ in $\Om_T\times\R$ and $(u-k_j)_+\geq (u-\hat{k}_j)_+$, by Lemma \ref{Auxfnlemma}, Remark \ref{eng1rmk} and Lemma \ref{eng1}, we obtain
\begin{equation}\label{KIusc2}
\begin{split}
\hat{K}+I&=\esssup_{\Gamma_j}\int_{\hat{B}_j}(u-\hat{k}_j)_+^{2}\,dx+\int_{\Gamma_j}\int_{\hat{B}_j}|\nabla (u-\hat{k}_j)_+|^p\,dx\,dt\leq J_1+J_2+J_3,
\end{split}
\end{equation}
where
\begin{equation*}
\begin{split}
J_1&=C\int_{\Gamma_j}\int_{B_j}\int_{B_j}{\max\{(u-k_j)_{+}(x,t),(u-k_j)_{+}(y,t)\}^p|\xi_j(x,t)-\xi_j(y,t)|^p}\,d\mu\,dt,\\
J_2&=C\int_{\Gamma_j}\int_{B_j}(u-k_j)_+^p|\nabla\xi_j|^p\,dx\,dt
\quad\text{and}\\
J_3&=\esssup_{(x,t)\in\mathrm{supp}\,\xi_j,\,t\in\Gamma_j}\int_{{\mathbb{R}^N\setminus B_j}}{\frac{(u-k_j)_{+}(y,t)^{p-1}}{|x-y|^{N+ps}}}\,dy
\int_{\Gamma_j}\int_{B_j}(u-k_j)_{+}\xi_{j}^p\,dx\,dt,
\end{split}
\end{equation*}
for some positive constant $C=C(N,p,s,\Lambda,C_1,C_2,C_3,C_4)$. From \eqref{I1usc}, \eqref{I2usc} and \eqref{I3usc}, it follows that
\begin{equation}\label{KIusc223}
\hat{K}+I\leq J_1+J_2+J_3\leq C\frac{2^{j(N+p)}}{r^{p}}L^p|A_j|,
\end{equation}
for some positive constant $C=C(N,p,s,\Lambda,C_1,C_2,C_3,C_4)$. Now employing \eqref{Jusc2} and \eqref{KIusc223} in \eqref{Sobousc2}, by setting $Y_j=\frac{|A_j|}{|\mathcal{Q}_j|}$ we obtain
\begin{equation}\label{iteusc2new}
Y_{j+1}\leq C\frac{(\theta L^{N+p})^\frac{1}{N+2} }{(1-a)M} 2^{j((N+p)^2+1)}Y_j^{1+\frac{1}{N+2}},
\end{equation}
for some positive constant $C=C(N,p,s,\Lambda,C_1,C_2,C_3,C_4)$. Letting
\[
d_0=\frac{CL^\frac{N+p}{N+2}}{(1-a)M},\quad b=2^{(N+p)^2+1},\quad\delta_2=\delta_1=\delta=\frac{1}{N+2}
\quad\text{and}\quad 
K=\frac{d_0\,\theta^\delta}{2}
\]
in Lemma \ref{ite}, we have $\lim_{j\to\infty}Y_j\to0$, if
\[
Y_0\leq\nu=(2K)^{-\frac{1}{\delta}}b^{-\frac{1}{\delta^2}}.
\] 
Let $\beta\in(0,1)$, then choosing $\theta=\beta\,d_0^{-\frac{1}{\delta}}\,b^{-\frac{1}{\delta^2}}$, which depends on $a,M,\mu^+,\lambda^+,N,p,s,\Lambda,C_1,C_2,C_3,C_4$, we get $\nu=\beta^{-1}>1$. 
Hence the fact  that $Y_0\leq 1$ and thus Lemma \ref{ite} imply that
$\lim_{j\to\infty}Y_j\to 0$.
Therefore, we have 
\[
u\leq\mu^+-aM
\text{ a.e. in }
\mathcal{Q}_{\frac{3r}{4},\theta}^{+}(x_0,t_0).
\]
Hence the result follows.
\end{proof}

\end{document}